\newtheorem{lma}{Lemma}[section]
\newaliascnt{thmCt}{lma}
\newtheorem{thm}[thmCt]{Theorem}
\newaliascnt{corCt}{lma}
\newtheorem{cor}[corCt]{Corollary}
\newaliascnt{prpCt}{lma}
\newtheorem{prp}[prpCt]{Proposition}
\theoremstyle{definition}
\newaliascnt{dfnCt}{lma}
\newtheorem{dfn}[dfnCt]{Definition}
\newaliascnt{rmkCt}{lma}
\newtheorem{rmk}[rmkCt]{Remark}
\newaliascnt{exaCt}{lma}
\newtheorem{exa}[exaCt]{Example}
\newaliascnt{qstCt}{lma}
\newtheorem{qst}[qstCt]{Question}
\newaliascnt{pbmCt}{lma}
\newtheorem{pbm}[pbmCt]{Problem}
\newaliascnt{ntnCt}{lma}
\newtheorem{ntn}[ntnCt]{Notation}
\def\today{\number\day\space\ifcase\month\or   January\or February\or
   March\or April\or May\or June\or   July\or August\or September\or
   October\or November\or December\fi\   \number\year}
\newcommand{\ZZ}{{\mathbb{Z}}}
\newcommand{\NN}{{\mathbb{N}}}
\newcommand{\CC}{{\mathbb{C}}}
\newcommand{\RR}{{\mathbb{R}}}
\newcommand{\QQ}{{\mathbb{Q}}}
\newcommand{\id}{{\mathrm{id}}}
\newcommand{\andSep}{\,\,\,\text{ and }\,\,\,}
\DeclareMathOperator{\linSpan}{span}
\DeclareMathOperator{\trace}{trace}
\DeclareMathOperator{\kernel}{ker}
\DeclareMathOperator{\nil}{nil}
\DeclareMathOperator{\rad}{rad}
\DeclareMathOperator{\Idem}{Idem}
\newcommand{\ca}{$C^*$-algebra}
\newcounter{theoremintro}
\newaliascnt{dfnIntroCt}{theoremintro}
\newtheorem{dfnIntro}[dfnIntroCt]{Definition}
\newtheorem{corIntro}[theoremintro]{Corollary}
\newtheorem{qstIntro}[theoremintro]{Question}
\newaliascnt{thmIntroCt}{theoremintro}
\newtheorem{thmIntro}[thmIntroCt]{Theorem}
\title{Zero-product balanced algebras}
\date{\today}
\author[Eusebio Gardella]{Eusebio Gardella}
\address{Eusebio Gardella,
Department of Mathematical Sciences, Chalmers University of
Technology and University of Gothenburg, Gothenburg SE-412 96, Sweden.}
\email{gardella@chalmers.se}
\urladdr{www.math.chalmers.se/~gardella}
\author{Hannes Thiel}
\address{Hannes~Thiel, 
Department of Mathematical Sciences, Chalmers University of Technology and University of
Gothenburg, Gothenburg SE-412 96, Sweden.}
\email{hannes.thiel@chalmers.se}
\urladdr{www.hannesthiel.org}
\thanks{
The first named author was partially supported by the Deutsche Forschungsgemeinschaft (DFG, German Research Foundation) through an Eigene Stelle, and by the Swedish Research Council Grant 2021-04561.
The second named author was partially supported by the ERC Consolidator Grant No.~681207 and by the Knut and Alice Wallenberg Foundation (KAW 2021.0140).
}
\subjclass[2010]%
{Primary
15A86, 
47B49; 
Secondary
16N40, 
16S50, 
16U99, 
47B47. 
}
\keywords{zero products, weighted homomorphisms, idempotents, commutators}
\date{\today}
\begin{document}

\begin{abstract}
We say that an algebra is \emph{zero-product balanced} if $ab\otimes c$ and $a\otimes bc$ agree modulo tensors of elements with zero-product.
This is closely related to but more general than the notion of a \emph{zero-product determined} algebra introduced and developed by Bre\v{s}ar, Villena and others.
Every surjective, zero-product preserving map from a zero-product balanced algebra is automatically a weighted epimorphism, and this implies that zero-product balanced algebras are determined by their linear and zero-product structure.
Further, the commutator subspace of a zero-product balanced algebra can be described in terms of square-zero elements.

We show that a commutative, reduced algebra is zero-product balanced if and only if it is generated by idempotents.
It follows that every commutative, zero-product balanced algebra is spanned by nilpotent and idempotent elements.
\end{abstract}

\maketitle

\section{Introduction}

A linear map $\pi \colon A \to B$ between algebras is said to \emph{preserve zero-products} if $ab = 0$ implies $\pi(a)\pi(b) = 0$, for all $a,b \in A$.
The typical example of such a map is a \emph{weighted homomorphism}, namely the composition of an algebra homomorphism $A \to B$ with a centralizer on $B$, and a much studied problem is to determine situations in which zero-product preserving, linear maps are automatically of this form; see, for example, \cite{CheKeLeeWon03PresZeroProd, CheKeLee04MapsCharZeroProd, BreGraOrt09ZPDMatrixAlg, AlaBreExtVil09MapsPresZP, Bre21BookZeroProdDetermined}.

Maps preserving zero-products or more general forms of orthogonality occur naturally in many different settings. For instance,
Banach showed in 1932 that for $p\neq 2$, every linear, isometric map on $L^p([0,1])$ sends functions with disjoint support to functions with disjoint support, and Lamperti showed in 1958 that this characterizes linear isometries on arbitrary $L^p$-spaces.
This was later generalized to maps between other Banach lattices; 
we refer to the monograph \cite{AbrKit00InvDisjPreserving}.
The intimate connection between being isometric and preservation of orthogonality is also shown by the results of Koldobsky \cite{Kol93OpsPresOrthoAreIso} and Blanco-Turn\v{s}ek \cite{BlaTur06PresOrthoNormedSp} that a linear map between normed spaces preserves orthogonality in the Birkhoﬀ–James sense if and only if it is a scalar multiple of a linear isometry.

In the structure theory of \ca{s}, zero-product preserving, positive maps \cite{WinZac09CpOrd0, GarThi22arX:WeightedHomo} are used to model noncommutative partitions of unity, leading to the notion of nuclear dimension \cite{WinZac10NuclDim} -- a noncommutative covering dimension -- which plays a central role in the classification theory of nuclear \ca{s}.

\smallskip

Back to the basic problem:
When is a zero-product preserving, linear map $\pi \colon A \to B$ between algebras automatically a weighted homomorphism?
Since the range of a weighted homomorphism is essentially a subalgebra, it is natural to restrict attention to the case that $\pi$ is \emph{surjective}.

To tackle the basic problem, we focus on a property that lies between preservation of zero-products and being a weighted homomorphism:

\begin{dfnIntro}
\label{dfn:Semimult}
We say that a map $\pi \colon A \to B$ between algebras is \emph{semimultiplicative} if
$\pi(ab)\pi(c) = \pi(a)\pi(bc)$ for all $a,b,c \in A$.
\end{dfnIntro}

This property has implicitly appeared in the literature before, but as far as we know it was never given a name and it has not been systematically studied so far.

It is easy to see that every weighted homomorphism is semimultiplicative, and we show in \autoref{prp:WeightedEpi} that the converse holds under the mild technical assumption that $A$ is idempotent and that $B$ is idempotent and faithful (which includes the case that $A$ is unital, and $B$ is unital, or simple, or a Banach algebra with bounded approximate identity).
It is also easy to see that every surjective, semimultiplicative map from an idempotent algebra to a faithful algebra preserves zero-products.
Thus, the following question arises naturally:

\begin{center}
When is a zero-product preserving map semimultiplicative?
\end{center} 

The main concept of this paper is a property that captures exactly when the above question has a positive answer.
Throughout this paper, $K$ denotes a unital, commutative ring.

\begin{dfnIntro}[\ref{dfn:Balanced}]
We say that a $K$-algebra $A$ is \emph{zero-product balanced} if
\[
ab\otimes c - a\otimes bc \in \linSpan_K \big\{ u\otimes v\in A \otimes_K A : uv=0 \big\}
\]
for all $a,b,c \in A$.
\end{dfnIntro}

Zero-product balancedness is closely related to the concept of a \emph{zero-product determined} algebra introduced by Bre\v{s}ar, Gra\v{s}i\v{c} and Ortega in \cite{BreGraOrt09ZPDMatrixAlg}
It has also been extensively studied in the context of Banach algebras under the name `property~$\mathbb{B}$', \cite{AlaBreExtVil09MapsPresZP}, and under the name `algebraic property~$\mathbb{B}$' in \cite{AlaExtVilBreSpe16CommutatorsSquareZero}.
For details on these concepts, we refer to the recent book by Bre\v{s}ar, \cite{Bre21BookZeroProdDetermined}.

We show that every zero-product balanced algebra is zero-product determined (\autoref{prp:ZPDImplBalanced}), and that the converse holds for algebras admitting a certain factorization (\autoref{prp:MultipleFactorization}), which includes unital algebras as well as Banach algebras with a bounded left approximate identity (\autoref{prp:BanachAlgBalanced}).
However, \autoref{exa:BalancedNotZPD} shows that there are algebras that are zero-product balanced but not zero-product determined, even commutative and 
finite-dimensional ones.

The following result summarizes our findings:

\begin{thmIntro}[\ref{prp:WeightedEpi}, \ref{prp:WeightedFromBalanced}]
Let $\pi \colon A \to B$ be a surjective, linear map between idempotent algebras.
Assume that $B$ is faithful.
Consider the following properties:
\begin{enumerate}
\item
$\pi$ preserves zero-products;
\item
$\pi$ is semimultiplicative;
\item
$\pi$ is a weighted homomorphism.
\end{enumerate}

Then (1)$\Leftarrow$(2)$\Leftrightarrow$(3).
If $A$ is zero-product balanced, then (1)$\Rightarrow$(2) as well, 
and then (1)--(3) are equivalent.
\end{thmIntro}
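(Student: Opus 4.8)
The plan is to handle the three implications separately, disposing of the two ``easy'' directions by direct manipulation of the semimultiplicativity identity and concentrating the real work in $(2)\Rightarrow(3)$, which is the content of \autoref{prp:WeightedEpi}. For $(3)\Rightarrow(2)$, writing $\pi=\theta\circ\varphi$ with $\varphi$ an algebra homomorphism and $\theta$ a centralizer on $B$, both $\pi(ab)\pi(c)$ and $\pi(a)\pi(bc)$ expand, via the centralizer identities $\theta(xy)=\theta(x)y=x\theta(y)$ and multiplicativity of $\varphi$, to the common value obtained by applying $\theta$ twice to $\varphi(a)\varphi(b)\varphi(c)$; this is routine. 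For $(2)\Rightarrow(1)$, suppose $ab=0$ and test $\pi(a)\pi(b)$ against an arbitrary $\pi(e)$ on the left, using idempotence of $A$ to write $e=\sum_j e_j'e_j''$. Applying semimultiplicativity first to $\pi(e_j'e_j'')\pi(a)=\pi(e_j')\pi(e_j''a)$ and then to $\pi(e_j''a)\pi(b)=\pi(e_j'')\pi(ab)$ collapses each summand to $\pi(e_j')\pi(e_j'')\pi(ab)=0$, so $\pi(e)\pi(a)\pi(b)=0$ for all $e$, and faithfulness of $B$ forces $\pi(a)\pi(b)=0$. I defer $(1)\Rightarrow(2)$ to the end, as it is the only place where zero-product balancedness enters.

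For the main construction $(2)\Rightarrow(3)$, since $\pi$ is surjective we have $B=\pi(A)$, and I would first build, for each $a\in A$, operators $\lambda_a,\rho_a\colon B\to B$ by $\lambda_a(\pi(c))=\pi(ac)$ and $\rho_a(\pi(c))=\pi(ca)$. Well-definedness is the first technical point: if $\pi(c)=0$, then semimultiplicativity gives $\pi(d)\pi(ac)=\pi(da)\pi(c)=0$ for every $d$, so $B\cdot\pi(ac)=0$ and faithfulness yields $\pi(ac)=0$, and symmetrically for $\rho_a$. The defining identities then show directly that $\lambda$ is a homomorphism and $\rho$ an anti-homomorphism, that $\lambda_a\rho_{a'}=\rho_{a'}\lambda_a$, and, crucially, that the compatibility $\pi(c)\lambda_a(\pi(d))=\rho_a(\pi(c))\pi(d)$ is \emph{exactly} the semimultiplicativity identity $\pi(c)\pi(ad)=\pi(ca)\pi(d)$. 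Thus each $(\lambda_a,\rho_a)$ is a double centralizer and $a\mapsto(\lambda_a,\rho_a)$ is a homomorphism $\varphi\colon A\to M(B)$.

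The \textbf{main obstacle} is to produce the weight realizing $\pi$ itself. Using $A=A^2$ one has $B=\linSpan_K\{\pi(ac):a,c\in A\}$, and I would define a multiplier $h$ of $B$ by $h(\pi(ac))=\pi(a)\pi(c)$, extended linearly. Proving $h$ well-defined is the heart of the argument: if $\sum_i\pi(a_ic_i)=0$, then testing $\sum_i\pi(a_i)\pi(c_i)$ against $\pi(e)$ on the left and expanding $e=\sum_j e_j'e_j''$ gives, by the same double use of semimultiplicativity as above,
\[
\sum_i\pi(e)\pi(a_i)\pi(c_i)=\sum_j\pi(e_j')\pi(e_j'')\,\pi\big(\textstyle\sum_i a_ic_i\big)=0,
\]
whence faithfulness gives $\sum_i\pi(a_i)\pi(c_i)=0$. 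One then checks that $h$ is a central multiplier and that left multiplication by $\pi(a)$ equals $h\,\lambda_a$ in $M(B)$, since both send $\pi(c)$ to $\pi(a)\pi(c)$. As $B$ embeds faithfully into $M(B)$ by left multiplication, this exhibits $\pi$ as the composition of the homomorphism $\varphi$ with the centralizer $h$, that is, as a weighted homomorphism, giving $(2)\Rightarrow(3)$; the reverse was the easy direction above.

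Finally, for $(1)\Rightarrow(2)$ under the assumption that $A$ is zero-product balanced, I would apply the linear map $A\otimes_K A\to B$ induced by $(u,v)\mapsto\pi(u)\pi(v)$ to the defining relation
\[
ab\otimes c-a\otimes bc\in\linSpan_K\{u\otimes v:uv=0\}.
\]
Each generator $u\otimes v$ on the right has $uv=0$, so its image $\pi(u)\pi(v)$ vanishes because $\pi$ preserves zero-products; hence $\pi(ab)\pi(c)-\pi(a)\pi(bc)=0$, which is precisely semimultiplicativity. Combined with the preceding steps, this renders $(1)$--$(3)$ equivalent. I expect $(2)\Rightarrow(3)$, and specifically the well-definedness of the weight $h$, to be the only genuinely delicate point; everything else reduces to a direct manipulation of the semimultiplicativity identity together with faithfulness of $B$ and idempotence of $A$.
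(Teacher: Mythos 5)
Your three ``easy'' implications are fine and essentially coincide with the paper's arguments: (3)$\Rightarrow$(2) and (2)$\Rightarrow$(1) match the computations in \autoref{prp:WeightedEpi} and \autoref{prp:WeightedFromBalanced} (up to multiplying on the left rather than the right), and your (1)$\Rightarrow$(2) is exactly \autoref{prp:BalancedImplSemimult}. The genuine gap is in (2)$\Rightarrow$(3). A weighted homomorphism in the sense of \autoref{dfn:weighted} requires a \emph{multiplicative} map $\pi_0\colon A\to B$ (landing in $B$, not in $M(B)$) and a centralizer $S$ on $B$ with $\pi=S\circ\pi_0$. What you construct is a homomorphism $\varphi\colon A\to M(B)$, $a\mapsto(\lambda_a,\rho_a)$, and a centralizer $h$ on $B$ with $L_{\pi(a)}=h\,\lambda_a$ in $M(B)$; declaring this ``exhibits $\pi$ as a weighted homomorphism'' skips the step of showing that each $\lambda_a$ is left multiplication by an element of $B$. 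This is not a formality: if $\pi=S\circ\pi_0$ is any weighted factorization, one computes that $h=S$, so $h(\pi(a))\pi(c)=S^3(\pi_0(ac))$ while $\pi(ac)=S(\pi_0(ac))$; for instance, for $\pi_0=\id_B$ and $S$ multiplication by a central invertible $u$ with $u^2\neq 1$, the natural guess $b_a:=h(\pi(a))$ fails, since $h(\pi(a))\pi(c)=u^3ac\neq uac=\pi(ac)$. The only element that works is $b_a=h^{-1}(\pi(a))=\pi_0(a)$, so your route cannot be completed without inverting $h$.

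And the bijectivity of $h$ is precisely what your proposal never establishes. Injectivity of $h$ is the \emph{converse} of the implication you prove for well-definedness: you show that $\sum_i\pi(a_ic_i)=0$ forces $\sum_i\pi(a_i)\pi(c_i)=0$, but not the reverse implication, which is the other half of Claim (4.1) in \autoref{prp:ConsequenceSemimult} (proved there by multiplying $\sum_i\pi(a_i)\pi(c_i)\pi(xy)=\sum_i\pi(a_ic_i)\pi(x)\pi(y)$ against arbitrary $x,y$ and using faithfulness of $B$ twice). Surjectivity of $h$ needs $B=\linSpan_K\pi(A)\pi(A)$, i.e.\ idempotency of $B$ --- a hypothesis of the theorem that your argument never invokes, which is a telltale sign of the omission. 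Even granted bijectivity, one must still verify that $\pi_0:=h^{-1}\circ\pi$ is multiplicative, which is the core computation of \autoref{prp:ConsequenceSemimult} and is absent from your outline. This explains the paper's choice of direction: it defines the map $T=h^{-1}$, $T\big(\sum_j\pi(a_j)\pi(b_j)\big)=\sum_j\pi(a_jb_j)$, rather than the weight $h$ itself, because $T$ is the map from which the homomorphism $\pi_0=T\circ\pi$ is actually built; defining only $h$, as you do, leaves the homomorphism out of reach. Your multiplier-algebra detour is salvageable, but only after adding the missing half of the claim, the surjectivity of $h$, and the multiplicativity of $h^{-1}\circ\pi$ --- at which point it reduces to the paper's proof.
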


An important consequence is that zero-product balanced algebras are determined by their linear and zero-product structure:

\begin{corIntro}[\ref{prp:IsoDetByZPStructure}]
Let $A$ be a zero-product balanced, idempotent algebra, and let~$B$ be a faithful, idempotent algebra.
Then $A$ and $B$ are isomorphic if and only if they admit a bijective, zero-product preserving, linear map $A \to B$. 
\end{corIntro}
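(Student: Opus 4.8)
The forward implication is immediate and I would dispose of it first: an algebra isomorphism $\varphi \colon A \to B$ is in particular a bijective linear map, and it preserves zero-products because $ab = 0$ forces $\varphi(a)\varphi(b) = \varphi(ab) = 0$.

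For the converse I am given a bijective, linear, zero-product preserving map $\pi \colon A \to B$, and my plan is to extract an honest algebra isomorphism from $\pi$ by stripping away its weight. Since $A$ is zero-product balanced, $B$ is faithful, and both are idempotent, \autoref{prp:WeightedFromBalanced} together with \autoref{prp:WeightedEpi} tells me that $\pi$ is a weighted homomorphism; I write $\pi = h \circ \varphi$, where $\varphi \colon A \to B$ is an algebra homomorphism and $h$ is a centralizer on $B$, satisfying $h(xy) = h(x)y = x\,h(y)$. I claim $\varphi$ is the isomorphism I am after. Its injectivity is free: if $\varphi(a) = 0$ then $\pi(a) = h(\varphi(a)) = 0$, so $a = 0$ by injectivity of $\pi$. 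Surjectivity of $\varphi$ is where the work lies, and the route I would take is to show that the weight $h$ acts bijectively on $B$, so that it can be cancelled.

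Set $R = \varphi(A)$, a subalgebra of $B$ with $R = R^2$ since $A = A^2$, and note that surjectivity of $\pi$ gives $h(R) = B$. Using the centralizer identities together with the idempotence $B = B^2$, I would compute $B = B^2 = h(R)\,h(R) = h\big(h(R^2)\big) = h\big(h(R)\big) = h(B)$, so $h$ is surjective on $B$. For injectivity of $h$ on $B$, suppose $b \in B$ with $h(b) = 0$; then for every $r \in R$ the same identities give $b\,h(r) = h(br) = h(b)\,r = 0$, whence $b\,B = b\,h(R) = \{0\}$, and faithfulness of $B$ forces $b = 0$. Finally, since $h \colon B \to B$ is now bijective and $R \subseteq B$, the equalities $h(R) = B = h(B)$ together with injectivity of $h$ yield $R = B$, i.e. $\varphi(A) = B$. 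Thus $\varphi$ is a bijective homomorphism, hence an isomorphism, and $A$ and $B$ are isomorphic.

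The main obstacle is exactly the injectivity of the weight: a priori the injectivity of $\pi$ only controls the composite $h \circ \varphi$ and says nothing directly about $h$, so one cannot simply cancel it. The key insight is that faithfulness of $B$ converts the pointwise vanishing $b\,h(r) = 0$ into the annihilation $b\,B = \{0\}$, and it is precisely this step that bridges the gap and makes the cancellation legitimate. Everything else reduces to the two centralizer identities and the idempotence of $A$ and $B$.
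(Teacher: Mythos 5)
Your proof is correct and follows essentially the same route as the paper: the corollary is an immediate consequence of \autoref{prp:WeightedFromBalanced}, which converts the bijective zero-product preserving map into a weighted homomorphism $\pi = h\circ\varphi$. Note, though, that the bulk of your effort---proving the weight $h$ is bijective and deducing surjectivity of $\varphi$---reproves conclusions that \autoref{prp:WeightedEpi} already supplies (the centralizer in any such factorization is bijective, $\pi_0$ is surjective, and $\pi$ is bijective if and only if $\pi_0$ is), and one can finish even faster from the last clause of \autoref{prp:WeightedFromBalanced}: $A/I\cong B$ where $I=\{a\in A:\pi(a)=0\}=\{0\}$ by injectivity of $\pi$.
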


It is a well-studied problem to determine in which algebras every additive commutator $[a,b] := ab - ba$ can be expressed as a sum of square-zero elements \cite{WanWu91SumsSquareZero, CheLeePuc10CommNilSimpleRings, AlaExtVilBreSpe16CommutatorsSquareZero}.
In \autoref{sec:FN2}, we show that this is always possible in zero-product balanced algebras, and we even give a precise description of the commutator subspace in terms of special square-zero elements.
We define an \emph{orthogonally factorizable square-zero element} as an element $x$ such that there exist $y$ and $z$ with $x = yz$ and $zy = 0$.
We use $FN_2(A)$ to denote the collection of such elements in an algebra $A$;
see \autoref{dfn:FN2}.

\begin{thmIntro}[\ref{prp:BalancedCommutatorFN2}]
Let $A$ be a zero-product balanced, idempotent $K$-algebra.
Then the subspace of $A$ generated by additive commutators agrees with the subspace generated by $FN_2(A)$.
\end{thmIntro}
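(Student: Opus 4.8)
The plan is to prove the two inclusions separately, writing $[A,A]$ for the subspace generated by the additive commutators $[a,b]=ab-ba$ and $\linSpan_K FN_2(A)$ for the subspace generated by the orthogonally factorizable square-zero elements. Only the inclusion $[A,A]\subseteq \linSpan_K FN_2(A)$ should require the hypotheses.

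The inclusion $\linSpan_K FN_2(A)\subseteq [A,A]$ is immediate and uses neither idempotency nor zero-product balancedness: if $x\in FN_2(A)$, say $x=yz$ with $zy=0$, then $x=yz-zy=[y,z]$, so every element of $FN_2(A)$ is itself a commutator.

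For the reverse inclusion the key idea is to feed the defining tensor identity of zero-product balancedness through the commutator map. I would consider the $K$-linear map $\phi\colon A\otimes_K A\to A$ induced by the $K$-bilinear map $(u,v)\mapsto [u,v]=uv-vu$. The crucial observation is that $\phi$ carries zero-product tensors into $\linSpan_K FN_2(A)$: if $uv=0$, then $\phi(u\otimes v)=uv-vu=-vu$, and the factorization $vu=v\cdot u$ together with $uv=0$ exhibits $vu$ as an orthogonally factorizable square-zero element, so $\phi(u\otimes v)\in \linSpan_K FN_2(A)$. Applying $\phi$ to the membership
\[
ab\otimes c - a\otimes bc \in \linSpan_K\{u\otimes v : uv=0\}
\]
guaranteed by zero-product balancedness therefore yields $[ab,c]-[a,bc]\in \linSpan_K FN_2(A)$ for all $a,b,c\in A$. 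A direct computation simplifies the left-hand side, since
\[
[ab,c]-[a,bc]=(abc-cab)-(abc-bca)=bca-cab=[b,ca],
\]
so that $[b,ca]\in \linSpan_K FN_2(A)$ for all $a,b,c\in A$.

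Finally I would invoke idempotency to promote this to arbitrary commutators. Since $A=A^2$, every $x\in A$ can be written as a finite sum $x=\sum_k c_k a_k$, whence $[b,x]=\sum_k [b,c_k a_k]\in \linSpan_K FN_2(A)$; as $[A,A]$ is spanned by the elements $[b,x]$, this gives $[A,A]\subseteq \linSpan_K FN_2(A)$ and completes the proof. I expect the argument to be fairly smooth once the commutator map is singled out as the right test map; the one step deserving care is checking that $\phi$ lands zero-product tensors inside $FN_2(A)$ rather than merely among square-zero elements, which is precisely where the orthogonal factorizability built into the definition of $FN_2(A)$ is needed. The two hypotheses enter exactly once each: zero-product balancedness supplies the tensor identity pushed through $\phi$, and idempotency upgrades $[b,ca]$ to all commutators $[b,x]$.
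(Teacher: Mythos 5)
Your proof is correct and follows essentially the same approach as the paper: the easy inclusion is proved identically, and the hard inclusion rests on the same key observation (if $uv=0$ then $vu\in FN_2(A)$), the same use of zero-product balancedness, and the same final appeal to idempotency. The only cosmetic difference is that the paper packages the argument as a zero-product preserving bilinear map $(x,y)\mapsto yx+F$ into the quotient $A/F$, where $F=\linSpan_K FN_2(A)$, and invokes \autoref{prp:CharBalanced}, whereas you apply the linearized commutator map directly to the defining tensor relation; these are equivalent formulations of the same mechanism.
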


The last three sections of the paper are devoted to the following question:

\begin{qstIntro}
Which algebras are zero-product balanced?
\end{qstIntro} 

If an algebra is generated by idempotents, then it is zero-product balanced;
see \autoref{prp:IdemGenBalanced}.
In certain situations, the converse also holds:
In \cite[Theorem~3.7]{Bre16FinDimZeroProdDet}, Bre\v{s}ar shows that a unital, finite-dimensionl $K$-algebra over a field $K$ is zero-product balanced (equivalently, zero-product determined) if and only if it is generated by idempotents.
The following result is in the same spirit:

\begin{thmIntro}[\ref{prp:SemiprimeCommutative}]
A commutative $K$-algebra over a field $K$ is reduced and zero-product balanced if and only if it is generated by idempotents.

In this case, the algebra is isomorphic to the algebra of finite-valued, continuous functions $X \to K$ with compact support for some Boolean space $X$.
\end{thmIntro}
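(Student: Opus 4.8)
The plan is to prove the two implications separately and then read off the structural description from Stone duality. For the easy direction, suppose $A$ is generated by idempotents. Balancedness is then immediate from \autoref{prp:IdemGenBalanced}. For reducedness I would use that in a commutative ring the product of two idempotents is again idempotent, so the idempotents are multiplicatively closed and $A = \linSpan_K \Idem(A)$. Any element therefore lies in the span of finitely many idempotents, which generate a finite Boolean algebra, so it can be rewritten as a combination $\sum_i \mu_i f_i$ of pairwise orthogonal idempotents. Since orthogonal nonzero idempotents are linearly independent and $K$ is a field, such an element is nilpotent only if all $\mu_i$ vanish, i.e. only if it is zero.

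For the hard direction, the first step is to observe that quotients of balanced algebras are balanced: the quotient map $A \otimes_K A \to (A/I) \otimes_K (A/I)$ carries the defining span into the corresponding span for $A/I$ and sends $ab \otimes c - a \otimes bc$ to its analogue. The crucial lemma is that a commutative integral domain $D$ that is balanced over $K$ satisfies $D \cong K$: in a domain $uv=0$ forces $u \otimes v = 0$, so the defining span is zero and balancedness degenerates to $ab \otimes c = a \otimes bc$ in $D \otimes_K D$. Taking $c=a$ and using $ab \otimes a = a \otimes ab$ forces $ab \in Ka$, which yields a character $\chi \colon D \to K$ with $ba = \chi(b)a$ for all $a,b$; commutativity then gives $\chi(b)a = \chi(a)b$, and since $\chi \neq 0$ this collapses $D$ to a single line. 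Applying this to $D = A/P$ for every minimal prime $P$ shows that each minimal prime is maximal with residue field $K$; hence $A$ has Krull dimension zero, is von Neumann regular, and embeds as a subalgebra of $K$-valued functions $A \hookrightarrow \prod_{x} K$ indexed by $X_0 = \operatorname{MinSpec}(A)$.

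The main obstacle is to upgrade this to the statement that $A$ is spanned by idempotents, which amounts to showing that each $a \in A$ takes only finitely many values; dimension zero alone does not suffice, since for an infinite field $K^{\NN}$ is reduced and zero-dimensional with residues $K$ yet is not balanced. Here I would use balancedness globally. Given $a$, von Neumann regularity provides a support idempotent $e$ with $ea=a$, and the corner $eA$ is a unital balanced quotient of $A$ in which one derives $a \otimes e - e \otimes a \in \linSpan_K\{u \otimes v \colon uv=0\}$, say equal to a finite sum $\sum_{k=1}^n u_k \otimes v_k$. If $a$ attained distinct values $\lambda_0,\dots,\lambda_m$ at points $x_0,\dots,x_m$ of its support, then evaluating at the pairs $(x_i,x_j)$ gives $\lambda_i - \lambda_j = \sum_k \widehat{u_k}(x_i)\,\widehat{v_k}(x_j)$, where for each $k$ the sets $R_k = \{i \colon \widehat{u_k}(x_i)\neq 0\}$ and $C_k = \{j \colon \widehat{v_k}(x_j)\neq 0\}$ are disjoint because $u_k v_k = 0$. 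As the left-hand side is nonzero off the diagonal, every ordered pair $i \neq j$ lies in some rectangle $R_k \times C_k$; assigning to each index $i$ the pair $(\{k \colon i \in R_k\}, \{k \colon i \in C_k\})$ gives a map into disjoint pairs of subsets of $\{1,\dots,n\}$ that is injective, since each off-diagonal pair is covered while $R_k \cap C_k = \emptyset$. Hence $m+1 \le 3^n$, and as $n$ is fixed $a$ takes finitely many values. Reducedness makes the resulting minimal polynomial squarefree and the residue-field computation makes it split over $K$, so $K[a] \cong K^r$ and $a$ is a $K$-combination of the Lagrange idempotents of $K[a] \subseteq A$; thus $A = \linSpan_K \Idem(A)$ is generated by idempotents.

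Finally, for the structural statement I would invoke Stone duality: the idempotents of $A$ form a generalized Boolean algebra under $e \wedge f = ef$ and $e \vee f = e + f - ef$, whose Stone space is a Boolean space $X$ in which the compact open sets correspond to idempotents. Sending each idempotent to the characteristic function of its compact open set and extending $K$-linearly gives a homomorphism onto the algebra of finite-valued, compactly supported, continuous functions $X \to K$, and reducedness together with the linear independence of orthogonal idempotents shows it is injective, hence an isomorphism. The only genuinely hard input is the finiteness argument of the previous paragraph; the remaining steps are structural bookkeeping.
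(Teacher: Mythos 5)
Your proof follows the same global skeleton as the paper's: the easy direction via \autoref{prp:IdemGenBalanced} plus orthogonalization of idempotents; for the converse, pass to quotients by primes (\autoref{prp:BalancedQuotient}), show that a zero-product balanced commutative domain over $K$ is $K$ itself (this is \autoref{prp:DomainOverField}; your tensor--linear-independence argument is a correct variant of the paper's functional argument), embed $A$ into $\prod_{x\in X}K$, prove that every element is finite-valued, and conclude with Lagrange-type idempotents and Stone duality. The one place where you genuinely diverge is the finiteness step, which is also where the paper puts the real work: the paper proves (\autoref{prp:InfiniteDistinct}, following Bre\v{s}ar) that a subalgebra of $\prod_\NN K$ containing an element with pairwise distinct coordinates cannot be balanced, by extracting an infinite index set and applying a suitable functional to a decomposition of $y^2\otimes y-y\otimes y^2$ in $Z_2$. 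Your replacement --- covering the off-diagonal pairs of support points by ``rectangles'' $R_k\times C_k$ with $R_k\cap C_k=\emptyset$ coming from the relations $u_kv_k=0$, and the injectivity of the labelling $i\mapsto(\{k: i\in R_k\},\{k: i\in C_k\})$ --- is correct, self-contained, and even quantitative (at most $3^n$ distinct values from $n$ tensors). This is a genuinely different key lemma, and arguably more elementary than the paper's.

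There is, however, one step you assert without justification: that $A$ is von Neumann regular once it is known to be reduced with all primes maximal. The fact ``reduced $+$ Krull dimension zero $\Rightarrow$ regular'' is a theorem about \emph{unital} commutative rings (proved via localizations being fields); your $A$ is not assumed unital, and the nonunital case is precisely the paper's emphasis. The claim is true in your situation, but it needs an argument --- for instance: the unitization $A^+=A\oplus K$ is again reduced, and every prime $Q$ of $A^+$ satisfies $A^+/Q\cong K$ (if $Q\supseteq A$ then $Q=A$; if $Q\not\supseteq A$, then $A/(Q\cap A)\cong K$ is a \emph{unital} ideal of the domain $A^+/Q$, hence equals all of it); now apply the unital theorem to $A^+$ and use that ideals of commutative regular rings are regular. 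Simpler still, you can bypass regularity and the support idempotent $e$ altogether: apply balancedness in $A$ itself to get $a^2\otimes a-a\otimes a^2=\sum_{k=1}^n u_k\otimes v_k$ with $u_kv_k=0$; evaluating at a pair $(x_i,x_j)$ of support points gives $\lambda_i\lambda_j(\lambda_i-\lambda_j)\neq 0$ on the left, and your rectangle argument runs verbatim. (This is exactly the element the paper decomposes.) Finally, a minor misstatement: $K^\NN$ for infinite $K$ is \emph{not} zero-dimensional ``with residues $K$'' --- at a non-principal ultrafilter the residue field is the ultrapower $K^\NN/\mathcal{U}$, a proper extension of $K$. It does witness that dimension zero plus reducedness is insufficient, which is all your remark needs, but the stronger phrasing is false.
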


We deduce some structure results in the non-reduced case:
Let $A$ be a commutative, zero-product balanced $K$-algebra over a field $K$.
Then the Jacobson radical agrees with the prime radical (\autoref{prp:Radicals}), every element in $A$ is a sum of a nilpotent element and a linear combination of finitely many pairwise orthogonal idempotents (\autoref{prp:CommutativeSpanned}), and $A$ is a semiregular, clean, exchange ring (\autoref{prp:Semiregular}).
This leads to a dichotomy:
A commutative, zero-product balanced algebra either admits a character (a homomorphism to the base field) or is nilradical;
see \autoref{prp:DichotomyCommutative}.
Using this, we obtain a general dichotomy result for zero-product balanced algebras:

\begin{thmIntro}[\ref{prp:DichotomyGeneral}]
Let $A$ be a zero-product balanced $K$-algebra over a field~$K$.
Then either $A$ has a character, or $A$ is a radical extension over its commutator ideal, that is, for every $a\in A$ exist $m,n\geq 1$ and $r_j,x_j,y_j,s_j\in A$ such that
\[
a^m = \sum_{j=1}^n r_j [x_j,y_j] s_j.
\]
\end{thmIntro}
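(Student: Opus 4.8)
The plan is to reduce the general statement to the commutative dichotomy \autoref{prp:DichotomyCommutative} by passing to the abelianization. Let $I \subseteq A$ denote the commutator ideal, that is, the two-sided ideal generated by all additive commutators $[x,y]$, and let $q \colon A \to A/I$ be the quotient homomorphism. Then $A/I$ is a commutative $K$-algebra over the field $K$, since every commutator vanishes in it. The first thing I would verify is that $A/I$ is again zero-product balanced: given $\bar a, \bar b, \bar c \in A/I$, lift them to $a,b,c \in A$ and apply $q \otimes q \colon A \otimes_K A \to (A/I) \otimes_K (A/I)$ to the defining relation $ab\otimes c - a\otimes bc \in \linSpan_K\{u\otimes v : uv = 0\}$. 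Because $q$ is multiplicative, $q(ab)\otimes q(c) - q(a)\otimes q(bc) = q(a)q(b)\otimes q(c) - q(a)\otimes q(b)q(c)$, and each generator $u\otimes v$ with $uv=0$ maps to $q(u)\otimes q(v)$ with $q(u)q(v) = q(uv) = 0$. Hence the relation descends, and quotients of zero-product balanced algebras are zero-product balanced; this step is routine.

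Next I would apply \autoref{prp:DichotomyCommutative} to $A/I$. In the first case, $A/I$ admits a character $\chi \colon A/I \to K$, and then $\chi \circ q$ is a nonzero homomorphism $A \to K$, so $A$ has a character; this is the first horn of the claimed dichotomy. In the second case, $A/I$ coincides with its nilradical, so every element of $A/I$ is nilpotent. In particular, for each $a \in A$ the image $\bar a$ is nilpotent, which yields some $m \geq 1$ with $a^m \in I$.

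It then remains to convert the membership $a^m \in I$ into the displayed two-sided form $\sum_j r_j[x_j,y_j]s_j$. The subtlety is that, since $A$ need not be unital, the commutator ideal $I$ is spanned not only by elements $r[x,y]s$ but also by the products $r[x,y]$, $[x,y]s$, and the bare commutators $[x,y]$. The fix is to conjugate by $a$: one checks that $a \cdot r[x,y]s \cdot a = (ar)[x,y](sa)$, $a\cdot r[x,y]\cdot a = (ar)[x,y]a$, $a \cdot [x,y]s \cdot a = a[x,y](sa)$, and $a\cdot[x,y]\cdot a = a[x,y]a$ all lie in $\linSpan_K\{r[x,y]s\}$, so that $a I a \subseteq \linSpan_K\{r[x,y]s\}$. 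Consequently $a^{m+2} = a \cdot a^m \cdot a \in a I a$ has exactly the required form, and replacing the exponent $m$ by $m+2$ completes the second horn.

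Overall the argument is a clean reduction, and the only real work lies in the commutative case, which is already available via \autoref{prp:DichotomyCommutative}. The one minor obstacle I anticipate is the bookkeeping of the final paragraph, namely ensuring that a power of $a$ lands in the span of genuinely two-sided products $r[x,y]s$ rather than merely in the commutator ideal; the conjugation trick resolves this at the harmless cost of raising the exponent by two.
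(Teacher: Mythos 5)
Your proof is correct, and at the top level it follows the same route as the paper: pass to the quotient by the commutator ideal, note that the quotient is commutative and zero-product balanced (\autoref{prp:BalancedQuotient}), apply the commutative dichotomy (\autoref{prp:DichotomyCommutative}), and pull a character back along the quotient map. The genuine difference is in the step you flagged as a subtlety, and your handling of it is in fact more careful than the paper's. The paper defines $I$ outright as $\big\{\sum_j r_j[x_j,y_j]s_j\big\}$, i.e.\ as the span of the two-sided products, and then asserts that $A/I$ is commutative. For nonunital $A$ this is not automatic and can genuinely fail: for the algebra $A$ of strictly upper triangular $3\times 3$ matrices one has $A^3=\{0\}$, so $A$ is zero-product balanced by \autoref{exa:ZeroTripleProduct}, yet $\linSpan_K\{r[x,y]s : r,x,y,s\in A\}\subseteq A^4=\{0\}$ while $A$ is not commutative; so the quotient by the span of two-sided products need not be commutative (the theorem's conclusion still holds there, trivially, but the paper's intermediate claim does not). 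Your version avoids this: you take $I$ to be the honest ideal generated by the commutators, so that $A/I$ is commutative by construction, and you then bridge the gap between ``some power of $a$ lies in $I$'' and the displayed two-sided form via the conjugation trick $a^{m+2}=a\cdot a^m\cdot a\in aIa\subseteq\linSpan_K\{r[x,y]s : r,x,y,s\in A\}$, at the harmless cost of raising the exponent by two. That is exactly the right fix, and what it buys is an argument valid verbatim in the nonunital setting the theorem is stated for. The only ingredient of the paper's proof you omit is the (easy) mutual exclusivity of the two alternatives: a character vanishes on all commutators, hence on $\linSpan_K\{r[x,y]s\}$, so it must vanish on any element having a power in that span. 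Since the statement only asserts the disjunction, your argument proves it in full; the exclusivity is a one-line addendum if you want the dichotomy to be exclusive.
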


\subsection*{Conventions}

All rings are associative, but possibly nonunital and noncom\-mu\-ta\-tive.
Ideal means two-sided ideal.

\subsection*{Acknowledgements}

The authors thank Matej Bre\v{s}ar and Mikhail Chebotar for valuable comments on an earlier version of this paper.
The authors also thank the anonymous referee for their useful feedback.

\section{Zero-product balanced algebras}
\label{sec:Balanced}

In this section, after recalling the definition of a zero-product determined algebra from \cite{BreGraOrt09ZPDMatrixAlg}, we introduce the more general notion of a \emph{zero-product balanced} algebra;
see \autoref{dfn:Balanced}.
Every zero-product determined algebra is also zero-product balanced; 
see \autoref{prp:ZPDImplBalanced}.
We show that the converse holds for algebras admitting a certain factorization (\autoref{prp:MultipleFactorization}), which includes unital algebras as well as Banach algebras with a bounded left approximate identity (\autoref{prp:BanachAlgBalanced}).
In particular, a \ca{} is zero-product determined if and only if it is zero-product balanced.
\autoref{exa:BalancedNotZPD} shows that there exist zero-product balanced algebras that are not zero-product determined.

We also observe that zero-product balancedness passes to (certain) ideals and quotients;
see Propositions~\ref{prp:BalancedIdeal} and~\ref{prp:BalancedQuotient}.
In particular, an algebra is zero-product balanced whenever its multiplier algebra is, and we use this in later sections to generalize some results in the literature to the nonunital setting;
see \autoref{prp:MatrixAlgBalanced} and \autoref{exa:MatrixAlgIso}.

\smallskip
Several results in this section, or versions thereof, have appeared in \cite{Bre21BookZeroProdDetermined}.
In particular, versions of Propositions~\ref{prp:CharZPD} and \ref{prp:CharZPD-Z2} are given in \cite[Proposition~1.3]{Bre21BookZeroProdDetermined}, which also implies \autoref{prp:ZPDImplBalanced}.
Further, in the context of Banach algebras, a version of \autoref{prp:CharBalanced} has appeared as \cite[Proposition~5.2]{Bre21BookZeroProdDetermined}, and versions of 
Propositions~\ref{prp:BalancedIdeal} and~\ref{prp:BalancedQuotient} have appeared as \cite[Theorem~5.8]{Bre21BookZeroProdDetermined}.

\smallskip

Throughout this section, $K$ denotes a commutative, unital ring.
Given a subset~$X$ of a $K$-module $V$, we use
\[
\linSpan_K X := \big\{ \lambda_1 x_1+\ldots+\lambda_n x_n \in V\colon n\geq 1, \lambda_1,\ldots,\lambda_n\in K, x_1,\ldots,x_n\in X \big\}
\]
to denote the $K$-submodule of $V$ generated by $X$.

\begin{dfn}
\label{dfn:ZPD}
Given a $K$-module $V$, a $K$-algebra $A$, and a $K$-bilinear map $\varphi\colon A\times A\to V$, one says that $\varphi$ \emph{preserves zero-products} if for all $a,b\in A$ with $ab=0$, we have $\varphi(a,b)=0$.

Following \cite{BreGraOrt09ZPDMatrixAlg}, we say that $A$ is \emph{zero-product determined} if for every $K$-module $V$ and every $K$-bilinear map $\varphi\colon A\times A\to V$ that preserves zero-products, there exists a $K$-linear map $\Phi\colon \linSpan_K A^2\to V$ such that $\varphi(a,b)=\Phi(ab)$ for all $a,b\in A$, where $\linSpan_K A^2$ denotes the submodule of $A$ generated by $\{ab:a,b\in A\}$.
\end{dfn}

The next result is contained in \cite[Proposition~1.3]{Bre21BookZeroProdDetermined}.
(The assumption that $K$ is a field is not necessary for the equivalence of~(i) and~(ii) of \cite[Proposition~1.3]{Bre21BookZeroProdDetermined}.)

\begin{prp}
\label{prp:CharZPD}
A $K$-algebra $A$ is zero-product determined if and only if for every zero-product preserving $K$-bilinear map $\varphi\colon A\times A\to V$ to some $K$-module $V$, we have $\sum_{j=1}^n\varphi(a_j,b_j)=0$ for all $a_1,\ldots,a_n,b_1,\ldots,b_n\in A$ satisfying $\sum_{j=1}^n a_jb_j=0$.
\end{prp}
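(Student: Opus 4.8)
The plan is to recognize the stated condition as exactly the well-definedness criterion for the factorization map $\Phi$, so that the equivalence reduces to a routine universal-property argument. Throughout, fix a zero-product preserving $K$-bilinear map $\varphi\colon A\times A\to V$.

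For the forward implication, I would assume that $A$ is zero-product determined and pick $\Phi\colon \linSpan_K A^2\to V$ with $\varphi(a,b)=\Phi(ab)$ for all $a,b\in A$. Given $a_1,\ldots,a_n,b_1,\ldots,b_n\in A$ with $\sum_j a_jb_j=0$, the $K$-linearity of $\Phi$ gives $\sum_j\varphi(a_j,b_j)=\sum_j\Phi(a_jb_j)=\Phi\big(\sum_j a_jb_j\big)=\Phi(0)=0$. This direction uses nothing beyond the mere existence of $\Phi$.

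For the converse, I would assume the stated condition and define $\Phi$ on $\linSpan_K A^2$ by $\Phi\big(\sum_j\lambda_j a_jb_j\big):=\sum_j\lambda_j\varphi(a_j,b_j)$. The only thing to verify is that this is well-defined, i.e.\ independent of the chosen representation of an element of $\linSpan_K A^2$ as a $K$-linear combination of products. Suppose $\sum_j\lambda_j a_jb_j=\sum_k\mu_k c_kd_k$. Using bilinearity to absorb the scalars (replacing $a_j$ by $\lambda_j a_j$ and $c_k$ by $-\mu_k c_k$), I obtain a single concatenated list of pairs whose products sum to zero, namely $\sum_j(\lambda_j a_j)b_j+\sum_k(-\mu_k c_k)d_k=0$. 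Applying the hypothesis to this list yields $\sum_j\varphi(\lambda_j a_j,b_j)+\sum_k\varphi(-\mu_k c_k,d_k)=0$, which after again invoking bilinearity is precisely $\sum_j\lambda_j\varphi(a_j,b_j)=\sum_k\mu_k\varphi(c_k,d_k)$. Hence $\Phi$ is well-defined; it is $K$-linear by construction and satisfies $\varphi(a,b)=\Phi(ab)$. Since $\varphi$ was an arbitrary zero-product preserving bilinear map, $A$ is zero-product determined.

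I do not expect a genuine obstacle here: the entire content is the observation that the stated sum-condition is literally the well-definedness (kernel) condition one needs in order to push a bilinear map down to a linear map on $\linSpan_K A^2$. The one point requiring a moment's care is the bookkeeping with the scalars $\lambda_j\in K$, and this is handled automatically by the scalar-free formulation of the hypothesis together with the bilinearity of $\varphi$.
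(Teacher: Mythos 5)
Your proof is correct. Note that the paper itself does not prove this proposition at all: it simply cites Proposition~1.3 of Bre\v{s}ar's book \cite{Bre21BookZeroProdDetermined} (remarking only that the field assumption there is unnecessary). Your argument is the natural one that the cited result rests on --- the stated sum-condition is exactly the well-definedness criterion for factoring $\varphi$ through the multiplication map into a $K$-linear map on $\linSpan_K A^2$ --- and your handling of the scalars via bilinearity (absorbing $\lambda_j$ into $a_j$ and concatenating the two representations) is exactly the right bookkeeping, so there is nothing to correct.
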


We will repeatedly use the following notation.

\begin{ntn}
\label{ntn:Z2A}
Let $A$ be a $K$-algebra. 
We set 
\[
Z_2(A) = \linSpan_K \big\{ u\otimes v\in A\otimes_K A : uv=0 \big\}.
\]
\end{ntn}

\begin{prp}
\label{prp:CharZPD-Z2}
Let $A$ be a $K$-algebra, and let $m \colon A \otimes_K A \to A$ be the linear map satisfying $m(a \otimes b) = ab$ for $a,b \in A$.
Then $A$ is zero-product determined if and only if $\kernel(m) \subseteq Z_2(A)$.
\end{prp}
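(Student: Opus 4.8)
The plan is to reformulate both sides via the universal property of the tensor product, which converts the bilinear language of zero-product determinacy into a linear statement about submodules of $A \otimes_K A$, and then to combine this with the characterization in \autoref{prp:CharZPD}.

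First I would fix the dictionary. By the universal property of $\otimes_K$, a $K$-bilinear map $\varphi \colon A \times A \to V$ corresponds bijectively to a $K$-linear map $\widetilde\varphi \colon A \otimes_K A \to V$ with $\widetilde\varphi(a \otimes b) = \varphi(a,b)$. Two observations make this useful. First, $\varphi$ preserves zero-products if and only if $\widetilde\varphi$ annihilates every simple tensor $u \otimes v$ with $uv = 0$, which by linearity is equivalent to $Z_2(A) \subseteq \kernel(\widetilde\varphi)$. Second, for elements $a_1, \ldots, a_n, b_1, \ldots, b_n \in A$ one has $m\big(\sum_{j} a_j \otimes b_j\big) = \sum_{j} a_j b_j$ and $\widetilde\varphi\big(\sum_{j} a_j \otimes b_j\big) = \sum_{j} \varphi(a_j, b_j)$, so the implication in \autoref{prp:CharZPD}, namely that $\sum_{j} a_j b_j = 0$ forces $\sum_{j} \varphi(a_j, b_j) = 0$, is precisely the inclusion $\kernel(m) \subseteq \kernel(\widetilde\varphi)$. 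Putting these together, \autoref{prp:CharZPD} says that $A$ is zero-product determined if and only if every $K$-linear map $\psi \colon A \otimes_K A \to V$ satisfying $Z_2(A) \subseteq \kernel(\psi)$ also satisfies $\kernel(m) \subseteq \kernel(\psi)$.

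It then remains to show that this quantified condition is equivalent to the single inclusion $\kernel(m) \subseteq Z_2(A)$. One direction is immediate: if $\kernel(m) \subseteq Z_2(A)$ and $\psi$ is any linear map with $Z_2(A) \subseteq \kernel(\psi)$, then $\kernel(m) \subseteq Z_2(A) \subseteq \kernel(\psi)$. For the converse, the key step is to test the condition on the most economical example, namely the quotient map $q \colon A \otimes_K A \to (A \otimes_K A)/Z_2(A)$ with $V = (A \otimes_K A)/Z_2(A)$. Here $\kernel(q) = Z_2(A)$, so $q$ qualifies, and the hypothesis yields $\kernel(m) \subseteq \kernel(q) = Z_2(A)$, as desired.

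I do not expect a genuine obstacle here: once the bilinear data is transported to $A \otimes_K A$, the argument is formal. The only idea that has to be spotted is that the canonical quotient by $Z_2(A)$ is the universal zero-product preserving map, so testing against it extracts exactly the inclusion $\kernel(m) \subseteq Z_2(A)$, and every other case follows from it. Care is needed only to invoke \autoref{prp:CharZPD} in the form stated, so that one need not separately construct the factoring map $\Phi$ on $\linSpan_K A^2$.
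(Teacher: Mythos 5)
Your proof is correct and takes essentially the same approach as the paper: both reduce the statement to \autoref{prp:CharZPD} and then test against the universal zero-product preserving map into $(A \otimes_K A)/Z_2(A)$, whose linearization is exactly your quotient map $q$. The paper states this in a single sentence, while you spell out the tensor-product dictionary explicitly; the content is identical.
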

\begin{proof}
This follows from \autoref{prp:CharZPD} applied to the universal zero-product preserving, bilinear map $\varphi \colon A \times A \to (A \otimes_K A)/Z_2(A)$ given by
\[
\varphi(a,b) = (a \otimes b) + Z_2(A)
\]
for $a,b \in A$.
\end{proof}

\begin{exa}
\label{exa:ZeroProduct}
Every algebra $A$ with zero-product (that is, $ab = 0$ for all $a,b \in A$) is zero-product determined.
Indeed, in this case we have $\kernel(m) = A \otimes_K A = Z_2(A)$.
\end{exa}

\begin{dfn}
\label{dfn:Balanced}
We say that a $K$-algebra $A$ is \emph{zero-product balanced} if
\[
ab \otimes c - a \otimes bc \in  Z_2(A)
\]
for all $a,b,c \in A$.
\end{dfn}

\begin{exa}
\label{exa:ZeroTripleProduct}
Let $A$ be an algebra such that $A^3=\{0\}$, that is, $abc = 0$ for all $a,b,c \in A$.
Then $A$ is zero-product balanced, since already $ab \otimes c$ and $a \otimes bc$ themselves belong to $Z_2(A)$ for all $a,b,c \in A$.

On the other hand, in \autoref{exa:BalancedNotZPD} we give examples of algebras with $A^3=\{0\}$ such that $A$ is not zero-product determined.
\end{exa}

\begin{prp}
\label{prp:CharBalanced}
Let $A$ be a $K$-algebra.
Then the following are equivalent:
\begin{enumerate}
\item
$A$ is zero-product balanced.
\item
For every $K$-module $V$ and every $K$-bilinear map $\varphi\colon A\times A\to V$ that preserves zero-products we have $\varphi(ab,c)=\varphi(a,bc)$ for all $a,b,c\in A$.
\end{enumerate}
\end{prp}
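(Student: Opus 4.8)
The plan is to reduce both statements to assertions about a single linear map on $A \otimes_K A$, exploiting the universal property of the tensor product in exactly the same manner as the proof of \autoref{prp:CharZPD-Z2}. The basic dictionary is this: every $K$-bilinear map $\varphi \colon A \times A \to V$ factors uniquely through a $K$-linear map $\overline{\varphi} \colon A \otimes_K A \to V$ satisfying $\overline{\varphi}(a \otimes b) = \varphi(a,b)$, and conversely every such linear map arises this way. Under this correspondence, $\varphi$ preserves zero-products if and only if $\overline{\varphi}(u \otimes v) = 0$ whenever $uv = 0$, which (since $Z_2(A)$ is by definition the $K$-span of the elements $u \otimes v$ with $uv = 0$) is equivalent to $Z_2(A) \subseteq \kernel(\overline{\varphi})$. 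Likewise, the conclusion $\varphi(ab,c) = \varphi(a,bc)$ is nothing but $ab \otimes c - a \otimes bc \in \kernel(\overline{\varphi})$. So the whole proposition becomes a statement comparing the subspace $Z_2(A)$ with the elements $ab \otimes c - a \otimes bc$.

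For the implication (1)$\Rightarrow$(2), I would argue directly: if $A$ is zero-product balanced, then $ab \otimes c - a \otimes bc \in Z_2(A)$ for all $a,b,c \in A$ by \autoref{dfn:Balanced}, and for any zero-product preserving $\varphi$ we have $Z_2(A) \subseteq \kernel(\overline{\varphi})$ by the dictionary above. Hence $\overline{\varphi}(ab \otimes c - a \otimes bc) = 0$, which unwinds to $\varphi(ab,c) = \varphi(a,bc)$. This direction is immediate once the translation is in place.

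For the converse (2)$\Rightarrow$(1), the idea is to feed the universal zero-product preserving bilinear map into hypothesis~(2). Concretely, take $V = (A \otimes_K A)/Z_2(A)$ and let $\varphi \colon A \times A \to V$ be $\varphi(a,b) = (a \otimes b) + Z_2(A)$, the same map used in the proof of \autoref{prp:CharZPD-Z2}. This $\varphi$ preserves zero-products, because $uv = 0$ forces $u \otimes v \in Z_2(A)$ and hence $\varphi(u,v) = 0$. Applying~(2) to this $\varphi$ yields $(ab \otimes c) + Z_2(A) = (a \otimes bc) + Z_2(A)$ for all $a,b,c \in A$, which says precisely that $ab \otimes c - a \otimes bc \in Z_2(A)$, i.e.\ that $A$ is zero-product balanced. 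I do not expect a genuine obstacle here: the entire content lies in recognizing that the quotient map onto $(A \otimes_K A)/Z_2(A)$ is universal among zero-product preserving bilinear maps, after which both implications are forced by the universal property of $\otimes_K$.
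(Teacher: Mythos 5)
Your proposal is correct and follows essentially the same route as the paper's proof: the forward direction factors $\varphi$ through $A \otimes_K A$ and uses that the induced linear map kills $Z_2(A)$, and the converse applies hypothesis~(2) to the zero-product preserving bilinear map $A \times A \to (A \otimes_K A)/Z_2(A)$, $(a,b) \mapsto (a \otimes b) + Z_2(A)$. The only cosmetic difference is that you state the tensor-product translation as an explicit up-front dictionary, whereas the paper unwinds it inline within each implication.
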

\begin{proof}
Let us show that~(1) implies~(2).
Assuming~(1), let $\varphi\colon A\times A\to V$ be a bilinear map to a $K$-module $V$ that preserves zero-products.
By the universal property of the tensor product, this induces a linear map $\bar{\varphi}\colon A\otimes_K A\to V$ such that $\varphi(a,b)=\bar{\varphi}(a\otimes b)$ for all $a,b\in A$.
Given $u,v\in A$ with $uv=0$, we have
\[
\bar{\varphi}(u\otimes v)=\varphi(u,v)=0,
\]
and it follows that $\bar{\varphi}$ vanishes on $Z_2(A)$.
Thus, given $a,b,c\in A$, using that $ab\otimes c - a\otimes bc\in Z_2(A)$ at the second step, we get
\[
\varphi(ab,c)
= \bar{\varphi}(ab\otimes c)
= \bar{\varphi}(a\otimes bc)
= \varphi(a,bc).
\]

The situation is shown in the following commutative diagram:

\[
\xymatrix{
A\times A \ar[r] \ar[dr]_{\varphi}
& A\otimes_K A \ar[r] \ar[d]^{\bar{\varphi}}
& (A\otimes_K A) / Z_2(A), \ar@{-->}[dl] \\
& V
}
\]

Conversely, assume (2) and consider the universal bilinear map $\varphi\colon A\times A\to A\otimes_K A$ given by $\varphi(a,b)=a\otimes b$.
Let $(A\otimes_K A)/ Z_2(A)$ be the quotient as $K$-modules, and let $\pi\colon A\otimes_K A \to (A\otimes_K A)/ Z_2(A)$ denote the quotient map.
Then $\pi\circ\varphi$ is a bilinear map that preserves zero-products.
Hence, given $a,b,c\in A$, using the assumption at the second step, we get
\[
\pi(ab\otimes c)
= (\pi\circ\varphi)(ab,c)
= (\pi\circ\varphi)(a,bc)
= \pi(a\otimes bc)
\]
and thus $ab\otimes c - a\otimes bc \in Z_2(A)$, as desired.
\end{proof}

Recall that throughout this section, $K$ denotes a commutative, unital ring.

\begin{prp}
\label{prp:BalancedChangeCoefficients}
Let $L\subseteq K$ be a unital subring (with the same unit), and let $A$ be a $K$-algebra.
Assume that $A$ is zero-product determined (zero-product balanced) as an $L$-algebra.
Then $A$ is zero-product determined (zero-product balanced) as a $K$-algebra.
\end{prp}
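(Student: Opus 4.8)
The plan is to compare the two tensor products $A \otimes_L A$ and $A \otimes_K A$ through the natural surjection that collapses the finer $L$-balancing to the coarser $K$-balancing. Concretely, I would first produce the additive (indeed $L$-linear) map $q \colon A \otimes_L A \to A \otimes_K A$ determined by $q(a \otimes_L b) = a \otimes_K b$; this exists by the universal property of $\otimes_L$ applied to the $K$-bilinear, hence $L$-bilinear, map $(a,b) \mapsto a \otimes_K b$, where $A \otimes_K A$ is viewed as an $L$-module by restriction of scalars. The map $q$ is surjective, since its image is an additive subgroup containing every elementary tensor $a \otimes_K b$, and such tensors generate $A \otimes_K A$. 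Writing $Z_2^L(A)$ and $Z_2^K(A)$ for the subspaces of \autoref{ntn:Z2A} formed over $L$ and over $K$ respectively, the first key observation is that $q(Z_2^L(A)) \subseteq Z_2^K(A)$: each generator $u \otimes_L v$ with $uv = 0$ is sent to the generator $u \otimes_K v$ of $Z_2^K(A)$, and $L$-linear combinations land in the $K$-submodule $Z_2^K(A)$.

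For the zero-product balanced case, the argument is then a direct pushforward. Since $q(ab \otimes_L c - a \otimes_L bc) = ab \otimes_K c - a \otimes_K bc$, applying $q$ to the containment $ab \otimes_L c - a \otimes_L bc \in Z_2^L(A)$, which holds by the $L$-balancedness hypothesis, and invoking $q(Z_2^L(A)) \subseteq Z_2^K(A)$, immediately yields $ab \otimes_K c - a \otimes_K bc \in Z_2^K(A)$ for all $a,b,c \in A$. This is exactly $K$-balancedness.

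For the zero-product determined case, I would instead lift along $q$, using the characterization of \autoref{prp:CharZPD-Z2}: $A$ is zero-product determined over $K$ precisely when $\kernel(m_K) \subseteq Z_2^K(A)$, where $m_K \colon A \otimes_K A \to A$ denotes multiplication. The crucial compatibility is $m_K \circ q = m_L$, since both send $a \otimes b \mapsto ab$. Given $\xi \in \kernel(m_K)$, surjectivity of $q$ provides $\eta \in A \otimes_L A$ with $q(\eta) = \xi$; then $m_L(\eta) = m_K(\xi) = 0$, so $\eta \in \kernel(m_L) \subseteq Z_2^L(A)$ by the $L$-hypothesis, and hence $\xi = q(\eta) \in q(Z_2^L(A)) \subseteq Z_2^K(A)$.

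I expect no serious obstacle here; the only points requiring care are the well-definedness and surjectivity of $q$ and the bookkeeping that keeps $Z_2^L(A)$ and $Z_2^K(A)$ apart. The one structural feature worth flagging is the mild asymmetry between the two cases: balancedness transfers as a direct pushforward of a single element, whereas the determined property requires lifting an arbitrary kernel element of $m_K$ back to $A \otimes_L A$ and therefore genuinely uses the surjectivity of $q$ together with the relation $m_K \circ q = m_L$.
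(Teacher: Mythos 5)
Your proof is correct, and it takes a genuinely different route from the paper's. The paper stays entirely on the side of bilinear maps: by the characterizations in \autoref{prp:CharZPD} and \autoref{prp:CharBalanced}, both properties are statements about zero-product preserving bilinear maps $\varphi \colon A \times A \to V$, and the paper's whole proof is the observation that a $K$-bilinear map into a $K$-module is in particular $L$-bilinear (restriction of scalars on $V$), so the $L$-algebra hypothesis applies to $\varphi$ verbatim and yields the $K$-algebra conclusion --- no tensor products, no surjectivity, no lifting. Your argument is the dual, element-level version of this: you implement the change of base rings as the canonical surjection $q \colon A \otimes_L A \to A \otimes_K A$, push the balancing elements $ab \otimes c - a \otimes bc$ forward for the balanced case, and, for the determined case, use the kernel characterization of \autoref{prp:CharZPD-Z2} (applied over $L$ as well as over $K$, which is legitimate since $L$ is again a commutative unital ring) together with the compatibility $m_K \circ q = m_L$ to lift elements of $\kernel(m_K)$ back to $A \otimes_L A$. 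All your verifications are sound: $q$ is well defined by the universal property, is surjective because elementary tensors already span $A \otimes_K A$ additively, and carries $Z_2^L(A)$ into $Z_2^K(A)$ because the latter is a $K$-submodule and hence closed under scalars from $L \subseteq K$. What each approach buys: the paper's is nearly tautological and shorter, since restriction of scalars does all the work at once; yours is more concrete and makes visible a structural asymmetry that the paper's formulation hides, namely that balancedness transfers by pushing forward single elements, whereas the zero-product determined property genuinely requires the surjectivity of $q$ to pull kernel elements back.
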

\begin{proof}
We use the characterization of zero-product balancedness from \autoref{prp:CharBalanced}.
Assume that $A$ is zero-product balanced as a $L$-algebra.
Let $\varphi\colon A\times A\to V$ be a $K$-bilinear map to some $K$-module $V$.
Considering $V$ as a $L$-module, the map $\varphi$ is $L$-bilinear, and we obtain by assumption that $\varphi(ab,c)=\varphi(a,bc)$ for all $a,b,c\in A$.

The proof for zero-product determination is similar, using \autoref{prp:CharZPD}.
\end{proof}

In particular, if a $K$-algebra $A$ is zero-product balanced as a ring, then it is also zero-product balanced as a $K$-algebra.
Thus, even when considering a Banach algebra or a \ca{}, it is most interesting to determine if it is zero-product balanced as a ring.

\begin{prp}
\label{prp:ZPDImplBalanced}
Every zero-product determined algebra is zero-product balanced.
\end{prp}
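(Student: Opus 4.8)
The plan is to reduce zero-product balancedness to the tensor-kernel characterization of zero-product determination established in \autoref{prp:CharZPD-Z2}. Recall that this result says $A$ is zero-product determined precisely when $\kernel(m) \subseteq Z_2(A)$, where $m \colon A \otimes_K A \to A$ is the multiplication map sending $a \otimes b$ to $ab$. On the other hand, zero-product balancedness is by definition the statement that $ab \otimes c - a \otimes bc \in Z_2(A)$ for all $a,b,c \in A$. So it suffices to show that each such difference already lies in $\kernel(m)$, after which the assumed containment $\kernel(m) \subseteq Z_2(A)$ finishes the argument.

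First I would compute, for arbitrary $a,b,c \in A$, the image of $ab \otimes c - a \otimes bc$ under $m$. Using the definition of $m$ together with associativity of the multiplication in $A$, we obtain
\[
m(ab \otimes c - a \otimes bc) = (ab)c - a(bc) = 0,
\]
so that $ab \otimes c - a \otimes bc \in \kernel(m)$ for all $a,b,c \in A$. This is the only genuine input beyond the definitions.

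Assuming now that $A$ is zero-product determined, \autoref{prp:CharZPD-Z2} yields $\kernel(m) \subseteq Z_2(A)$. Combining this with the previous step gives $ab \otimes c - a \otimes bc \in Z_2(A)$ for all $a,b,c \in A$, which is exactly the defining condition of being zero-product balanced.

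I do not anticipate any real obstacle: the two characterizations have been arranged so that balancedness is literally a special instance of the kernel containment, the associativity identity being the whole content. As an alternative route one could argue at the level of bilinear maps via \autoref{prp:CharBalanced} and \autoref{prp:CharZPD} — any zero-product preserving $\varphi$ factors as $\varphi(a,b) = \Phi(ab)$, whence $\varphi(ab,c) = \Phi(abc) = \varphi(a,bc)$ — but the tensor-kernel formulation is the most transparent and is the one I would present.
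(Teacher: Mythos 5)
Your proof is correct, but your primary route differs from the paper's. You invoke the tensor-kernel characterization \autoref{prp:CharZPD-Z2} and reduce everything to the observation that $m(ab \otimes c - a \otimes bc) = (ab)c - a(bc) = 0$, so the balancedness defect lies in $\kernel(m) \subseteq Z_2(A)$; this yields the defining condition of \autoref{dfn:Balanced} directly. The paper never touches the tensor picture: it verifies characterization (2) of \autoref{prp:CharBalanced} straight from the definition of zero-product determination --- a zero-product preserving bilinear map $\varphi$ factors as $\varphi(a,b) = \Phi(ab)$, whence $\varphi(ab,c) = \Phi(abc) = \varphi(a,bc)$ --- which is exactly the ``alternative route'' you sketch in your final paragraph. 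The two arguments are the same associativity observation in dual clothing: yours buys immediacy with respect to the tensor-product definition of balancedness, at the cost of relying on the universal-map construction hidden inside the proof of \autoref{prp:CharZPD-Z2}, while the paper's stays entirely at the level of bilinear maps and needs only the raw definition of zero-product determination together with \autoref{prp:CharBalanced}.
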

\begin{proof}
Let $A$ be a zero-product determined $K$-algebra.
To verify \autoref{prp:CharBalanced}(2), let $\varphi\colon A\times A\to V$ be a $K$-bilinear map to some $K$-module $V$.
By zero-product determination, there exists a linear map $\Phi\colon\linSpan_K A^2\to V$ such that
$\varphi(a,b)=\Phi(ab)$ for all $a,b\in A$.
It follows that
\[
\varphi(ab,c)=\Phi(abc)=\varphi(a,bc)
\]
for all $a,b,c\in A$.
\end{proof}

If $A$ is unital, then the converse also holds:
If $A$ is zero-product balanced, and $\varphi\colon A\times A\to V$ is a bilinear map, then $\varphi(ab,c)=\varphi(a,bc)$ for all $a,b,c\in A$, which implies that the map $\Phi\colon A\to V$ given by $\Phi(a):=\varphi(a,1)$ satisfies
\[
\varphi(a,b) = \varphi(ab,1) = \Phi(ab)
\]
for all $a,b\in A$. However, and unlike in \cite{Bre16FinDimZeroProdDet}, we will be particularly interested in nonunital algebras. 

The next result shows that the converse to \autoref{prp:ZPDImplBalanced} also holds if $A$ admits a certain factorization (which is automatic in the unital case).

\begin{prp}
\label{prp:MultipleFactorization}
Let $A$ be a $K$-algebra such that for every $n\geq 1$ and $a_1,\ldots,a_n\in A$ there exist $x,y_1,\ldots,y_n\in A$ such that $a_j=xy_j$ for $j=1,\ldots,n$.
Then $A$ is zero-product determined if and only if $A$ is zero-product balanced.
\end{prp}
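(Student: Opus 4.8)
The plan is to prove the equivalence by showing the nontrivial direction: that zero-product balanced plus the factorization hypothesis implies zero-product determined. The reverse implication is already free from \autoref{prp:ZPDImplBalanced}, which holds for every algebra. So I would open with a sentence disposing of that direction, then concentrate all the work on the forward direction using the characterizations in \autoref{prp:CharZPD} and \autoref{prp:CharBalanced}.

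For the substantive direction, I would start from a zero-product preserving $K$-bilinear map $\varphi \colon A \times A \to V$ and aim to verify the criterion of \autoref{prp:CharZPD}: namely that $\sum_{j=1}^n \varphi(a_j, b_j) = 0$ whenever $\sum_{j=1}^n a_j b_j = 0$. Given such elements $a_1, \ldots, a_n, b_1, \ldots, b_n$, the key move is to invoke the factorization hypothesis on the list $a_1, \ldots, a_n$ to obtain a single element $x$ and elements $y_1, \ldots, y_n$ with $a_j = x y_j$ for each $j$. The point of pulling out a \emph{common} left factor $x$ is that it lets me collapse the whole sum onto a single slot. Using that $A$ is zero-product balanced, \autoref{prp:CharBalanced}(2) gives $\varphi(x y_j, b_j) = \varphi(x, y_j b_j)$ for each $j$, so by $K$-bilinearity
\[
\sum_{j=1}^n \varphi(a_j, b_j)
= \sum_{j=1}^n \varphi(x y_j, b_j)
= \sum_{j=1}^n \varphi(x, y_j b_j)
= \varphi\Big(x, \sum_{j=1}^n y_j b_j\Big).
\]

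Now I need to understand the second argument $\sum_{j=1}^n y_j b_j$. The obstacle here is that the hypothesis $\sum_j a_j b_j = 0$ only tells me $\sum_j x y_j b_j = x \big(\sum_j y_j b_j\big) = 0$, which says that $x$ annihilates $w := \sum_j y_j b_j$ on the left, not that $w$ itself is zero. So $\varphi(x, w)$ need not vanish for the naive reason. The resolution I expect is that $xw = 0$ is exactly a zero-product relation, and since $\varphi$ preserves zero-products we get $\varphi(x, w) = 0$ directly. That is, the common factorization converts the hypothesis $\sum_j a_j b_j = 0$ into the single factored identity $x w = 0$, and zero-product preservation of $\varphi$ finishes the job without ever needing to move anything further. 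This is the crux of the argument, and it is why a common left factor (rather than individual factorizations $a_j = x_j y_j$) is precisely what the hypothesis of the proposition supplies.

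The main obstacle, then, is not the computation but recognizing that the factorization hypothesis should be applied to produce a \emph{shared} left factor, allowing the reduction to a single zero-product relation $xw = 0$; once that is seen, both \autoref{prp:CharBalanced}(2) and the zero-product preservation of $\varphi$ apply cleanly. I would close by noting that having verified the criterion of \autoref{prp:CharZPD} for an arbitrary zero-product preserving bilinear map, $A$ is zero-product determined, completing the equivalence.
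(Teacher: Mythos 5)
Your proof is correct and follows essentially the same route as the paper's: factor out a common left factor $x$ via the hypothesis, use zero-product balancedness (via \autoref{prp:CharBalanced}) to move each $y_j$ into the second slot, collapse the sum by bilinearity, and apply zero-product preservation to the relation $x\big(\sum_j y_j b_j\big)=0$. The paper's proof is exactly this argument, with the converse direction likewise delegated to \autoref{prp:ZPDImplBalanced}.
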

\begin{proof}
Assume that $A$ is zero-product balanced, and let $\varphi\colon A\times A\to V$ be a zero-product preserving $K$-bilinear map to some $K$-module $V$.
To apply \autoref{prp:CharZPD}, let $a_j,b_j\in A$ for $j=1,\ldots,n$ such that $\sum_{j=1}^n a_jb_j=0$.
We need to show that $\sum_{j=1}^n \varphi(a_j,b_j)=0$.
By assumption, there exist $x,y_1,\ldots,y_n\in A$ such that $a_j=xy_j$ for $j=1,\ldots,n$.
Then
\[
x\Big(\sum_{j=1}^n y_jb_j\Big) = \sum_{j=1}^n a_jb_j = 0.
\]
Using that $A$ is zero-product balanced at the second step, and using that~$\varphi$ preserves zero-products at the last step, we obtain 
\[
\sum_{j=1}^n \varphi(a_j,b_j)
= \sum_{j=1}^n \varphi(xy_j,b_j)
= \sum_{j=1}^n \varphi(x,y_jb_j)
= \varphi\Big(x,\sum_{j=1}^n y_jb_j\Big)=0,
\]
as desired.
\end{proof}

\begin{cor}
\label{prp:BanachAlgBalanced}
Let $A$ be a Banach algebra with a bounded left approximate identity.
Then $A$ is zero-product determined as a $\CC$-algebra (as a ring) if and only if $A$ is zero-product balanced as a $\CC$-algebra (as a ring).
\end{cor}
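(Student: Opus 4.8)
The plan is to deduce this from \autoref{prp:MultipleFactorization} by verifying that a Banach algebra with a bounded left approximate identity admits the simultaneous factorization property required there: for every $n\geq 1$ and $a_1,\ldots,a_n\in A$ there exist a common left factor $x\in A$ and elements $y_1,\ldots,y_n\in A$ with $a_j=xy_j$ for each $j$. Once this is established, \autoref{prp:MultipleFactorization} applies directly with $K=\CC$ to give the equivalence for $A$ as a $\CC$-algebra, and with $K=\ZZ$ to give the equivalence for $A$ as a ring, since the factorization property is purely algebraic and independent of the coefficient ring. Thus the only real content is the factorization statement; the passage between the ring and the $\CC$-algebra formulations can also be routed through \autoref{prp:BalancedChangeCoefficients}.

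To obtain the simultaneous factorization, I would invoke the module version of the Cohen--Hewitt factorization theorem. Equip the $n$-fold direct sum $A^n$, carrying the maximum norm, with the structure of a left Banach $A$-module via $c\cdot(b_1,\ldots,b_n)=(cb_1,\ldots,cb_n)$; submultiplicativity of the norm is immediate from $\|c\cdot v\|=\max_i\|cb_i\|\leq\|c\|\,\|v\|$. If $(e_\lambda)_\lambda$ is a bounded left approximate identity for $A$, bounded by some $C>0$, then $e_\lambda\cdot v\to v$ for every $v\in A^n$, so $(e_\lambda)_\lambda$ is a bounded approximate identity for the module $A^n$ and $A^n$ is an essential left $A$-module. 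The Cohen--Hewitt factorization theorem then produces $x\in A$ and $v\in A^n$ with $(a_1,\ldots,a_n)=x\cdot v$; writing $v=(y_1,\ldots,y_n)$ yields exactly $a_j=xy_j$ for all $j$.

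The main obstacle is essentially bookkeeping with the one-sidedness: one must check that a bounded \emph{left} approximate identity yields a common factor on the \emph{left}, which is precisely the side demanded by the hypothesis of \autoref{prp:MultipleFactorization}. Verifying that $A^n$ is genuinely essential is then immediate, since $e_\lambda b_i\to b_i$ coordinatewise with a bound $C$ uniform in $\lambda$ and $i$, so the standing hypotheses of the module factorization theorem are met and no analytic subtleties beyond the cited theorem arise.
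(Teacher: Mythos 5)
Your proposal is correct and follows essentially the same route as the paper: both reduce the statement to \autoref{prp:MultipleFactorization} by producing a common left factor $a_j = xy_j$, which the paper obtains by citing the multiple Cohen factorization theorem \cite[Theorem~17.1]{DorWic79FactorizationBanachMod}, and which you instead derive by applying the module form of Cohen--Hewitt factorization to the left Banach $A$-module $A^n$ --- this is just the standard proof of that cited theorem, so the two arguments coincide in substance. Your observation that the factorization hypothesis is purely algebraic, so that \autoref{prp:MultipleFactorization} applies simultaneously for $K=\CC$ and $K=\ZZ$, matches the paper's (implicit) handling of the parenthetical ring version.
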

\begin{proof}
Given $n\geq 1$ and $a_1,\ldots,a_n\in A$, the multiple Cohen factorization theorem (\cite[Theorem~17.1]{DorWic79FactorizationBanachMod}) provides $x,y_1,\ldots,y_n\in A$ such that $a_j=xy_j$ for $j=1,\ldots,n$.
This verifies the assumption of \autoref{prp:MultipleFactorization}, proving the result.
\end{proof}

\begin{rmk}
Using a right-sided version of \autoref{prp:MultipleFactorization} and a right-sided version of the multiple Cohen factorization theorem, we see that \autoref{prp:BanachAlgBalanced} also holds for Banach algebras with a bounded right approximate identity.
\end{rmk}

\begin{prp}
\label{prp:BalancedIdeal}
Let $A$ be a zero-product balanced $K$-algebra, and let $I\subseteq A$ be a (two-sided) ideal.
Assume that $I=\linSpan_K I\cdot A=\linSpan_K A\cdot I$.
(For example, this is the case if $A$ is unital.)
Then $I$ is a zero-product balanced $K$-algebra.
\end{prp}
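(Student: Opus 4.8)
The plan is to verify zero\nobreakdash-product balancedness of $I$ through the bilinear characterization of \autoref{prp:CharBalanced}. Thus it suffices to show that for every $K$\nobreakdash-module $V$ and every $K$\nobreakdash-bilinear, zero\nobreakdash-product preserving map $\psi\colon I\times I\to V$, we have $\psi(ab,c)=\psi(a,bc)$ for all $a,b,c\in I$. The elements $ab$ and $bc$ again lie in $I$ since $I$ is an ideal, so this statement makes sense.

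The key idea is to manufacture from $\psi$ a zero\nobreakdash-product preserving bilinear map on all of $A\times A$ by sandwiching. For fixed $a',c'\in I$, I would define $\varphi_{a',c'}\colon A\times A\to V$ by $\varphi_{a',c'}(p,q)=\psi(a'p,qc')$. Since $I$ is a two\nobreakdash-sided ideal, we have $a'p\in I$ and $qc'\in I$, so this is well defined, and it is $K$\nobreakdash-bilinear because $\psi$ is bilinear and multiplication is bilinear. The crucial point is that $\varphi_{a',c'}$ preserves zero\nobreakdash-products: if $pq=0$, then $(a'p)(qc')=a'(pq)c'=0$, whence $\psi(a'p,qc')=0$. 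Now applying \autoref{prp:CharBalanced}(2) to the zero\nobreakdash-product balanced algebra $A$ and the map $\varphi_{a',c'}$ yields $\varphi_{a',c'}(pq,r)=\varphi_{a',c'}(p,qr)$ for all $p,q,r\in A$, that is,
\[
\psi(a'pq,\,rc')=\psi(a'p,\,qrc') \quad\text{for all } p,q,r\in A,\ a',c'\in I.
\]

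It then remains to unwind this identity using the two factorization hypotheses. Writing $a=\sum_i a_i'p_i$ with $a_i'\in I$ and $p_i\in A$ (using $I=\linSpan_K I\cdot A$) and $c=\sum_j r_jc_j'$ with $r_j\in A$ and $c_j'\in I$ (using $I=\linSpan_K A\cdot I$), I would substitute $a'=a_i'$, $c'=c_j'$, $p=p_i$, $q=b$, and $r=r_j$ into the displayed identity and sum over $i$ and $j$. Bilinearity of $\psi$ collapses the left\nobreakdash-hand side to $\psi\big((\sum_i a_i'p_i)b,\,c\big)=\psi(ab,c)$ and the right\nobreakdash-hand side to $\psi\big(a,\,b(\sum_j r_jc_j')\big)=\psi(a,bc)$, giving the required equality. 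I expect the only genuine obstacle to be the discovery of the sandwiched map $\varphi_{a',c'}$ and the observation $a'(pq)c'=0$ that makes it zero\nobreakdash-product preserving; everything after that is routine bookkeeping with bilinearity, and it is exactly here that both factorization assumptions on $I$ are consumed.
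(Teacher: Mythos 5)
Your proposal is correct and is essentially the paper's own proof: your sandwiched map $\varphi_{a',c'}(p,q)=\psi(a'p,qc')$ is exactly the paper's auxiliary map $\psi_{x,y}(a,b)=\varphi(xa,by)$ (with the names $\varphi$ and $\psi$ swapped), including the same observation that $pq=0$ forces $(a'p)(qc')=0$. The concluding step is also identical: factor the outer arguments via $I=\linSpan_K I\cdot A$ and $I=\linSpan_K A\cdot I$, substitute into the identity supplied by \autoref{prp:CharBalanced}, and collapse the sums by bilinearity.
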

\begin{proof}
The proof is similar to that of \cite[Proposition~2.7]{Bre16FinDimZeroProdDet}.
Let $\varphi\colon I\times I\to V$ be a zero-product preserving $K$-bilinear map to some $K$-module $V$.
We need to show that $\varphi(xy,z)=\varphi(x,yz)$ for all $x,y,z\in I$.

Given $x,y\in I$, consider the map $\psi_{x,y}\colon A\times A\to V$ given by $\psi_{x,y}(a,b):=\varphi(xa,by)$.
Note that $\psi_{x,y}$ is $K$-bilinear and zero-product preserving.
Using that $A$ is zero-product balanced, we obtain
\[
\varphi(xab,cy)
= \psi_{x,y}(ab,c)
= \psi_{x,y}(a,bc)
= \varphi(xa,bcy),
\]
for all $a,b,c\in A$.
Now let $x,y,z\in I$.
Using that $A=\linSpan_K I\cdot A=\linSpan_K A\cdot I$, choose $x_j,z_k\in I$ and $a_j,c_k\in A$ such that
\[
x = \sum_{j=1}^m x_ja_j \andSep
z = \sum_{k=1}^n c_kz_k.
\]
Then
\[
\varphi(xy,z)
= \sum_{j,k} \varphi(x_ja_jy, c_kz_k)
= \sum_{j,k} \varphi(x_ja_j, yc_kz_k)
= \varphi(x,yz),
\]
as desired.
\end{proof}

\begin{prp}
\label{prp:BalancedQuotient}
Let $A$ be a zero-product balanced $K$-algebra, and let $I\subseteq A$ be an ideal.
Then $A/I$ is a zero-product balanced $K$-algebra.
\end{prp}
\begin{proof}
Set $B:=A/I$, and let $\pi\colon A\to B$ be the quotient map.
Let $\varphi\colon B\times B\to V$ be a zero-product preserving $K$-bilinear map to some $K$-module $V$.
We need to show that $\varphi(xy,z)=\varphi(x,yz)$ for all $x,y,z\in B$.

Define $\psi\colon A\times A\to V$ by $\psi(a,b):=\varphi(\pi(a),\pi(b))$ for $a,b\in A$.
If $a,b\in A$ satisfy $ab=0$, then $\pi(a)\pi(b)=\pi(ab)=0$, and then $\psi(a,b)=0$, which shows that~$\psi$ preserves zero-products.
By assumption, we obtain $\psi(ab,c)=\psi(a,bc)$ for all $a,b,c\in A$.

Given $x,y,z\in B$, choose $a,b,c\in A$ with $\pi(a)=x$, $\pi(b)=y$ and $\pi(c)=z$.
Then
\[
\varphi(xy,z)
= \varphi(\pi(ab),\pi(c))
= \psi(ab,c)
= \psi(a,bc)
= \varphi(\pi(a),\pi(bc))
= \varphi(x,yz). \qedhere
\]
\end{proof}

We close this section with some explicit examples of zero-product balanced algebras, including some which are not zero-product determined (\autoref{exa:BalancedNotZPD}).
These examples are closely related to Examples~5.3 and~5.4 in \cite{Bre21BookZeroProdDetermined}, which are in the context of Banach algebras.

\begin{exa}
\label{exa:Nm}
Let $K$ be a field, and let $m \geq 2$. 
Let $N_m$ denote the universal $K$-algebra generated by a nilpotent element of order $m$. Note that $N_m$ is isomorphic to $N_m := xK[x]/\langle x^m \rangle$, and that it can be identified with the subalgebra of the matrix algebra $M_m(K)$ generated by the unilateral shift
\[
\begin{pmatrix}
0 & 1 & 0 & & 0 \\
0 & 0 & 1 & & 0 \\
\vdots & & \ddots & \ddots \\
0 & & \cdots & 0 & 1 \\
0 & & \cdots & 0 & 0
\end{pmatrix}
\]

We claim that $N_2$ and $N_3$ are zero-product determined (and thus also zero-product balanced by \autoref{prp:ZPDImplBalanced}).
Indeed, $N_2$ is isomorphic to $K$ with zero multiplication and therefore is zero-product determined by \autoref{exa:ZeroProduct}.
We have $N_3 = \{ \lambda x + \mu x^2 \colon \lambda, \mu \in K \}$.
Given any $K$-vector space $V$ and a zero-product preserving, bilinear map $\varphi \colon N_3 \times N_3 \to V$, we have
\[
\varphi( \lambda x + \mu x^2, \sigma x + \tau x^2 )
= \varphi(\lambda x, \sigma x)
= \varphi(\lambda\sigma x, x).
\]
With $\Phi \colon \linSpan_K N_3^2 = \{ \mu x^2 : \mu \in K\} \to V$ given by $\Phi(\mu x^2) = \varphi(\mu x, x)$, it follows that $\varphi(a,b)=\Phi(ab)$. 
Thus $N_3$ is zero-product determined. 
\vspace{0.2cm}

On the other hand, we claim that $N_m$ is not zero-product balanced for $m\geq 4$.
To prove this, we will show that $x^2 \otimes x - x \otimes x^2$ does not belong to $Z_2(N_m)$.

First, note that
\[
N_m = \big\{ \lambda_1 x^1 + \ldots + \lambda_{m-1} x^{m-1} : \lambda_1, \ldots, \lambda_{m-1} \in K \big\}.
\]
Given $a = \sum_j \lambda_j x^j \in N_m$, we set
\[
n(a) := \min \{ j : \lambda_j \neq 0 \}.
\]

Given $a,b \in N_m$, we have $ab=0$ if and only if $n(a)+n(b)\geq m$.
Let $\varphi \colon N_m \to K$ be the linear map satisfying $\varphi(x) = 1$ and $\varphi(x^j) = 0$ for $j \geq 2$, and set 
$\psi = \id \otimes \varphi \colon N_m \otimes_K N_m \to N_m \otimes_K K \cong N_m$. Then 
\[
\psi( b \otimes x ) = b \andSep
\psi( b \otimes x^j ) = 0
\]
for $b \in N_m$ and $j \geq 2$.
Given $a \in N_m$, we have $\varphi(a) \neq 0$ if and only if $n(a)=1$. 
Therefore, if $a,b \in N_m$ satisfy $ab=0$, then $\psi( a \otimes b ) \neq 0$ if and only if $n(a)=1$ and $n(b)=m-1$, that is, $b = \mu x^{m-1}$ for some $\mu \in K$, and then $\psi( a \otimes b ) = \varphi(a)\mu x^{m-1}$.

To reach a contradiction, assume that $x^2 \otimes x - x \otimes x^2 = \sum_j a_j \otimes b_j$ for some $a_j,b_j \in N_m$ with $a_j,b_j \in N_m$ and $a_jb_j = 0$.
As noted above, if $n(a_j)=1$, then $b_j = \mu_j x^{m-1}$ for some $\mu_j \in K$.
We have
\[
\psi( x^2 \otimes x - x \otimes x^2 ) = x^2.
\]
On the other hand, using that $m \geq 4$, we have
\[
\psi \big( \sum_j a_j \otimes b_j \big)
= \sum_{n(j)=1} \varphi(a_j)b_j
= \sum_{n(j)=1} \varphi(a_j)\mu_j x^{m-1} \neq x^2
\]
which is the desired contradiction.
\end{exa}

The following is an example of a zero-product balanced algebra which is not zero-product determined. 
By taking $K=\mathbb{R}$ and $D=\mathbb{C}$, one can construct the example to be a $4$-dimensional $\RR$-algebra.

\begin{exa}
\label{exa:BalancedNotZPD}
Let $K$ be a field, set $N_3 := xK[x]/\langle x^3 \rangle$ as in \autoref{exa:Nm}, and let~$D$ be any unital $K$-algebra without zero-divisors.
Set
\[
A := D \otimes_K N_3,
\]
with product induced by $(d \otimes a)(e \otimes b) := de \otimes ab$ for $d,e \in D$ and $a,b \in N_3$.

Note that the product of any three elements in $A$ is zero, whence $A$ is zero-product balanced by \autoref{exa:ZeroTripleProduct}.
Assume that $\dim_K(D) \geq 2$.
We claim that then~$A$ is not zero-product determined.
To see this, choose $d_0 \in D$ which is not a scalar multiple of the identity.
Consider the elements $d_0 \otimes x$ and $1 \otimes x$ in $A$.
Set
\[
t := (d_0 \otimes x) \otimes (1 \otimes x) - (1 \otimes x) \otimes (d_0 \otimes x) \in A \otimes_K A.
\]

Let $m \colon A \otimes_K A \to A$ be induced by the multiplication map $A \times A \to A$.
Then
\[
m(t)
= (d_0 \otimes x)(1 \otimes x) - (1 \otimes x)(d_0 \otimes x)
= (d_01 \otimes x^2) - (1d_0 \otimes x^2)
= 0.
\]
On the other hand, we will show that $t$ does not belong to $Z_2(A)$.
We have
\[
A = \big\{ d \otimes x + e \otimes x^2 : d,e \in D \big\}.
\]
Set $I := \{ e \otimes x^2 : e \in D \} \subseteq A$.
Let $\varphi \colon A \to D$ be the $K$-linear map satisfying $\varphi( d \otimes x ) = d$ and $\varphi( e \otimes x^2 ) = 0$ for $d,e \in D$,
and set $\psi := \varphi \otimes \varphi \colon A \otimes_K A \to D \otimes_K D$.

Given $a = d \otimes x + e \otimes x^2$ and $b = f \otimes x + g \otimes x^2$ in $A$, we have
\[
ab = df \otimes x^2.
\]
Using that $D$ has no zero divisors, it follows that $ab = 0$ if and only if $a \in I$ or $b \in I$.
Since $\varphi(I)=\{0\}$, we see that $\psi( a \otimes b) = 0$ whenever $ab = 0$, and thus $\psi$ vanishes on $Z_2(A)$. 
On the other hand, we have
\[
\psi(t) 
= \varphi( d_0 \otimes x ) \otimes \varphi( 1 \otimes x ) 
- \varphi( 1 \otimes x ) \otimes \varphi( d_0 \otimes x )
= d_0 \otimes 1 - 1 \otimes d_0
\neq 0.
\]
Thus, $t \notin Z_2(A)$, as desired. We conclude that $A$ is not zero-product
determined.
\end{exa}

\section{Semimultiplicative maps and transferrable elements}
\label{sec:SMD}

In order to study zero-balanced algebras and semimultiplicative maps, in this section we introduce the subalgebra $\mathcal{T}(A)$ of \emph{transferrable elements} in $A$, which measures \emph{how far} $A$ is from being zero-product balanced.
To study the case that~$A$ is nonunital, we introduce the set of \emph{multiplier transferrable elements} $\mathcal{T}_\mathrm{M}(A)$.

Given a $K$-algebra $A$, its multiplier algebra $M(A)$ is defined as the set of pairs $(L,R)$ of $K$-linear maps $L,R\colon A\to A$ such that
\[
xL(y)=R(x)y, \quad 
L(xy)=L(x)y, \andSep 
R(xy)=xR(y)
\] 
for all $x,y\in A$.
Then $M(A)$ is a $K$-algebra for the componentwise $K$-linear structure, and with the product of $(L_1,R_1),(L_2,R_2) \in M(A)$ given by $(L_1L_2,R_2R_1)$.
Every $a \in A$ defines an element $(L_a,R_a) \in M(A)$ with $L_a(x) := ax$ and $R_a(x) := xa$ for $x \in A$.
The map $\mu\colon A \to M(A)$, $a \mapsto (L_a,R_a)$, is multiplicative and $K$-linear and it maps $A$ onto an essential ideal of $M(A)$.
The kernel of $\mu$ is the ideal of two-sided annihilators in $A$.
In particular, if $A$ is faithful, then $\mu$ identifies $A$ with an essential ideal of $M(A)$, and the multiplier algebra is the largest unital $K$-algebra containing $A$ as an essential ideal;
see Section~1 of~\cite{Dau69MultiplierRings} for details.

We show that every idempotent of $M(A)$ belongs to $\mathcal{T}_{\mathrm{M}}(A)$; see \autoref{prp:IdemInSMD}.
It follows that an algebra $A$ is zero-product balanced whenever it is contained in the subalgebra of $M(A)$ generated by idempotents;
see \autoref{prp:IdemGenBalanced}.
This is an improvement over earlier results, since it can be used to show zero-product balancedness for certain algebras that contain no nontrivial idempotents.
For example, given any ring $R$, and $n\geq 2$, the matrix ring $M_n(R)$ is zero-product balanced, although it may not contain any nonzero idempotents.

\smallskip

Throughout this section, $K$ denotes a commutative, unital ring.

\begin{dfn}
\label{dfn:SMD}
Let $A$ be a $K$-algebra. 
We say that an element $b\in M(A)$ is \emph{(multiplier) transferrable} if $ab\otimes c-a\otimes bc \in Z_2(A)$ for all $a,c\in A$.
We write $\mathcal{T}_{\mathrm{M}}(A)$ for the set of multiplier transferrable elements, and we let $\mathcal{T}(A)=A\cap \mathcal{T}_{\mathrm{M}}(A)$ denote the set of \emph{transferrable} elements in $A$.
\end{dfn}

We omit the straightforward proof of the next result.

\begin{prp}
\label{prp:SMDIsSubalgebra}
Let $A$ be a $K$-algebra.
Then $\mathcal{T}_{\mathrm{M}}(A)$ is a subalgebra of $M(A)$, and thus $\mathcal{T}(A)$ is a subalgebra of $A$.
\end{prp}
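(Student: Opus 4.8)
The plan is to verify directly that $\mathcal{T}_{\mathrm{M}}(A)$ is closed under the $K$-module operations and under the multiplication of $M(A)$, since once this is established, $\mathcal{T}(A) = A \cap \mathcal{T}_{\mathrm{M}}(A)$ is automatically a subalgebra of $A$ as an intersection of subalgebras (using that $A$ is identified with an ideal of $M(A)$ via $\mu$). The key point throughout is that $Z_2(A)$ is a $K$-submodule of $A \otimes_K A$ by construction, so membership of defining expressions in $Z_2(A)$ is preserved under $K$-linear combinations.

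First I would check that $\mathcal{T}_{\mathrm{M}}(A)$ is a $K$-submodule of $M(A)$. For $b_1, b_2 \in \mathcal{T}_{\mathrm{M}}(A)$ and $\lambda_1, \lambda_2 \in K$, and arbitrary $a, c \in A$, the expression
\[
a(\lambda_1 b_1 + \lambda_2 b_2) \otimes c - a \otimes (\lambda_1 b_1 + \lambda_2 b_2)c
\]
expands by $K$-bilinearity of $\otimes_K$ into $\lambda_1\big(ab_1 \otimes c - a \otimes b_1 c\big) + \lambda_2\big(ab_2 \otimes c - a \otimes b_2 c\big)$, each summand lying in $Z_2(A)$ by assumption; hence the total lies in $Z_2(A)$. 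Since $0 \in \mathcal{T}_{\mathrm{M}}(A)$ trivially, this gives the submodule structure.

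The multiplicative closure is the step requiring the most care, and I expect it to be the main obstacle, since it involves inserting an intermediate factor. Take $b_1, b_2 \in \mathcal{T}_{\mathrm{M}}(A)$ and $a, c \in A$; I want $a(b_1 b_2) \otimes c - a \otimes (b_1 b_2)c \in Z_2(A)$. The idea is to telescope through an intermediate term: write
\[
a b_1 b_2 \otimes c - a \otimes b_1 b_2 c = \big(a b_1 b_2 \otimes c - a b_1 \otimes b_2 c\big) + \big(a b_1 \otimes b_2 c - a \otimes b_1 b_2 c\big).
\]
The first bracket is $(ab_1) b_2 \otimes c - (ab_1) \otimes b_2 c$, which lies in $Z_2(A)$ since $b_2$ is transferrable and $ab_1 \in A$. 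The second bracket is $a b_1 \otimes (b_2 c) - a \otimes b_1 (b_2 c)$, which lies in $Z_2(A)$ since $b_1$ is transferrable and $b_2 c \in A$. Here one uses the multiplier axioms to justify that $a(b_1 b_2)$, $(ab_1)b_2$, and the various associativity rearrangements of products of multipliers applied to elements of $A$ all agree, so that the two brackets genuinely telescope; this bookkeeping is routine but is exactly where the multiplier-algebra structure (the compatibility relations between $L$ and $R$ and the convention $b_1 b_2$ for the product in $M(A)$) must be invoked explicitly. Adding the two memberships and using that $Z_2(A)$ is a submodule closes the argument.

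Finally, that $\mathcal{T}(A) = A \cap \mathcal{T}_{\mathrm{M}}(A)$ is a subalgebra of $A$ follows immediately: it is the intersection of the subalgebra $\mu(A) \cong A$ with the subalgebra $\mathcal{T}_{\mathrm{M}}(A)$ inside $M(A)$, and an intersection of subalgebras is a subalgebra.
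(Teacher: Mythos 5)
Your proposal is correct, and it matches the paper's intent exactly: the paper explicitly omits this proof as straightforward, and your verification---$K$-submodule closure via bilinearity of $\otimes_K$, multiplicative closure via the telescoping decomposition through the intermediate term $ab_1 \otimes b_2c$ (using that $b_2$ is transferrable against $ab_1 \in A$ and that $b_1$ is transferrable against $b_2c \in A$), together with the multiplier identities $(ab_1)b_2 = a(b_1b_2)$ and $b_1(b_2c) = (b_1b_2)c$---is precisely the routine argument being omitted. The only cosmetic caveat is that the identification of $A$ with $\mu(A) \subseteq M(A)$ in the final step requires $\mu$ to be injective (i.e.\ $A$ faithful); in general one should read $\mathcal{T}(A)$ as the set of $b \in A$ satisfying the defining condition, which is a subalgebra of $A$ by the same two computations carried out inside $A$, so this does not affect correctness.
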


\begin{prp}
\label{prp:CharBalancedSemimult}
Let $A$ be a $K$-algebra.
Then the following are equivalent:
\begin{enumerate}
\item
$A$ is zero-product balanced;
\item
$A = \mathcal{T}(A)$;
\item
$A \subseteq \mathcal{T}_{\mathrm{M}}(A)$;
\item
every zero-product preserving, bilinear map $A \times A \to V$ to a $K$-module $V$ is semimultiplicative.
\end{enumerate}
\end{prp}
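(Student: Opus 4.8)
The plan is to obtain the four equivalences almost entirely by unwinding definitions, using \autoref{prp:CharBalanced} for the one implication that is not purely formal. The crux is the observation that conditions (1), (2), and (3) are reformulations of a single statement.

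First I would treat (1) $\Leftrightarrow$ (2). By \autoref{dfn:Balanced}, zero-product balancedness of $A$ is the assertion that $ab \otimes c - a \otimes bc \in Z_2(A)$ for all $a,b,c \in A$. By \autoref{dfn:SMD}, an element $b \in A$, viewed as $\mu(b) \in M(A)$, is transferrable exactly when $ab \otimes c - a \otimes bc \in Z_2(A)$ for all $a,c \in A$; here I would first record that the multiplier action of $\mu(b)$ on $A$ coincides with ordinary left and right multiplication by $b$, so that $a\mu(b) = ab$ and $\mu(b)c = bc$. Consequently, the statement $A = \mathcal{T}(A)$---that every element of $A$ is transferrable---is a verbatim restatement of (1). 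The equivalence (2) $\Leftrightarrow$ (3) is then a formal manipulation: since $\mathcal{T}(A) = A \cap \mathcal{T}_{\mathrm{M}}(A)$ and $\mathcal{T}(A) \subseteq A$ always hold, the equality $A = \mathcal{T}(A)$ holds if and only if $A \subseteq \mathcal{T}_{\mathrm{M}}(A)$.

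It remains to link this with (4), which I would do by invoking \autoref{prp:CharBalanced} directly. The only thing to make explicit is the meaning of \emph{semimultiplicative} for a bilinear map: following \autoref{dfn:Semimult}, a bilinear map $\varphi \colon A \times A \to V$ is semimultiplicative when $\varphi(ab,c) = \varphi(a,bc)$ for all $a,b,c \in A$. Under this reading, condition (4) is word-for-word condition (2) of \autoref{prp:CharBalanced}, so $(1) \Leftrightarrow (4)$ is immediate.

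I do not anticipate a genuine obstacle, as the argument is bookkeeping rather than computation. The single point deserving care is the passage through $M(A)$ in (1) $\Leftrightarrow$ (2): one must verify that interpreting the products $ab$ and $bc$ through the multiplier structure of $\mu(b)$ recovers the ordinary products in $A$, so that transferrability of $\mu(b)$ reduces precisely to the defining inclusion of zero-product balancedness---this remains correct even if $A$ is not faithful and $\mu$ has nonzero kernel.
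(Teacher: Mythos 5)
Your proposal is correct and follows essentially the same route as the paper's own proof, which likewise disposes of (1)$\Leftrightarrow$(2)$\Leftrightarrow$(3) by unwinding \autoref{dfn:Balanced} and \autoref{dfn:SMD}, and obtains (1)$\Leftrightarrow$(4) by citing \autoref{prp:CharBalanced}. Your extra care in checking that the multiplier action of $\mu(b)$ recovers ordinary multiplication in $A$ (even when $\mu$ is not injective) is a reasonable elaboration of what the paper leaves implicit.
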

\begin{proof}
The equivalence of~(1), (2) and~(3) follows from the definition of zero-product balancedness.
The equivalence of~(1) and~(4) follows from \autoref{prp:CharBalanced}.
\end{proof}

\begin{cor}
\label{prp:BalancedImplSemimult}
Let $A$ be a zero-product balanced $K$-algebra.
Then every linear, zero-product preserving map $\pi\colon A \to B$ to another $K$-algebra is semimultiplicative.
\end{cor}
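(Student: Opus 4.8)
The plan is to deduce this directly from the characterization of zero-product balancedness in \autoref{prp:CharBalanced}, so the whole argument amounts to manufacturing the right zero-product preserving bilinear map out of $\pi$. Given a linear, zero-product preserving map $\pi\colon A\to B$, I would define a map $\varphi\colon A\times A\to B$ by $\varphi(a,b):=\pi(a)\pi(b)$, viewing $B$ as a $K$-module in the obvious way.

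First I would check that $\varphi$ is $K$-bilinear: since $\pi$ is $K$-linear and the multiplication $B\times B\to B$ is $K$-bilinear, the composite $\varphi$ is $K$-bilinear in each variable, and neither nonunitality nor noncommutativity of $B$ interferes with this. Next I would verify that $\varphi$ preserves zero-products. If $a,b\in A$ satisfy $ab=0$, then by hypothesis $\pi(a)\pi(b)=0$, hence $\varphi(a,b)=0$, which is exactly the required condition.

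Finally, since $A$ is zero-product balanced, I would apply \autoref{prp:CharBalanced}(2) to $\varphi$ to obtain $\varphi(ab,c)=\varphi(a,bc)$ for all $a,b,c\in A$. Unwinding the definition of $\varphi$, this reads $\pi(ab)\pi(c)=\pi(a)\pi(bc)$, which is precisely the identity defining semimultiplicativity of $\pi$.

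I expect no genuine obstacle here; the argument is a direct application of the characterization, and the only point requiring even mild attention is identifying the correct bilinear map and confirming that $B$, as target, qualifies as a $K$-module so that \autoref{prp:CharBalanced} is applicable. Equivalently, one could invoke \autoref{prp:CharBalancedSemimult}(4) directly, which asserts that every zero-product preserving bilinear map out of $A$ is semimultiplicative; the corollary is then the special case obtained from $\varphi(a,b)=\pi(a)\pi(b)$.
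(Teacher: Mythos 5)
Your proposal is correct and is essentially identical to the paper's own proof: the paper also defines $\varphi(a,b):=\pi(a)\pi(b)$, notes that it preserves zero-products, and applies the characterization of zero-product balancedness (via \autoref{prp:CharBalancedSemimult}, which rests on \autoref{prp:CharBalanced}) to conclude $\pi(ab)\pi(c)=\varphi(ab,c)=\varphi(a,bc)=\pi(a)\pi(bc)$. No gaps; nothing further to add.
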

\begin{proof}
Consider the bilinear map $\varphi \colon A \times A \to B$ given by $\varphi(a,b) := \pi(a)\pi(b)$.
Then~$\varphi$ preserves zero-products, and therefore is semimultiplicative by \autoref{prp:CharBalancedSemimult}.
We get
\[
\pi(ab)\pi(c)
= \varphi(ab,c)
= \varphi(a,bc)
= \pi(a)\pi(bc)
\]
for all $a,b,c \in A$.
\end{proof}

\begin{lma}
\label{prp:IdemInSMD}
Let $A$ be a $K$-algebra.
Then $\mathcal{T}_{\mathrm{M}}(A)$ contains all idempotents in $M(A)$.
\end{lma}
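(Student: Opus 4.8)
The plan is to fix an idempotent $e \in M(A)$ and reduce everything to the fact that $M(A)$ is unital, so that $1 - e$ is again an element of $M(A)$, orthogonal to $e$ in the sense that $e(1-e) = (1-e)e = 0$. By \autoref{dfn:SMD}, to show $e \in \mathcal{T}_{\mathrm{M}}(A)$ I must verify that $ae \otimes c - a \otimes ec \in Z_2(A)$ for all $a, c \in A$, so I would fix such $a$ and $c$ and aim to rewrite this difference as a combination of simple tensors with vanishing product.

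First I would insert the decomposition of the identity $1 = e + (1-e)$ into both slots, writing $c = ec + (1-e)c$ in the left-hand tensor and $a = ae + a(1-e)$ in the right-hand tensor. Since $A$ is an ideal of $M(A)$, each of the factors $ae$, $a(1-e)$, $ec$, and $(1-e)c$ lies in $A$, so all the tensors below are genuine elements of $A \otimes_K A$. Expanding and cancelling the common term $ae \otimes ec$ gives
\[
ae \otimes c - a \otimes ec = ae \otimes (1-e)c - a(1-e) \otimes ec.
\]

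It then remains to observe that each of the two simple tensors on the right lies in $Z_2(A)$. For the first, $ae \cdot (1-e)c = a\,[e(1-e)]\,c = 0$ because $e(1-e) = e - e^2 = 0$; for the second, $a(1-e) \cdot ec = a\,[(1-e)e]\,c = 0$ because $(1-e)e = e - e^2 = 0$. Hence both tensors, and therefore their difference, belong to $Z_2(A)$, which is exactly what is required.

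I do not expect a genuine obstacle here: the argument is a direct computation and uses neither that $A$ is zero-product balanced nor any hypothesis on $A$ beyond the standing assumption that it is a $K$-algebra. The only points demanding a little care are that the unit of $M(A)$ is available (it is, namely $(\id, \id)$) and that $A$ being an ideal guarantees all intermediate products stay inside $A$, so that the expressions make sense in $A \otimes_K A$; both are immediate from the construction of $M(A)$ recalled at the start of the section.
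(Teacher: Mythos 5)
Your proof is correct and is essentially the paper's own argument: after expanding $(1-e)c = c - ec$ and $a(1-e) = a - ae$, your identity $ae \otimes c - a \otimes ec = ae \otimes (1-e)c - a(1-e) \otimes ec$ is exactly the paper's decomposition into the two simple tensors $ae \otimes (c - ec)$ and $(ae - a) \otimes ec$, each killed by the same zero-product computations $ae(c-ec)=0$ and $(ae-a)ec=0$. One pedantic caveat: $A$ is literally an (essential) ideal of $M(A)$ only when $A$ is faithful, since the canonical map $A \to M(A)$ kills the two-sided annihilators; but this does not affect your argument, because the products $ae$, $ec$, $a(1-e)$, $(1-e)c$ lie in $A$ by the very definition of the multiplier action ($ae := R(a)$, $ec := L(c)$ for $e=(L,R)$), which is all that is needed.
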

\begin{proof}
Let $a,c \in A$, and let $e \in M(A)$ be idempotent.
We need to verify that $ae \otimes c - a \otimes ec \in Z_2(A)$.
Note that $ae$ and $ec$ belong to $A$.
We have $ae(c-ec) = 0$ and therefore
\[
ae\otimes c - ae\otimes ec 
= ae\otimes (c-ec) \in Z_2(A).
\]
Similarly, since $(ae-a)ec=0$, we get 
\[
ae\otimes ec - ae\otimes c 
= (ae-a)\otimes ec \in Z_2(A).
\]
It follows that
\[
ae\otimes c - a\otimes ec
= \big( ae\otimes c - ae\otimes ec \big) + \big( ae\otimes ec - ae\otimes c \big) \in Z_2(A). \qedhere
\]
\end{proof}

By \cite[Theorem~4.1]{Bre12MultAlgDetZP}, a unital $K$-algebra that is generated by its idempotents (as a $K$-algebra) is zero-product determined as a $K$-algebra;
see also \cite[Theorem~2.15]{Bre21BookZeroProdDetermined}.
By \autoref{prp:ZPDImplBalanced}, such algebras are zero-product balanced.
We recover Bre\v{s}ar's result and generalize it to the nonunital setting:

\begin{prp}
\label{prp:IdemGenBalanced}
Let $A$ be a $K$-algebra which 
is contained in the subalgebra of $M(A)$ generated by the idempotents in $M(A)$.
Then $A$ is zero-product balanced.
\end{prp}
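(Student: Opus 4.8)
The plan is to reduce everything to the characterization of zero-product balancedness in \autoref{prp:CharBalancedSemimult}, namely that $A$ is zero-product balanced if and only if $A \subseteq \mathcal{T}_{\mathrm{M}}(A)$. Thus the whole task is to show that the given hypothesis forces this inclusion. The two structural facts I need are already available: by \autoref{prp:SMDIsSubalgebra} the set $\mathcal{T}_{\mathrm{M}}(A)$ is a subalgebra of $M(A)$, and by \autoref{prp:IdemInSMD} it contains every idempotent of $M(A)$.

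First I would combine these two observations. Since $\mathcal{T}_{\mathrm{M}}(A)$ is a subalgebra of $M(A)$ that contains all idempotents of $M(A)$, it necessarily contains the smallest subalgebra of $M(A)$ generated by those idempotents (a subalgebra is closed under the operations used to generate it). By the hypothesis of the proposition, $A$ is contained in precisely this idempotent-generated subalgebra, and therefore $A \subseteq \mathcal{T}_{\mathrm{M}}(A)$.

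Finally, I would invoke the equivalence of conditions~(1) and~(3) in \autoref{prp:CharBalancedSemimult}: the inclusion $A \subseteq \mathcal{T}_{\mathrm{M}}(A)$ just obtained is exactly the assertion that $A$ is zero-product balanced, which completes the argument. I do not expect any genuine obstacle in this proposition: the substantive computation was already carried out in \autoref{prp:IdemInSMD}, where transferability of idempotents of $M(A)$ was verified directly using that $ae(c-ec)=0$ and $(ae-a)ec=0$, and the present statement is merely an assembly of the preceding results, combining closure of $\mathcal{T}_{\mathrm{M}}(A)$ under the algebra operations with the containment of idempotents.
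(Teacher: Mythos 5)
Your proposal is correct and follows exactly the paper's own argument: combine \autoref{prp:IdemInSMD} and \autoref{prp:SMDIsSubalgebra} to conclude that $\mathcal{T}_{\mathrm{M}}(A)$ contains the idempotent-generated subalgebra of $M(A)$, hence $A \subseteq \mathcal{T}_{\mathrm{M}}(A)$, and then invoke \autoref{prp:CharBalancedSemimult}. Nothing is missing; this is the same assembly of the preceding results that the paper gives.
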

\begin{proof}
By \autoref{prp:IdemInSMD}, every idempotent in $M(A)$ belongs to $\mathcal{T}_{\mathrm{M}}(A)$.
Further, by \autoref{prp:SMDIsSubalgebra}, $\mathcal{T}_{\mathrm{M}}(A)$ is a subalgebra of $M(A)$, and therefore $A \subseteq \mathcal{T}_{\mathrm{M}}(A)$, by assumption.
By \autoref{prp:CharBalancedSemimult}, this implies that $A$ is zero-product balanced.
\end{proof}

\begin{exa}
\label{exa:IdemGen}
The following classes of rings are generated by their idempotents, and therefore are zero-product balanced;
see \cite[after Lemma~2.1]{Bre07CharHomoRingsIdempotents} for the first three, and \cite[Corollary~3.8]{Rob16LieIdeals} for the last one:
\begin{itemize}
\item
simple rings that contain a nontrivial idempotent
(this includes all simple, unital \ca{s} that contain a projection $p\neq 0,1$);
\item
unital rings $R$ containing an idempotent $e$ such that $e$ and $1-e$ are full, that is, $\linSpan_\ZZ ReR=\linSpan_\ZZ R(1-e)R=R$;
\item
matrix rings $M_n(R)$, where $R$ is any unital ring and $n\geq 2$;
\item
unital \ca{s} of real rank zero that have no one-dimensional irreducible representations.
\end{itemize}
\end{exa}

We now turn to matrix algebras.
If $A$ is a \emph{unital} $K$-algebra and $n\geq 2$, then the matrix algebra $M_n(A)$ is generated by its idempotents as a $K$-algebra (even as a ring);
see \cite[Corollary~2.4]{Bre21BookZeroProdDetermined}.
It follows from \autoref{prp:IdemGenBalanced} that $M_n(A)$ is zero-product balanced (equivalently by \autoref{prp:MultipleFactorization}, it is zero-product determined);
see also \cite[Theorem~2.1]{BreGraOrt09ZPDMatrixAlg} and \cite[Corollary~2.17]{Bre21BookZeroProdDetermined}
In the next theorem, we extend this result by showing that matrix algebras over \emph{not necessarily unital} algebras are always zero-product balanced.

\begin{thm}
\label{prp:MatrixAlgBalanced}
Let $A$ be a (not necessarily unital) $K$-algebra, and let $n\geq 2$.
Then the matrix algebra $M_n(A)$ is zero-product balanced (even as a ring).
\end{thm}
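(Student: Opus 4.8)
The plan is to apply \autoref{prp:IdemGenBalanced}, so the goal is to show that $M_n(A)$ is contained in the subalgebra of its multiplier algebra $M(M_n(A))$ generated by the idempotents of $M(M_n(A))$. Since zero-product balancedness as a ring implies the same over any $K$ by \autoref{prp:BalancedChangeCoefficients}, I would work throughout with $K = \ZZ$, that is, treat $A$ as a ring. The essential difficulty is that $M_n(A)$ itself may have no nonzero idempotents (for instance when $A$ has zero multiplication), so the idempotents must be located in the larger, unital algebra $M(M_n(A))$.

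First I would set up a ring homomorphism $\Theta \colon M_n(M(A)) \to M(M_n(A))$ sending a matrix $(m_{ij})$ of multipliers to the pair of maps on $M_n(A)$ given by left and right ``matrix multiplication'' by $(m_{ij})$; the double-centralizer identity $a L_{jk}(b) = R_{jk}(a) b$ for the entries $m_{jk} = (L_{jk}, R_{jk})$ is exactly what is needed to check that $\Theta$ of such a matrix is a genuine multiplier, and that $\Theta$ is multiplicative is then routine. A short computation shows that $\Theta$ intertwines the two canonical embeddings, that is, $\Theta \circ \mu^{(n)} = \mu_{M_n(A)}$, where $\mu^{(n)} \colon M_n(A) \to M_n(M(A))$ applies the canonical map $\mu \colon A \to M(A)$ entrywise. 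Since $M(A)$ is unital, $M_n(M(A))$ is a unital ring containing the standard matrix units $E_{ij}$, and $\Theta$ carries idempotents to idempotents; hence it suffices to show that $\mu^{(n)}(M_n(A))$ lies in the subalgebra of $M_n(M(A))$ generated by idempotents.

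The key computation is that for $i \neq j$ and any $a \in M(A)$ the element $E_{ii} + a E_{ij}$ is idempotent (using $E_{ij}E_{ij} = 0$ and $E_{ij}E_{ii} = 0$), so that $a E_{ij} = (E_{ii} + a E_{ij}) - E_{ii}$ is a difference of idempotents; this handles all off-diagonal entries. For the diagonal entries I would use that $n \geq 2$: fixing any $j \neq i$, the identity $E_{ij}E_{ji} = E_{ii}$ gives $a E_{ii} = (a E_{ij}) E_{ji}$, a product of two elements already shown to lie in the subalgebra generated by idempotents (the second factor being $E_{ji} = (E_{jj} + E_{ji}) - E_{jj}$). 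Since an arbitrary element of $M_n(A)$ is a finite sum $\sum_{i,j} a_{ij} E_{ij}$ with $a_{ij} \in A \subseteq M(A)$, it follows that $\mu^{(n)}(M_n(A))$, and hence $\mu_{M_n(A)}(M_n(A))$, lies in the subalgebra generated by idempotents, and \autoref{prp:IdemGenBalanced} yields that $M_n(A)$ is zero-product balanced. The main obstacle is exactly the nonunital case just handled: passing to the unital matrix algebra $M_n(M(A))$ through $\Theta$ and recovering the diagonal entries, which is where the hypothesis $n \geq 2$ is indispensable, consistent with the fact that $M_1(A) = A$ need not be balanced.
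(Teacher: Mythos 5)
Your proof is correct and follows essentially the same route as the paper: both pass from $M_n(A)$ to the unital algebra $M_n(M(A))$, use the natural homomorphism $M_n(M(A)) \to M(M_n(A))$ compatible with the canonical maps, observe that matrices over the unital ring $M(A)$ lie in the subring generated by idempotents, and conclude via \autoref{prp:IdemGenBalanced}. The only differences are cosmetic: where the paper cites \cite[Corollary~2.4]{Bre21BookZeroProdDetermined} for idempotent generation, you inline the standard matrix-unit argument ($aE_{ij} = (E_{ii}+aE_{ij})-E_{ii}$ and $aE_{ii} = (aE_{ij})E_{ji}$), and your explicit use of the entrywise map $\mu^{(n)}$ handles the non-faithful case slightly more carefully than the paper's phrasing that $M_n(A)$ ``is an ideal'' in $M_n(M(A))$.
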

\begin{proof}
The matrix algebra $M_n(A)$ is naturally an ideal in $M_n(M(A))$.
This induces a natural, unital homomorphism $M_n(M(A))\to M(M_n(A))$ that is the identity on $M_n(A)$.
By \cite[Corollary~2.4]{Bre21BookZeroProdDetermined}, $M_n(M(A))$ is generated by its idempotents as a ring.
It follows that the subring of $M(M_n(A))$ generated by the idempotents contains $M_n(A)$.
Hence, the result follows from \autoref{prp:IdemGenBalanced}.
\end{proof}

\begin{qst}
Is every (nonunital) matrix algebra zero-product determined? 
\end{qst}

The following easy example shows that an algebra may be contained in the subalgebra generated by idempotents of its multiplier algebra even if it does not contain any nonzero idempotents itself.

\begin{exa}
Let $C_0((0,1])$ be the $\CC$-algebra of continuous functions $f \colon [0,1] \to \CC$ satisfying $f(0) = 0$.
The matrix algebra $M_2(C_0((0,1]))$ is naturally isomorphic to the algebra of continuous functions $g \colon [0,1] \to M_2(\CC)$ with $g(0)=0$.
It follows that $M_2(C_0((0,1]))$ contains no nonzero idempotents.
Nevertheless, it is zero-product balanced by \autoref{prp:MatrixAlgBalanced}.
\end{exa}

\section{Weighted epimorphisms}
\label{sec:Weighted}

This section contains one of the main applications of zero-product balancedness, namely \autoref{prp:WeightedFromBalanced}:
it allows one to deduce that certain zero-product preserving maps are automatically weighted homomorphisms.

\smallskip

Throughout this section, $K$ denotes a unital, commutative ring.
A $K$-algebra~$A$ is said to be \emph{idempotent} if $A=\linSpan_K A^2$, that is, for every $a\in A$ there exist $n\geq 1$ and $b_j,c_j\in A$, for $j=1,\ldots,n$, such that $a=\sum_{j=1}^n b_jc_j$.
A $K$-algebra~$A$ is said to be \emph{faithful} if whenever $a\in A$ satisfies $aA=\{0\}$ or $Aa=\{0\}$, then $a=0$.
A $K$-algebra is idempotent if and only if it is idempotent as a ring (that is, as a $\ZZ$-algebra).
Similarly, being faithful also only depends on the ring structure.

The class of rings that are idempotent and faithful includes every unital ring, every simple ring, and -- using Cohen's factorization theorem -- every Banach algebra with a bounded left (or right) approximate identity; in particular, every \ca{}.
Further, every semiprime ring is faithful, but there exist prime rings that are not idempotent.

A \emph{centralizer} on a $K$-algebra $A$ is a $K$-linear map $S\colon A\to A$ such that $aS(b)=S(ab)=S(a)b$ for all $a,b\in A$.

\begin{dfn}
\label{dfn:weighted}
A $K$-linear map $\pi\colon A\to B$ between $K$-algebras is said to be a \emph{weighted homomorphism} if there exist a multiplicative, linear map $\pi_0\colon A\to B$ and a centralizer $S$ on $B$ such that $\pi=S\circ\pi_0$.

A \emph{weighted epimorphism} is a surjective, weighted homomorphism.
A \emph{weighted isomorphism} is a bijective, weighted homomorphism.
\end{dfn}

Some authors require the centralizer in the definition of a weighted homomorphism to be bijective, for example \cite[Definition~3.2]{AlaBreExtVil09MapsPresZP}.
We do not follow this convention since we want to include the important class of positively weighted $\ast$-homomorphisms between \ca{s};
see \cite{GarThi22arX:WeightedHomo}.

Moreover, in \autoref{prp:WeightedEpi} below we show that under mild assumptions on the algebras, the factorization of a \emph{surjective}, weighted homomorphism as in \autoref{dfn:weighted} is unique and the centralizer is automatically bijective.
Thus, in this setting, weighted epimorphisms are precisely the maps arising as an epimorphism composed with a bijective centralizer on the target;
and weighted isomorphisms are precisely the maps arising as an isomorphism composed with a bijective centralizer on the target.
(This also shows that the terminologies `weighted epimorphism' and `weighted isomorphism' are unambiguous in this setting.)

\smallskip

Recall that a map $\pi \colon A \to B$ between $K$-algebras is said to be \emph{semimultiplicative} if $\pi(ab)\pi(c) = \pi(a)\pi(bc)$ for all $a,b,c \in A$.

\begin{prp}
\label{prp:ConsequenceSemimult}
Let $\pi \colon A \to B$ be a surjective, semimultiplicative, linear map between idempotent $K$-algebras.
Assume that $B$ is faithful.

Then the map $T\colon B\to B$, given by
\[
T\Big( \sum_{j=1}^n\pi(a_j)\pi(b_j) \Big)
= \sum_{j=1}^n\pi(a_jb_j)
\]
for $a_j,b_j\in A$, is a well-defined, bijective centralizer on $B$ and the composition $\pi_0 := T\circ\pi\colon A\to B$ is an epimorphism (a surjective, multiplicative, $K$-linear map).
In particular, $\pi$ is a weighted homomorphism with a bijective weight.
\end{prp}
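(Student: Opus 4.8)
Let $\pi \colon A \to B$ be a surjective, semimultiplicative, linear map between idempotent $K$-algebras, and assume $B$ is faithful. Then the map $T \colon B \to B$ defined by $T\big(\sum_j \pi(a_j)\pi(b_j)\big) = \sum_j \pi(a_jb_j)$ is a well-defined, bijective centralizer, $\pi_0 := T \circ \pi$ is an epimorphism, and $\pi$ is a weighted homomorphism with bijective weight.

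The plan is as follows. First I would note that since $B$ is idempotent and $\pi$ is surjective, every element of $B$ can be written as $\sum_j \pi(a_j)\pi(b_j)$ for suitable $a_j,b_j \in A$: indeed surjectivity gives $B = \pi(A)$, and idempotence of $B$ lets us write any element of $B$ as a finite sum of products of elements of $\pi(A)$. So the formula defining $T$ covers all of $B$, and the only issue is well-definedness.

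The heart of the argument is well-definedness of $T$, which I expect to be the main obstacle. Concretely, I must show that if $\sum_{j=1}^n \pi(a_j)\pi(b_j) = 0$ in $B$, then $\sum_{j=1}^n \pi(a_jb_j) = 0$. Here is where faithfulness of $B$ enters. The idea is to test $\sum_j \pi(a_jb_j)$ against an arbitrary element of $B$ on the right (and symmetrically on the left), and use semimultiplicativity to move the product across. For any $c \in A$, semimultiplicativity gives $\pi(a_jb_j)\pi(c) = \pi(a_j)\pi(b_jc)$, so
\[
\Big(\sum_{j=1}^n \pi(a_jb_j)\Big)\pi(c)
= \sum_{j=1}^n \pi(a_j)\pi(b_jc).
\]
This last expression is not obviously zero, so I would instead test against products $\pi(c)\pi(d)$ and run semimultiplicativity twice, or more cleanly multiply the hypothesis $\sum_j \pi(a_j)\pi(b_j) = 0$ on the left by an arbitrary $\pi(e)$ and use semimultiplicativity in the form $\pi(e)\pi(a_jb_j)$ versus $\pi(ea_j)\pi(b_j)$. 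The cleanest route: for any $e \in A$,
\[
\pi(e)\Big(\sum_{j=1}^n \pi(a_jb_j)\Big)
= \sum_{j=1}^n \pi(e)\pi(a_jb_j)
= \sum_{j=1}^n \pi(ea_j)\pi(b_j),
\]
where the second equality is semimultiplicativity applied to the triple $(e, a_j, b_j)$. Now I rewrite $\sum_j \pi(ea_j)\pi(b_j)$ back using semimultiplicativity once more to relate it to $\big(\sum_j \pi(a_j)\pi(b_j)\big)$ after absorbing $e$; since $\sum_j \pi(a_j)\pi(b_j) = 0$, an analogous computation shows $\pi(e)\big(\sum_j \pi(a_jb_j)\big) = 0$ for all $e \in A$, hence $\pi(A)\cdot\big(\sum_j \pi(a_jb_j)\big) = \{0\}$, and by idempotence and surjectivity $B \cdot \big(\sum_j \pi(a_jb_j)\big) = \{0\}$. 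Faithfulness then forces $\sum_j \pi(a_jb_j) = 0$. This establishes that $T$ is well-defined; linearity of $T$ is immediate from the formula.

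The remaining verifications are routine. To see $T$ is a centralizer, I check $bT(b') = T(bb') = T(b)b'$ for $b,b' \in B$ by writing $b,b'$ as sums of products $\pi(\cdot)\pi(\cdot)$, applying the defining formula for $T$, and invoking semimultiplicativity to shift the multiplications; a short computation confirms the centralizer identities. For surjectivity of $T$ (hence bijectivity, once injectivity is handled): since $\pi$ is surjective and $T(\pi(a)\pi(b)) = \pi(ab)$, the range of $T$ contains $\linSpan_K \pi(A)^2 = \linSpan_K B^2 = B$ by idempotence of $B$, so $T$ is onto. For injectivity one uses that a centralizer on a faithful algebra with dense range is injective: if $T(b) = 0$ then $T(bb') = bT(b') $ for all $b'$ forces, via faithfulness, $b = 0$; more precisely one shows $T(b)=0 \Rightarrow bB = \{0\}$ or $Bb=\{0\}$ using the centralizer identities together with surjectivity of $T$, and concludes by faithfulness. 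Finally, $\pi_0 := T \circ \pi$ satisfies $\pi_0(a)\pi_0(a') = T(\pi(a))\,T(\pi(a'))$, and using that $T$ is a centralizer together with $T(\pi(a)\pi(a')) = \pi(aa')$ one gets $\pi_0(aa') = \pi_0(a)\pi_0(a')$, so $\pi_0$ is multiplicative; it is surjective since $\pi$ and $T$ are. Writing $S := T^{-1}$, which is again a bijective centralizer, gives $\pi = S \circ \pi_0$, exhibiting $\pi$ as a weighted homomorphism with bijective weight.
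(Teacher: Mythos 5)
Your framing is right: well-definedness of $T$ is the heart of the matter, and your injectivity argument (once $T$ is known to be a well-defined, surjective centralizer, $T(b)=0$ gives $bT(b')=T(b)b'=0$ for all $b'$, hence $bB=\{0\}$ by surjectivity of $T$, hence $b=0$ by faithfulness) is a valid alternative to the paper's route. But the step you single out as the crux is exactly where your argument fails. Suppose $\sum_j\pi(a_j)\pi(b_j)=0$ and set $v:=\sum_j\pi(a_jb_j)$. Your ``cleanest route'' computes
\[
\pi(e)v=\sum_j\pi(e)\pi(a_jb_j)=\sum_j\pi(ea_j)\pi(b_j),
\]
and then asserts that one more application of semimultiplicativity relates this to $\sum_j\pi(a_j)\pi(b_j)$ ``after absorbing $e$''. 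No such application exists: applying semimultiplicativity to $\pi(ea_j)\pi(b_j)$ (with the only available factorization $ea_j=e\cdot a_j$) merely returns $\pi(e)\pi(a_jb_j)$, undoing the previous step, and since $\pi$ is not multiplicative you cannot replace $\pi(ea_j)$ by $\pi(e)\pi(a_j)$ to bring the hypothesis into play. With a single test element there is no way to peel off a common factor independent of $j$, so the hypothesis $\sum_j\pi(a_j)\pi(b_j)=0$ never enters.

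The fix is the two-element test that you explicitly mentioned and then discarded; it is in fact the paper's proof. For $x,y\in A$,
\[
\pi(a_jb_j)\pi(x)\pi(y)=\pi(a_j)\pi(b_jx)\pi(y)=\pi(a_j)\pi(b_j)\pi(xy),
\]
so summing over $j$ gives $v\,\pi(x)\pi(y)=\bigl(\sum_j\pi(a_j)\pi(b_j)\bigr)\pi(xy)=0$: the point is that two applications shuttle $x$ into the second slot and back out, leaving the common right factor $\pi(xy)$. Then $vBB=\{0\}$ by surjectivity of $\pi$, and faithfulness applied twice gives $v=0$. (A mirror computation with two left factors, $\pi(f)\pi(e)\pi(a_jb_j)=\pi(fe)\pi(a_j)\pi(b_j)$, works equally well; the paper also proves the reverse implication by the same identity, which is its route to injectivity of $T$.) Two smaller points. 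First, your surjectivity argument for $T$ conflates domain and range: the values of $T$ are the elements $\pi(ab)$, so surjectivity follows from idempotence of $A$ (write $\pi(a)=\sum_j\pi(x_jy_j)=T(\sum_j\pi(x_j)\pi(y_j))$), not from idempotence of $B$. Second, the centralizer identities are not a purely formal ``shift'': to compute $T(bc)$ one writes $b=\pi(z)$, $c=\pi(w)$ using surjectivity, and comparing $T(bc)=\pi(zw)$ with $T(b)c$ again requires multiplying by a test element $\pi(a)$ and invoking faithfulness, exactly as in the well-definedness step.
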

\begin{proof}
We isolate the following fact for repeated use.

\textbf{Claim:} \emph{Let $a_1,\ldots,a_n,b_1,\ldots,b_n\in A$. Then }
\begin{equation*}\tag{4.1}\label{2.1}\sum_{j=1}^n\pi(a_j)\pi(b_j)=0 \ \mbox{ if and only if } \
 \sum_{j=1}^n\pi(a_jb_j)=0.
\end{equation*}
To prove the claim, let $x,y\in A$.
Then
\[
\sum_{j=1}^n\pi(a_jb_j)\pi(x)\pi(y)
= \sum_{j=1}^n\pi(a_j)\pi(b_jx)\pi(y)
= \sum_{j=1}^n\pi(a_j)\pi(b_j)\pi(xy).
\]

If $\sum_{j=1}^n\pi(a_j)\pi(b_j)=0$, then $\sum_{j=1}^n\pi(a_jb_j)\pi(A)\pi(A)=\{0\}$.
Using that $\pi$ is surjective, it follows that $\sum_{j=1}^n\pi(a_jb_j)BB=\{0\}$.
Since $B$ is faithful, we first deduce that  $\sum_{j=1}^n\pi(a_jb_j)B=\{0\}$, and then $\sum_{j=1}^n\pi(a_jb_j)=0$.

Now assume that $\sum_{j=1}^n\pi(a_jb_j)=0$.
Then $\sum_{j=1}^n\pi(a_j)\pi(b_j)\pi(A^2)=\{0\}$.
Using that $A=\linSpan_K A^2$ and that $\pi$ is surjective, it follows that $\sum_{j=1}^n\pi(a_j)\pi(b_j)B=\{0\}$, and so $\sum_{j=1}^n\pi(a_j)\pi(b_j)=0$. This proves the claim.
\vspace{0.15cm}

We first verify that $T$ as defined in the statement is a bijective centralizer.
Using the ``only if'' implication in the claim, and using that $B$ is idempotent and that $\pi$ is surjective, we see that $T$ is well-defined.
It is injective by the ``if'' implication.

We show that $T$ is surjective.
Let $v\in B$.
Using that $\pi$ is surjective and $A=\linSpan_K A^2$, we can choose $a_j,b_j\in A$ such that $v=\sum_j\pi(a_jb_j)$.
Then $v=T(\sum_j\pi(a_j)\pi(b_j))$.

We show that $T$ is a centralizer.
Let $b,c\in B$.
Choose $x_j,y_j,z,w\in A$ such that
\[
b=\pi(z)=\sum_j\pi(x_j)\pi(y_j) \andSep
c=\pi(w).
\]
Let $a\in A$.
Then
\begin{align*}
T(bc)\pi(a)
&= T(\pi(z)\pi(w))\pi(a)
= \pi(zw)\pi(a)
= \pi(z) \pi(wa) \\
&= \sum_j \pi(x_j)\pi(y_j) \pi(wa)
= \sum_j \pi(x_j) \pi(y_jw) \pi(a) \\
&= \sum_j \pi(x_jy_j)\pi(w) \pi(a)
= T\Big(\sum_j \pi(x_j)\pi(y_j)\Big) \pi(w) \pi(a) \\
&= T(b)c\pi(a).
\end{align*}
Using that $B$ is faithful and $\pi$ is surjective, we deduce that $T(bc)=T(b)c$.

Similarly, we have
\begin{align*}
\pi(a)T(cb)
&= \pi(a) \pi(wz)
= \pi(aw) \pi(z)
= \sum_j \pi(aw) \pi(x_j) \pi(y_j) \\
&= \sum_j \pi(a) \pi(w) \pi(x_jy_j)
= \pi(a) cT(b),
\end{align*}
which implies $T(cb)=cT(b)$.
This shows that $T$ is a centralizer.

To show that $\pi_0:=T\circ\pi$ is multiplicative, let $a,b\in A$.
Using that $T$ is a centralizer at the first two steps, we get
\[
T(\pi(a)) T(\pi(b))
= T\big( \pi(a) T(\pi(b)) \big)
= T\Big( T(\pi(a)\pi(b)) \Big)
= T( \pi(ab) ).
\]

Using that $T$ is bijective, it is clear that $\pi_0$ is surjective.
\end{proof}

\begin{thm}
\label{prp:WeightedEpi}
Let $\pi \colon A \to B$ be a surjective, linear map between idempotent $K$-algebras.
Assume that $B$ is faithful.

Then the following are equivalent:
\begin{enumerate}
\item
$\pi$ is semimultiplicative: 
$\pi(ab)\pi(c)=\pi(a)\pi(bc)$ for all $a,b,c\in A$;
\item
$\pi$ is a weighted homomorphism: 
there exist a homomorphism $\pi_0\colon A\to B$ and a centralizer $S$ on $B$ such that $\pi(a)=S(\pi_0(a))$ for every $a\in A$.
\end{enumerate}

Moreover, if the above hold, then the homomorphism $\pi_0$ and the centralizer $S$ as in~(2) are unique, and $\pi_0$ is surjective, and $S$ is bijective and satisfies
\begin{equation}\tag{4.2}
\label{eq:WeightedEpi}
S\big( \pi( ab ) \big)
= \pi(a)\pi(b)
\end{equation}
for all $a,b\in A$.
If these conditions hold, then $B$ is isomorphic to a quotient of $A$.
Specifically, $I=\{a\in A \colon \pi(a)=0\}$ is an ideal and $A/I\cong B$.

Further, $\pi$ is bijective (that is, a weighted isomorphism) if and only if $\pi_0$ is bijective (that is, an isomorphism).
\end{thm}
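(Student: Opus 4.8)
The plan is to bootstrap everything off \autoref{prp:ConsequenceSemimult}, which already does the substantive work: from a surjective, semimultiplicative $\pi$ it produces a bijective centralizer $T\colon B\to B$ satisfying $T(\pi(a)\pi(b))=\pi(ab)$ together with the epimorphism $\pi_0:=T\circ\pi$. First I would dispatch the easy direction (2)$\Rightarrow$(1). Assuming $\pi=S\circ\pi_0$ with $\pi_0$ multiplicative and $S$ a centralizer, I would substitute $\pi=S\circ\pi_0$ into both $\pi(ab)\pi(c)$ and $\pi(a)\pi(bc)$, use multiplicativity to replace $\pi_0(ab)$ by $\pi_0(a)\pi_0(b)$ and $\pi_0(bc)$ by $\pi_0(b)\pi_0(c)$, and then repeatedly apply the centralizer identities $xS(y)=S(xy)=S(x)y$. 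Both products collapse to $S^2\big(\pi_0(a)\pi_0(b)\pi_0(c)\big)$, which gives (1).

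For (1)$\Rightarrow$(2) I would invoke \autoref{prp:ConsequenceSemimult} to obtain $T$ and $\pi_0=T\circ\pi$, and set $S:=T^{-1}$. A short verification shows that the inverse of a bijective centralizer is again a centralizer: writing $u=S(a)$ and $v=S(b)$ and feeding $T(u)T(v)$ through the centralizer identities for $T$ yields $S(ab)=T(uv)=aS(b)=S(a)b$. Since $S\circ\pi_0=T^{-1}\circ T\circ\pi=\pi$, this establishes (2).

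For the ``moreover'' part the key observation is that equation~\eqref{eq:WeightedEpi} holds for \emph{every} factorization $\pi=S\circ\pi_0$ as in~(2): the same collapse computation from the first paragraph gives $S(\pi(ab))=S^2(\pi_0(ab))=\pi(a)\pi(b)$. Because $A$ is idempotent and $\pi$ is surjective, the elements $\pi(ab)$ span $B$, so~\eqref{eq:WeightedEpi} determines $S$ uniquely. Combining~\eqref{eq:WeightedEpi} with the defining identity $T(\pi(a)\pi(b))=\pi(ab)$ of $T$ forces $T\circ S=\id_B$ on the spanning set $\{\pi(ab)\}$ and $S\circ T=\id_B$ on the spanning set $\{\pi(a)\pi(b)\}$ (here using that $B$ is idempotent, so that these products span $B$); hence $S=T^{-1}$ is bijective and uniquely determined, and then $\pi_0=S^{-1}\circ\pi=T\circ\pi$ is uniquely determined and surjective. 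For the quotient statement I would note that $\kernel(\pi_0)=\kernel(\pi)=I$ since $T$ is injective, so $I$ is an ideal because $\pi_0$ is a homomorphism, and the first isomorphism theorem gives $A/I\cong B$. Finally, since $S$ is bijective, $\pi=S\circ\pi_0$ is bijective if and only if $\pi_0$ is, which yields the last assertion.

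The main obstacle is the uniqueness and bijectivity of $S$. This is the only point that genuinely requires more than reading off \autoref{prp:ConsequenceSemimult}: one must reconcile the canonical $T$ coming from that proposition with the abstract $S$ attached to an arbitrary factorization, and the bridge between them is precisely~\eqref{eq:WeightedEpi}. The care needed is in the spanning arguments, where idempotence of \emph{both} $A$ and $B$, together with surjectivity of $\pi$, is used to promote the identities $T\circ S=\id$ and $S\circ T=\id$ from generators of $B$ to all of $B$.
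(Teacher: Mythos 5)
Your proposal is correct and follows essentially the same route as the paper: both deduce (1)$\Rightarrow$(2) from \autoref{prp:ConsequenceSemimult}, handle (2)$\Rightarrow$(1) by a direct centralizer computation, and obtain uniqueness and bijectivity of $S$ by showing that \emph{any} factorization satisfies~\eqref{eq:WeightedEpi} and then using that the elements $\pi(ab)$ span $B$ (via idempotence of $A$ and surjectivity of $\pi$). The only difference is cosmetic: you explicitly verify that the inverse of a bijective centralizer is again a centralizer (a fact the paper leaves implicit in the ``in particular'' of \autoref{prp:ConsequenceSemimult}), and your two-sided identity check $T\circ S=\id$, $S\circ T=\id$ replaces the paper's direct identification $\widetilde{S}=S$ on the same spanning set.
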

\begin{proof}
To verify that~(2) implies~(1), assume that $\pi=S\circ\pi_0$ for a multiplicative, $K$-linear map $\pi_0\colon A\to B$ and a centralizer $S$ on $B$.
Given $a,b,c\in A$, we have
\begin{align*}
\pi(ab)\pi(c)
&= S(\pi_0(ab))S(\pi_0(c))
= S(\pi_0(a)\pi_0(b))S(\pi_0(c)) \\
&= S(\pi_0(a))\pi_0(b)S(\pi_0(c)) 
= S(\pi_0(a))S(\pi_0(b)\pi_0(c))
= \pi(a)\pi(bc).
\end{align*}

Conversely, to verify that~(1) implies~(2), assume that $\pi$ is semimultiplicative.
It follows from \autoref{prp:ConsequenceSemimult} that there exist a surjective homomorphism $\pi_0\colon A\to B$ and a bijective centralizer $S$ on $B$ satisfying \eqref{eq:WeightedEpi} such that $\pi(a)=S(\pi_0(a))$ for every $a\in A$.
This shows that~(2) holds.

To verify that $\pi_0$ and $S$ are unique, let $\widetilde{\pi_0}\colon A\to B$ be a homomorphism and let~$\widetilde{S}$ be a centralizer on $B$ such that $\pi(a)=\widetilde{S}(\widetilde{\pi_0}(a))$ for every $a\in S$.
A priori, $\widetilde{\pi_0}$ is not necessarily surjective and $\widetilde{S}$ is not necessarily bijective.
We first show that~$\widetilde{S}$ satisfies \eqref{eq:WeightedEpi}.
Given $a,b\in A$, we have
\begin{align*}
\widetilde{S}\big( \pi(ab) \big)
&= \widetilde{S}\big( \widetilde{S}(\widetilde{\pi_0}(a)\widetilde{\pi_0}(b)) \big)
= \widetilde{S}\big( \widetilde{\pi_0}(a) \widetilde{S}(\widetilde{\pi_0}(b)) \big)
= \widetilde{S}\big( \widetilde{\pi_0}(a) \pi(b) \big) \\
&= \widetilde{S}\big( \widetilde{\pi_0}(a) \big) \pi(b) 
= \pi(a) \pi(b).
\end{align*}

Using that $\pi$ is surjective and $A$ is idempotent, every element in $B$ is of the form $\sum_j \pi(a_jb_j)$ for some $a_j,b_j\in A$, and it follows that $S=\widetilde{S}$.
In particular, $\widetilde{S}$ is bijective.
It then follows that $\widetilde{\pi_0}=\pi_0$.

Using that $S$ is bijective, it follows that $I$ (the kernel of $\pi$) agrees with the kernel of $\pi_0$, which clearly is an ideal of $A$.
Further, $\pi_0$ induces an isomorphism between~$A/I$ and $B$.
\end{proof}

The next result presents a major application of being zero-product balanced.

\begin{thm}
\label{prp:WeightedFromBalanced}
Let $\pi \colon A \to B$ be a surjective, linear map between idempotent $K$-algebras.
Assume that $A$ is zero-product balanced and that $B$ is faithful.

Then the following are equivalent:
\begin{enumerate}
\item
$\pi$ preserves zero-products;
\item
$\pi$ is semimultiplicative; 
\item
$\pi$ is a weighted homomorphism.
\end{enumerate}

If these conditions hold, then $B$ is isomorphic to a quotient of $A$.
Specifically, $I=\{a\in A : \pi(a)=0\}$ is an ideal and $A/I\cong B$.
\end{thm}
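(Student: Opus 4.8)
The plan is to establish the cycle of implications $(1)\Rightarrow(2)\Rightarrow(3)\Rightarrow(1)$, after which the final assertion about $B$ being a quotient of $A$ will follow immediately from \autoref{prp:WeightedEpi}. The key observation is that \autoref{prp:WeightedEpi} already supplies the equivalence $(2)\Leftrightarrow(3)$ under exactly the standing hypotheses here (both algebras idempotent, $B$ faithful, $\pi$ surjective), so the only genuinely new content to verify is the link between zero-product preservation and semimultiplicativity, which is where the zero-product balancedness of $A$ enters.

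First I would dispatch $(1)\Rightarrow(2)$, which is the step that uses the main hypothesis on $A$. Since $A$ is zero-product balanced and $\pi$ preserves zero-products, I would invoke \autoref{prp:BalancedImplSemimult} directly: that corollary states precisely that a linear, zero-product preserving map out of a zero-product balanced algebra is automatically semimultiplicative. This is a one-line application and requires no further computation. Next, $(2)\Rightarrow(3)$ is the implication $(1)\Rightarrow(2)$ of \autoref{prp:WeightedEpi} (note the numbering there is internal to that theorem), whose hypotheses---$\pi$ surjective between idempotent algebras, $B$ faithful---are exactly those assumed here, so it applies verbatim.

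For the remaining implication $(3)\Rightarrow(1)$, I would argue that every weighted homomorphism preserves zero-products, provided the target is faithful and the source idempotent. Writing $\pi = S\circ\pi_0$ with $\pi_0$ multiplicative and $S$ a centralizer, suppose $ab=0$ in $A$. Then $\pi_0(a)\pi_0(b)=\pi_0(ab)=0$ since $\pi_0$ is multiplicative, and one computes $\pi(a)\pi(b)=S(\pi_0(a))S(\pi_0(b))=S\big(\pi_0(a)S(\pi_0(b))\big)=S\big(S(\pi_0(a)\pi_0(b))\big)=0$ using the centralizer identities $xS(y)=S(xy)=S(x)y$, exactly as in the computation establishing $(2)\Rightarrow(1)$ inside the proof of \autoref{prp:WeightedEpi}. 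Alternatively, and more cleanly, one can note that $(3)\Rightarrow(2)$ by the already-cited equivalence in \autoref{prp:WeightedEpi}, and then deduce $(2)\Rightarrow(1)$ from the general fact that a surjective semimultiplicative map from an idempotent algebra to a faithful algebra preserves zero-products (this uses faithfulness of $B$ to cancel: if $ab=0$ then $\pi(a)\pi(b)$ annihilates $\pi(A^2)=\pi(A)$ on the right, hence annihilates $B$ by surjectivity, hence vanishes).

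Finally, once the three conditions are shown equivalent, the structural conclusion follows: assuming the conditions hold, $\pi$ is in particular a semimultiplicative surjection, so \autoref{prp:WeightedEpi} yields that $I=\kernel(\pi)$ is an ideal and $A/I\cong B$ via the induced homomorphism $\pi_0$. I do not expect any serious obstacle here, since the heavy lifting has already been done in the preceding results; the main point of care is simply bookkeeping---making sure the hypotheses of \autoref{prp:WeightedEpi} and \autoref{prp:BalancedImplSemimult} are checked before each citation, and being explicit that faithfulness of $B$ is what powers the cancellation in $(2)\Rightarrow(1)$.
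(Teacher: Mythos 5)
Your proposal is correct and follows essentially the same route as the paper: (1)$\Rightarrow$(2) by \autoref{prp:BalancedImplSemimult}, the equivalence of (2) and (3) by \autoref{prp:WeightedEpi}, the return to (1) via the cancellation argument (if $ab=0$ then $\pi(a)\pi(b)\pi(xy)=\pi(ab)\pi(x)\pi(y)=0$, so $\pi(a)\pi(b)B=\{0\}$ by idempotency and surjectivity, hence $\pi(a)\pi(b)=0$ by faithfulness), and the quotient statement from \autoref{prp:WeightedEpi}. Your primary option for (3)$\Rightarrow$(1) --- the direct centralizer computation $S(\pi_0(a))S(\pi_0(b))=S(S(\pi_0(a)\pi_0(b)))=0$ --- is a valid small variant (and in fact needs no hypotheses on $A$ or $B$ at all), but your ``cleaner'' alternative is exactly the paper's argument, so the two proofs coincide in substance.
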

\begin{proof}
By \autoref{prp:BalancedImplSemimult}, (1) implies~(2).
The equivalence between~(2) and~(3) follows from \autoref{prp:WeightedEpi}.
To show that~(2) implies~(1), let $a,b\in A$ satisfy $ab=0$.
Given $x,y\in A$, it follows that
\[
\pi(a)\pi(b)\pi(xy)
= \pi(a)\pi(bx)\pi(y)
= \pi(ab)\pi(x)\pi(y)
= 0.
\]
Using that $A=\linSpan_K A^2$ and that $\pi$ is surjective, it follows that $\pi(a)\pi(b)B=\{0\}$.
Since $B$ is faithful, we get $\pi(a)\pi(b)=0$, as desired.

Assuming that the conditions~(1)-(3) hold, the remaining statements follow from \autoref{prp:WeightedEpi}.
\end{proof}

\begin{cor}
\label{prp:IsoDetByZPStructure}
Let $A$ be a zero-product balanced, idempotent $K$-algebra, and let~$B$ be a faithful, idempotent $K$-algebra.
Then $A$ and $B$ are isomorphic as $K$-algebras if and only if there is a bijective, zero-product preserving, linear map $A \to B$. 
\end{cor}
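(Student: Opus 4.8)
The plan is to derive the corollary directly from \autoref{prp:WeightedFromBalanced} by showing that a bijective, zero-product preserving linear map is automatically an algebra isomorphism in this setting. First I would observe that one direction is trivial: if $A$ and $B$ are isomorphic as $K$-algebras, then any isomorphism is in particular a bijective, linear map preserving zero-products (indeed, preserving products outright), so such a map exists.

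For the converse, suppose $\pi \colon A \to B$ is a bijective, zero-product preserving, linear map. Since $\pi$ is surjective and $A$ is zero-product balanced while $B$ is faithful and both are idempotent, \autoref{prp:WeightedFromBalanced} applies and tells me that $\pi$ is a weighted homomorphism: there exist a homomorphism $\pi_0 \colon A \to B$ and a centralizer $S$ on $B$ with $\pi = S \circ \pi_0$. Moreover, by the ``Further'' clause of \autoref{prp:WeightedEpi} (which is available since the hypotheses of \autoref{prp:WeightedFromBalanced} include those of \autoref{prp:WeightedEpi}), the map $\pi$ is a weighted isomorphism if and only if $\pi_0$ is an isomorphism.

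The key step is therefore to show that the bijectivity of $\pi$ forces $\pi_0$ to be bijective. By \autoref{prp:WeightedEpi}, the centralizer $S$ is always bijective and $\pi_0$ is always surjective in this setting. Since $\pi = S \circ \pi_0$ with $S$ bijective, injectivity of $\pi$ immediately yields injectivity of $\pi_0$; combined with surjectivity, $\pi_0$ is a bijective homomorphism, hence a $K$-algebra isomorphism. This gives $A \cong B$ as $K$-algebras, completing the proof.

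The main obstacle, if any, is purely bookkeeping: I must confirm that all the standing hypotheses of \autoref{prp:WeightedEpi} and \autoref{prp:WeightedFromBalanced} (that $A$ and $B$ are idempotent, that $B$ is faithful, that $A$ is zero-product balanced, and that $\pi$ is surjective) are in force here. All of these are part of the hypotheses of the corollary or follow at once from them, so no additional argument is required; the content is entirely the application of the two preceding theorems.
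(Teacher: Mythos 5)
Your proposal is correct and follows essentially the route the paper intends: the corollary is an immediate consequence of \autoref{prp:WeightedFromBalanced} together with the structural clauses of \autoref{prp:WeightedEpi}. The only (immaterial) variation is that you deduce $A \cong B$ from bijectivity of $S$ and the weighted-isomorphism clause, whereas one can conclude even more directly from the final statement of \autoref{prp:WeightedFromBalanced}: the kernel $I=\{a\in A : \pi(a)=0\}$ is trivial since $\pi$ is injective, so $A \cong A/I \cong B$.
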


\begin{exa}
\label{exa:MatrixAlgIso}
Let $A$ and $B$ be idempotent $K$-algebras, let $n \geq 2$, and assume that $B$ is faithful.
Then $M_n(A)$ and~$B$ are isomorphic as $K$-algebras if and only if there exists a bijective, zero-product preserving, linear map $M_n(A) \to B$.

Indeed, using idempotency of $A$, one easily shows that $M_n(A)$ is idempotent.
Further, $M_n(A)$ is zero-product balanced by \autoref{prp:MatrixAlgBalanced}.
Now the result follows from \autoref{prp:IsoDetByZPStructure}.
\end{exa}

\begin{rmk}
\label{rmk:UnitlWeightedEpi}
Let $\pi\colon A\to B$ be a surjective, semimultiplicative, linear map between $K$-algebras.
Assume that $A$ is \emph{unital}, and that~$B$ is idempotent and faithful.
Then \autoref{prp:WeightedFromBalanced} applies and we obtain a surjective homomorphism $\pi_0\colon A\to B$ and a bijective centralizer $S\colon B\to B$ such that $\pi=S\circ\pi_0$.
Further, $B$ is isomorphic to a quotient of $A$ and thus $B$ is unital.

Let us see that $\pi(1)$ is a central invertible element in $B$, and that $S$ is given by $S(b)=\pi(1)b$ for $b\in B$.
Indeed, by \eqref{eq:WeightedEpi}, $S$ satisfies
\[
S(\pi(a)) = S(\pi(1a)) = \pi(1)\pi(a) \andSep
S(\pi(a)) = S(\pi(a1)) = \pi(a)\pi(1),
\]
for every $a\in A$.
Using that~$\pi$ is surjective, we deduce that $\pi(1)$ is central in $B$, and that $S(b)=\pi(1)b=b\pi(1)$ for every $b\in B$.
Since $S$ is bijective, it follows that~$\pi(1)$ is invertible in $B$. 
\end{rmk}

\section{Commutators and factorizable square-zero elements}
\label{sec:FN2}

This section contains another important application of zero-product balancedness:
A description of the commutator subspace in terms of square-zero elements.
We refer to \cite[Section~9.1]{Bre21BookZeroProdDetermined} for an introduction to the historical context and for analogous results for commutators in zero-product determined algebras.

To obtain our result, we introduce the class of \emph{orthogonally factorizable} square-zero elements;
see \autoref{dfn:FN2}.
We show that in idempotent, zero-product balanced algebras, the subspace generated by these elements agrees with the commutator subspace;
see \autoref{prp:BalancedCommutatorFN2}.
In particular, in idempotent, zero-product balanced rings, every commutator is a sum of (orthogonally factorizable) square-zero elements.

\smallskip

Throughout this section, $K$ denotes a unital, commutative ring.
Our results apply to $K$-algebras, which for $K = \ZZ$ includes the case of rings.

\begin{dfn}
\label{dfn:FN2}
Let $A$ be a $K$-algebra.
We let $N_2(A) := \{ x\in A : x^2=0 \}$ denote the set of square-zero elements.
We further set
\begin{align*}
FN_2(A) &:= \big\{ x\in A : \text{ there exist } y,z \in A \text{ with } x=yz \mbox{ and } zy=0 \big\}.
\end{align*}
We call the members of $FN_2(A)$ \emph{orthogonally factorizable} square-zero elements.

If the context is clear, we simply write $N_2$ for $N_2(A)$ and $FN_2$ for $FN_2(A)$.
\end{dfn}

Given a $K$-algebra $A$, we denote the additive commutator of elements $a,b \in A$ by $[a,b]:=ab-ba$.
We set $[A,A] := \{ [a,b] : a,b \in A \}$, and we let $\linSpan_K [A,A]$ denote the linear subspace of $A$ generated by the commutators.
Note that many authors use $[A,A]$ to denote $\linSpan_K [A,A]$.

\begin{lma}
\label{prp:FN2-RR}
Let $A$ be a $K$-algebra.
Then $FN_2(A) \subseteq [A,A]\cap N_2(A)$.
\end{lma}
\begin{proof}
Let $x\in FN_2(A)$.
Choose $y,z \in A$ with $x=yz$ and $zy=0$.
Then $x = yz-zy = [y,z] \in [A,A]$. 
Also, $x^2 = yzyz = 0$, so $x \in N_2(A)$ as well.
\end{proof}

Recall that a $K$-algebra $A$ is said to be idempotent if $A = \linSpan_K A^2$.

\begin{thm}
\label{prp:BalancedCommutatorFN2}
Let $A$ be a zero-product balanced, idempotent $K$-algebra.
Then
\[
\linSpan_K [A,A]
\ = \ \linSpan_K FN_2(A).
\]
\end{thm}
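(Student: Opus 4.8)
The plan is to prove the two inclusions separately. The inclusion $\linSpan_K FN_2(A) \subseteq \linSpan_K [A,A]$ is immediate from \autoref{prp:FN2-RR}, which gives $FN_2(A) \subseteq [A,A] \subseteq \linSpan_K[A,A]$ at the level of generators; taking spans preserves the containment. So the real content is the reverse inclusion $\linSpan_K [A,A] \subseteq \linSpan_K FN_2(A)$. Since $\linSpan_K[A,A]$ is generated by commutators $[a,b] = ab-ba$, it suffices to show that every such commutator lies in $\linSpan_K FN_2(A)$.

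The key idea is to exploit zero-product balancedness through the characterization in \autoref{prp:CharBalanced}(2): every zero-product preserving bilinear map $\varphi \colon A \times A \to V$ satisfies $\varphi(ab,c) = \varphi(a,bc)$. First I would construct the right universal target. The natural candidate is the quotient $V := A / \linSpan_K FN_2(A)$, with quotient map $q$, and the bilinear map $\varphi(a,b) := q([a,b]) = q(ab - ba)$, or perhaps more usefully $\varphi(a,b) := q(ab)$. The point is to check that one of these is zero-product preserving: if $ab = 0$, then $ba$ is a candidate orthogonally factorizable square-zero element, since with $y = b$, $z = a$ we have $ba = yz$ and $zy = ab = 0$, so $ba \in FN_2(A)$. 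Thus $q(ba) = 0$, and likewise $ab = 0$ gives $q(ab) = 0$ trivially; so the map $\varphi(a,b) := q(ab)$ preserves zero-products, and in fact $\varphi(a,b) - \varphi(b,a) = q(ab) - q(ba) = q([a,b])$, with the second term vanishing whenever $ab=0$.

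Using \autoref{prp:CharBalanced}(2) on a well-chosen $\varphi$ should then yield a relation forcing $q([a,b]) = 0$ for all $a,b$, i.e. $[a,b] \in \linSpan_K FN_2(A)$. Concretely, I expect to run the semimultiplicativity identity $\varphi(ab,c) = \varphi(a,bc)$ to trade factors across the tensor, and combine it with idempotency of $A$ to reduce a general commutator to one involving a product $a = \sum_j b_j c_j$, where each piece can be rearranged into an orthogonally factorizable form modulo $\linSpan_K FN_2(A)$. The role of idempotency is to write every element as a sum of products so that the balancing identity has something to act on; without it, the map $\Phi$ on $\linSpan_K A^2$ need not see all of $A$.

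The hardest part will be pinning down the exact zero-product preserving bilinear map and verifying that applying the balancing identity genuinely lands the commutator inside $\linSpan_K FN_2(A)$ rather than merely inside some larger subspace. The delicate point is that $FN_2(A)$ is cut out by the two-sided condition ($x = yz$ \emph{with} $zy = 0$), so I must ensure that each term produced in the rearrangement actually factors orthogonally, not just that it is a commutator or a square-zero element. I anticipate the computation will show that for $ab = 0$ the element $ba$ is the prototypical generator, and that balancedness propagates this to arbitrary commutators once elements are expanded via idempotency; making that propagation precise, and confirming no extraneous terms survive, is where the care is needed.
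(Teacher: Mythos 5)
Your overall strategy is the paper's: the easy inclusion via \autoref{prp:FN2-RR}, the key observation that $ab=0$ forces $ba \in FN_2(A)$, a zero-product preserving bilinear map into $A/F$ where $F := \linSpan_K FN_2(A)$, and idempotency to expand a general element as a sum of products. But the proposal stops exactly where the proof happens, and the one concrete choice you lean toward is a dead end: the map $\varphi(a,b) := q(ab)$ is useless here, because it satisfies $\varphi(ab,c) = q(abc) = \varphi(a,bc)$ for \emph{every} algebra, by associativity alone --- applying \autoref{prp:CharBalanced}(2) to it produces a tautology and cannot detect anything about commutators. Zero-product balancedness gives information only when fed a map whose zero-product preservation is \emph{not} automatic, and that is precisely where your observation $ab = 0 \Rightarrow ba \in FN_2(A)$ must enter.

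The choice that works is the reversed-multiplication map $\varphi(x,y) := q(yx)$ (the paper's choice), or equivalently your first candidate $q([x,y])$, which differs from it only by the vacuous $q(xy)$. Zero-product preservation holds exactly by your observation. Balancedness then gives, for all $x,y,z \in A$,
\[
q(zxy) = \varphi(xy,z) = \varphi(x,yz) = q(yzx),
\]
that is, $zxy - yzx \in F$. Now take $a,b \in A$ and use idempotency to write $a = \sum_{j=1}^n z_j x_j$; applying the displayed relation with $x = x_j$, $y = b$, $z = z_j$ and summing yields
\[
[a,b] = \sum_{j=1}^n \big( z_j x_j b - b z_j x_j \big) \in F,
\]
which is the missing conclusion. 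So the ingredients you assembled are the right ones and the finish is two lines, but as written you never derive the relation that kills commutators, and the bilinear map you flagged as ``more useful'' never could.
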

\begin{proof}
By \autoref{prp:FN2-RR}, the inclusion `$\supseteq$' holds in general.
The proof of the reverse inclusion is inspired by the proof of \cite[Theorem 9.1]{Bre21BookZeroProdDetermined}.
Set $F := \linSpan_K FN_2$, and consider the biadditive map $\varphi \colon A \times A\to A/F$
given by $\varphi(x,y) := yx+F$ for $x,y\in A$.
Then $\varphi$ preserves zero-products, since if $x,y \in A$ satisfy $xy = 0$, then $yx \in FN_2(A)\subseteq F$, that is, $\varphi(x,y)=0+F$.
Since $A$ is zero-product balanced, for all $x,y,z \in A$ we obtain $\varphi(xy,z) = \varphi(x,yz)$ and hence
\[\tag{5.1}\label{5.1}
zxy-yzx \in F.
\]

Now, given $a,b \in A$, we need to verify $[a,b] \in F$.
Since $A$ is idempotent, we can choose $z_j,x_j \in A$ such that $a = \sum_{j=1}^n z_jx_j$.
Using \eqref{5.1}, we get
\[
[a,b]
= \sum_{j=1}^n (z_jx_jb - bz_jx_j) \in F 
= \linSpan_K FN_2. \qedhere
\]
\end{proof}

\begin{exa}
Let $R$ is a simple ring that contains a nontrivial idempotent.
Then \cite[Theorem~6]{CheLeePuc10CommNilSimpleRings} shows that $[R,R]\subseteq\linSpan_\ZZ N_2$.
We recover and strengthen this result as follows.
As noted in \autoref{exa:IdemGen}, $R$ is generated by its idempotent elements and is therefore zero-product balanced by \autoref{prp:IdemGenBalanced}.
Since every simple ring is idempotent, \autoref{prp:BalancedCommutatorFN2} applies and shows that
\[
\linSpan_\ZZ [R,R]
\ = \ \linSpan_\ZZ FN_2
\ \subseteq \ \linSpan_\ZZ N_2.
\]

In ,  we study the case of more general simple rings.

In \cite{GarThi23pre:SimpleRgsSpecialSqZero}, we study the case of more general \emph{simple} rings, while in \cite{GarThi23pre:ZeroProdRingsCAlgs} arbitrary rings are
studied. This, in particular, applies to C*-algebras, which are one of the main classes
considered in \cite{GarThi23pre:ZeroProdRingsCAlgs}. It would also be interesting to explore aspects of this nature in the
context of $L^p$-operator algebras \cite{Gar21ModernLp}, for example for the well-behaved families arising
from groups \cite{GarThi15GpAlgLp, GarThi22IsoConv} 
or groupoids \cite{GarLup17ReprGrpdLp, ChoGarThi19arX:LpRigidity}.
\end{exa}

\begin{exa}
Let $R$ be a left Rickart ring, that is, assume that for every $x\in R$ there is an idempotent $e\in R$ such that the left annihilator $\{a\in R: ax=0\}$ is equal to the left ideal $Re$.
(For example, this is the case if $R$ is a von Neumann regular ring, or an AW*-algebra, or a von Neumann algebra.)
Let us see that $FN_2=N_2$.

The inclusions $FN_2\subseteq N_2$ is true in general by \autoref{prp:FN2-RR}.
To show the converse inclusion, let $x\in N_2$.
Find an idempotent $e\in R$ such that
\[
Re = \{ a\in R : ax=0 \}.
\]
Since $x^2=0$, we have $x\in Re$.
Choose $r\in R$ such that $x=re$.
Then $xe=ree=re=x$.
Set $f:=1-e$.
Since $e=ee\in Re$, we have $ex=0$, and thus $fx=x$.
Thus, $x=xe$ and $ex=0$, which shows $x\in FN_2$.

Hence, if $R$ is a left Rickart ring that is zero-product balanced, then
\[
\linSpan_\ZZ [R,R] 
\ = \ \linSpan_\ZZ FN_2
\ = \ \linSpan_\ZZ N_2.
\]
\end{exa}

\begin{exa}
Let $A$ be an idempotent $K$-algebra, and let $n\geq 2$.
Given a matrix $a \in M_n(A)$, the following are equivalent:
\begin{enumerate}
\item
$a \in \linSpan_K [M_n(A),M_n(A)]$;
\item
$a \in \linSpan_K FN_2(M_n(A))$;
\item
$\trace(a) \in \linSpan_K [R,R]$.
\end{enumerate}

Indeed, as in \autoref{exa:MatrixAlgIso}, we see that $M_n(A)$ is idempotent and zero-product balanced.
Therefore, the equivalence between~(1) and~(2) follows from \autoref{prp:BalancedCommutatorFN2}.
By \cite[Corollary~17]{Mes06CommutatorRings}, (1) is equivalent to~(3) for arbitrary rings.
\end{exa}

\begin{rmk}
For $n\geq 2$, it is natural to try determine the minimal numbers $N(n)$ and $N_F(n)$ such that every commutator in the matrix ring $M_n(R)$ (with $n\geq 2$) over a unital ring $R$ is a sum of at most $N(n)$ square-zero elements (at most $N_F(n)$ orthogonally factorizable square-zero elements).
It was shown in \cite[Theorem~4.4]{AlaExtVilBreSpe16CommutatorsSquareZero} that every commutator in the matrix algebra over a unital algebra is a sum of at most 22 square-zero elements.
The result also holds for unital rings, and an inspection of the proof shows that the constructed square-zero elements are orthogonally factorizable, and thus $N_F(n)\leq 22$ for all $n\geq 2$.

On the other hand, it was shown in Corollary~3.4 and Theorem~3.6 of \cite{WanWu91SumsSquareZero} that the diagonal matrix in $M_5(\CC)$ with diagonal entries $4,-1,-1,-1,-1$ is not the sum of three square-zero matrices, while it clearly is a commutator:
\[
\begin{pmatrix}
4 & & & & 0 \\
& -1 \\
& & -1 \\
& & & -1 \\
0 & & & & -1
\end{pmatrix}
= \left[
\begin{pmatrix}
0 & 4 & & & 0 \\
& & 3 \\
& & & 2 \\
& & & & 1 \\
0 & & & & 0
\end{pmatrix},
\begin{pmatrix}
0 & & & & 0 \\
1 \\
& 1 \\
& & 1 \\
& & & 1 & 0
\end{pmatrix}
\right]
\]
We thus have $4\leq N(5)\leq N_F(5)\leq 22$.
\end{rmk}

\begin{pbm}
Determine $N(n)$ and $N_F(n)$ for $n\geq 2$.
\end{pbm}

\section{Zero-product balanced algebras without zero divisors}
\label{sec:DivisionAlg}

As always, we will assume that $K$ is a unital, commutative ring.
We start with an elementary observation:

\begin{rmk}
\label{prp:DomainBasic}
Let $A$ be a zero-product balanced $K$-algebra that contains no zero-divisors.
Then
\[
Z_2(A) := \linSpan_K \big\{ u \otimes v \in A \otimes_K A : uv = 0 \big\} 
= \{ 0 \},
\]
and thus $ab \otimes c = a \otimes bc$ in $A \otimes_K A$ for every $a,b,c \in A$.
Further, every $K$-bilinear map $\varphi \colon A \times A \to V$ to a $K$-module automatically preserves zero-products, and thus
\[
\varphi(ab,c) = \varphi(a,bc)
\]
for every $a,b,c \in A$, by \autoref{prp:CharBalanced}.
\end{rmk}

The last statement in the next result is a generalization of \cite[Corollary~1.8]{Bre21BookZeroProdDetermined} to the nonunital setting.

\begin{prp}
\label{prp:DomainOverField}
Let $A$ be a zero-product balanced $K$-algebra that contains no zero-divisors.
Then $A$ is commutative. Moreover, if $K$ is a field, then either $A=\{0\}$ or $A\cong K$.
\end{prp}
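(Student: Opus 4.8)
The plan is to exploit the two consequences recorded in \autoref{prp:DomainBasic}: since $A$ has no zero-divisors we have $Z_2(A) = \{0\}$, so that $ab \otimes c = a \otimes bc$ in $A \otimes_K A$ for all $a,b,c \in A$, and moreover every $K$-bilinear map $\varphi \colon A \times A \to V$ automatically preserves zero-products and hence satisfies $\varphi(ab,c) = \varphi(a,bc)$.

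For commutativity, I would apply the latter to the \emph{flip} map $\varphi \colon A \times A \to A$ given by $\varphi(x,y) := yx$, which is $K$-bilinear. The identity $\varphi(ab,c) = \varphi(a,bc)$ then reads $cab = bca$ for all $a,b,c \in A$. Renaming the variables, this says $pqr = rpq$, that is $(pq)r = r(pq)$, so \emph{every product of two elements is central}. Fixing $a \neq 0$ and testing the centrality of $ab$ against $a$ gives $a(ba) = aba = a^2 b = a(ab)$; cancelling the nonzero $a$ on the left (legitimate since $A$ has no zero-divisors) yields $ba = ab$. Thus $A$ is commutative.

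For the second claim, assume $K$ is a field and $A \neq \{0\}$; the goal is $\dim_K A \leq 1$. Combining $ab \otimes c = a \otimes bc$ with commutativity, for a fixed nonzero $a$ and arbitrary $x,y$ one obtains $ax \otimes y = a \otimes xy = a \otimes yx = ay \otimes x$. Now fix also some nonzero $y$ and expand in a $K$-basis $\{e_i\}$ of $A$: writing $\alpha,\beta,\xi,\eta$ for the coordinate vectors of $ay,\,ax,\,x,\,y$ respectively, the equality $ay \otimes x = ax \otimes y$ becomes $\alpha_i \xi_j = \beta_i \eta_j$ for all $i,j$. Since $ay \neq 0$, the vector $\alpha$ is nonzero, and comparing an index $i_0$ with $\alpha_{i_0} \neq 0$ forces $\xi$ to be a scalar multiple of $\eta$, i.e. $x \in Ky$. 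Hence $A = Ky$ and $\dim_K A \leq 1$. Finally, a one-dimensional domain $A = Ka_0$ satisfies $a_0^2 = \lambda a_0$ with $\lambda \neq 0$ (otherwise $a_0$ would be a zero-divisor), so $e := \lambda^{-1} a_0$ is a nonzero idempotent generating $A$, and $k \mapsto ke$ is a $K$-algebra isomorphism $K \xrightarrow{\cong} A$.

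The commutativity deduction is routine once the flip map is introduced; the step demanding the most care is the dimension count in the second claim, where one must avoid assuming that $A$ is unital and must accommodate a possibly infinite basis. This is precisely why I would argue via explicit coordinates and the resulting equality of outer products, rather than through $\dim_K(A \otimes_K A) = (\dim_K A)^2$, which would only resolve the finite-dimensional case.
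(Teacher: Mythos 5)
Your proof is correct, and while it rests on the same foundation as the paper's -- namely \autoref{prp:DomainBasic}, i.e.\ that $Z_2(A)=\{0\}$ forces every $K$-bilinear map to satisfy $\varphi(ab,c)=\varphi(a,bc)$ and the identity $ab\otimes c = a\otimes bc$ to hold on the nose -- the implementation differs in both halves. For commutativity, the paper applies the map $(r,s)\mapsto rbs$ (for fixed $b$) to get $xaby=xbay$ and cancels $x$ and $y$ on both sides, whereas you apply the flip $(x,y)\mapsto yx$ to conclude that every product of two elements is central and then cancel a single factor; amusingly, your flip-map computation is exactly the one the paper uses in the \emph{remark following} \autoref{prp:DomainOverField}, in a more general setting. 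For one-dimensionality, the paper chooses a linear functional $\varphi$ with $\varphi(bc)\neq 0$ and $\varphi(ca)\neq 0$ and applies the maps $(x,y)\mapsto x\varphi(y)$ and $(x,y)\mapsto \varphi(x)y$ to obtain $a=\varphi(bc)^{-1}\varphi(ca)\,b$ directly, while you compare coefficients of the identity $ax\otimes y = ay\otimes x$ with respect to a basis of $A\otimes_K A$. These are dual presentations of the same idea -- your coefficient comparison amounts to pairing the tensor identity with the coordinate functionals $e_i^\ast\otimes e_j^\ast$ -- but your version sidesteps a small point the paper glosses over (that a single functional can be chosen nonvanishing at both $bc$ and $ca$, which uses the fact that a vector space over any field is not the union of two proper subspaces), at the cost of invoking a basis and of using commutativity as an input to the second half, which the paper's argument does not need.
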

\begin{proof}
We may assume that $A\neq\{0\}$.
To show that $A$ is commutative, let $a,b \in A$.
Choose nonzero elements $x,y \in A$.
Applying \autoref{prp:DomainBasic} for the biadditive map $\varphi \colon R\times R\to R$, $(r,s)\mapsto rbs$, we obtain 
\[
xaby
= \varphi(xa,y)
= \varphi(x,ay)
= xbay.
\]
Hence, $x(ab-ba)y=0$, and since $A$ has no zero divisors, we deduce that $ab=ba$, as desired.

Assume now that $K$ is a field and that $A\neq \{0\}$. 
We will show that every two nonzero elements of $A$ are scalar multiples of each other, which readily implies the conclusion.
Let $a,b \in A$ be nonzero, and choose any other nonzero element $c \in A$.
Then $bc \neq 0$ and $ca \neq 0$, which allows us to choose a $K$-linear functional $\varphi \colon A \to K$ such that $\varphi(bc) \neq 0$ and $\varphi(ca) \neq 0$.

Applying \autoref{prp:DomainBasic} to the $K$-bilinear map $A \times A \to A$ given by $(x,y) \mapsto x\varphi(y)$, at the first step, and for the map $A \times A \to A$ given by $(x,y) \mapsto \varphi(x)y$, at the third step, we get
\[
a\varphi(bc)
= ab\varphi(c)
= \varphi(c)ab
= \varphi(ca)b,
\]
Hence, $a=\varphi(bc)^{-1}\varphi(ca)b$, as desired.
\end{proof}

\begin{rmk}
Let $A$ be a $K$-algebra such that
\[
ab \otimes c = a \otimes bc
\]
in $A \otimes_K A$ for every $a,b,c \in A$.
Applying \autoref{prp:DomainBasic} for the biadditive map $\varphi \colon R\times R\to R$, $(r,s)\mapsto sr$, we obtain 
\[
abx
= \varphi(bx,a)
= \varphi(b,xa)
= xab
\]
for all $a,b,c \in A$.
Hence, every product of two elements belongs to the center $Z(A)$, and it follows that $Z(A)$ is an ideal in $A$ such that the quotient $A/Z(A)$ has zero multiplication.
Further, the product of a commutator with the product of any two elements is zero, and it follows that the commutator ideal in $A$ has zero multiplication.

Does it follow that $A$ is commutative?
The answer is `Yes' if $A$ is faithful (for example, if it is unital or has no zero divisors), or idempotent, or has no nilpotent ideal (for example, if it is semiprime), or if $K$ is a field (\autoref{prp:DomainOverField}).
\end{rmk}

\begin{qst}
Does there exist a noncommutative ring $R$ such that $ab \otimes c = a \otimes bc$ in $R \otimes_\ZZ R$ for all $a,b,c \in R$?
\end{qst}

Following \cite{BouKan72Core}, we say that a unital, commutative ring $R$ is \emph{solid} if $x\otimes 1=1\otimes x$ in $R\otimes_\ZZ R$ for every $x\in R$.
It follows immediately that every solid ring is zero-product balanced.
Typical examples of solid rings are the finite cyclic rings $\ZZ/n\ZZ$ for $n\geq 2$, and the unital subrings of the field $\QQ$ of rationals.
The classification of solid rings from \cite[Proposition~3.5]{BouKan72Core} shows that every solid ring can be constructed from these typical examples.

\begin{prp}
\label{prp:CharDomainZPBRing}
Let $R$ be a unital ring without zero divisors.
Then $R$ is zero-product balanced as a ring if and only if it is a (commutative) solid ring.
\end{prp}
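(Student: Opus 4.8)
The plan is to prove the equivalence in \autoref{prp:CharDomainZPBRing} by establishing both implications, using \autoref{prp:DomainOverField} for the forward direction and the definition of solidity together with \autoref{prp:DomainBasic} for the converse.

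First I would handle the easy implication: if $R$ is a solid ring, then by definition $x \otimes 1 = 1 \otimes x$ in $R \otimes_\ZZ R$ for every $x \in R$, and as observed in the paragraph preceding the statement, this immediately makes $R$ zero-product balanced. (Indeed, for any $a,b,c \in R$ one computes $ab \otimes c = (ab)\cdot(1 \otimes c) = \ldots$, reducing $ab \otimes c - a \otimes bc$ to a consequence of solidity and hence into $Z_2(R) = \{0\}$.) So this direction is essentially a restatement.

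For the forward implication, suppose $R$ is a unital ring without zero divisors that is zero-product balanced as a ring. The first step is to invoke \autoref{prp:DomainOverField} with $K = \ZZ$ to conclude that $R$ is commutative; this already upgrades $R$ to a unital, commutative domain. It then remains to show $R$ is solid, i.e.\ that $x \otimes 1 = 1 \otimes x$ in $R \otimes_\ZZ R$ for every $x \in R$. Here I would use \autoref{prp:DomainBasic}, which tells us that in a zero-product balanced domain, $ab \otimes c = a \otimes bc$ holds in $R \otimes_\ZZ R$ for all $a,b,c$. Taking $a = x$, $b = 1$, $c = 1$ gives $x \otimes 1 = x\cdot 1 \otimes 1$, which is trivial; the useful substitution is $a = 1$, $b = x$, $c = 1$, yielding $x \otimes 1 = 1 \otimes x$ directly. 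This is exactly the solidity condition.

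The main obstacle, such as it is, lies in being careful about the role of the coefficient ring: solidity is defined over $\ZZ$, so one must apply \autoref{prp:DomainBasic} and \autoref{prp:DomainOverField} with $K = \ZZ$, treating $R$ as a ring rather than as an algebra over a larger field. Once the identity $ab \otimes c = a \otimes bc$ in $R \otimes_\ZZ R$ is in hand, deriving $x \otimes 1 = 1 \otimes x$ by the substitution $(a,b,c) = (1,x,1)$ is immediate, so there is no genuine difficulty beyond assembling the already-proven results in the right order.
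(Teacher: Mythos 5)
Your proposal is correct and follows essentially the same route as the paper's proof: both directions rest on the observation that $Z_2(R)=\{0\}$ reduces zero-product balancedness to the identity $ab\otimes c = a\otimes bc$ in $R\otimes_\ZZ R$, with the substitution $(a,b,c)=(1,x,1)$ giving solidity, and \autoref{prp:DomainOverField} (over $K=\ZZ$) supplying the commutativity needed for $R$ to qualify as solid. The paper's write-up is just a more compressed version of exactly this argument.
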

\begin{proof}
Using that $Z_2(R)=\{0\}$, we see that $R$ is zero-product balanced as a ring if and only if $ab\otimes c=a\otimes bc$ for all $a,b,c\in R$.
Using that $R$ is unital, this is equivalent to $x\otimes 1=1\otimes x$ for all $x\in R$.
By definition, this holds if $R$ is solid, which proves the backward direction.
Conversely, assume that $R$ is zero-product balanced.
Then $R$ is commutative by \autoref{prp:DomainOverField}, and thus a solid ring.
\end{proof}

\begin{exa}
\label{exa:CnotZPB}
While the field $\QQ$ is zero-product balanced as a ring, the fields $\mathbb{R}$ and $\mathbb{C}$ are not.
Indeed, $\RR$ and $\CC$ are not solid, since every torsion-free, solid ring is a subring of $\QQ$ by \cite[Lemma~3.7]{BouKan72Core}. On the other hand,
the ring $2\ZZ$ of even integers is a nonunital, zero-product balanced ring without zero divisors.
\end{exa}

\section{Commutative, zero-product balanced algebras}
\label{sec:Commutative}

In this section, we specialize to the case that $K$ is a field.
We prove the main result of the paper:
A commutative $K$-algebra over a field $K$ is reduced and zero-product balanced if and only if it is generated by idempotents;
see \autoref{prp:SemiprimeCommutative}.
This is in the same spirit as the result of Bre\v{s}ar, \cite[Theorem~3.7]{Bre16FinDimZeroProdDet}, that a \emph{unital}, finite-dimensionl $K$-algebra is zero-product balanced (equivalently, zero-product determined) if and only if it is generated by idempotents.
We deduce that every commutative, zero-product balanced algebra is spanned by its idempotent and nilpotent elements;
see \autoref{prp:CommutativeSpanned}.

\smallskip

Let $R$ be a ring (not necessarily unital or commtuative).
A right ideal $I \subseteq R$ is \emph{regular} if there exists $a \in R$ such that $ax - x \in I$ for all $x \in R$. 
The \emph{Jacobson radical} of $R$, denoted $\rad(R)$, is the intersection of all maximal regular right ideals of $R$.
If $R$ is unital, then every right ideal is regular.
If $R$ is commutative, then regular right ideals are precisely the (two-sided) ideals $I \subseteq R$ such that $R/I$ is unital, and it follows that $\rad(R)$ is the intersection of all ideals $I \subseteq R$ such that $R/I$ is a field.
The ring $R$ is \emph{semiprimitive} (or \emph{Jacobson semisimple}) if $\rad(R)=\{0\}$.
For details we refer to \cite[Section~I.2]{Her94NCRings}.

The \emph{prime radical} (also called \emph{lower nilradical}) of $A$, denoted $\nil(R)$, is the intersection of all prime ideals of $R$.
If $R$ is commtuative, then $\nil(R)$ is the set of nilpotent elements in $R$.
The ring $R$ is \emph{semiprime} if $\nil(R)=\{0\}$.
Note that $\nil(R) \subseteq \rad(R)$.

A ring is said to be \emph{reduced} if it contains no nonzero nilpotent elements.
A commutative ring is reduced if and only if it is semiprime.

\begin{lma}
\label{prp:Radicals}
Let $A$ be a commutative, zero-product balanced $K$-algebra over a field~$K$.
Then $A/I \cong K$ for every proper prime ideal $I \subseteq A$.

Thus, every proper prime ideal of $A$ is maximal regular, and $\nil(A) = \rad(A)$.
\end{lma}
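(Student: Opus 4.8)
The plan is to reduce everything to the earlier results on quotients and on zero-product balanced algebras without zero-divisors. First I would fix a proper prime ideal $I \subseteq A$ and pass to the quotient. Since $A$ is commutative and $I$ is prime, the quotient $A/I$ is a commutative ring without zero-divisors. By \autoref{prp:BalancedQuotient}, $A/I$ is again zero-product balanced. Hence $A/I$ is a zero-product balanced $K$-algebra without zero-divisors, and because $K$ is a field, \autoref{prp:DomainOverField} shows that $A/I = \{0\}$ or $A/I \cong K$. As $I$ is proper, $A/I \neq \{0\}$, and therefore $A/I \cong K$, which is the first assertion.

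For the remaining statements I would invoke the characterization of regular ideals in the commutative case recalled just before the lemma: an ideal $J \subseteq A$ is maximal regular if and only if $A/J$ is a field. Since $A/I \cong K$ is a field for every proper prime ideal $I$, each such $I$ is maximal regular. Conversely, if $J$ is maximal regular then $A/J$ is a field, hence an integral domain, so $J$ is prime. Thus the proper prime ideals of $A$ coincide exactly with the maximal regular ideals.

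Finally, to obtain $\nil(A) = \rad(A)$, recall that $\nil(A)$ is the intersection of all (proper) prime ideals, that $\rad(A)$ is the intersection of all maximal regular ideals, and that $\nil(A) \subseteq \rad(A)$ holds in general. Having matched the two families of ideals, their intersections agree, which gives the reverse inclusion and hence equality. I do not anticipate a genuine difficulty: the argument is essentially assembling the pieces once one recognizes that \autoref{prp:DomainOverField} applies to $A/I$. The only points needing care are verifying that primeness of $I$ yields a domain quotient, so that \autoref{prp:DomainOverField} is applicable, and correctly pairing prime ideals with maximal regular ideals via the ``quotient is a field'' criterion before comparing the two radicals.
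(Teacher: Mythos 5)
Your proof is correct and takes essentially the same route as the paper: pass to $A/I$, apply \autoref{prp:BalancedQuotient} and \autoref{prp:DomainOverField} to conclude $A/I \cong K$, and deduce that $I$ is maximal regular. The only difference is that you spell out the derivation of $\nil(A) = \rad(A)$ (via the correspondence between proper prime ideals and maximal regular ideals), a step the paper leaves implicit after recalling the relevant definitions.
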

\begin{proof}
Let $I\subseteq A$ be a proper prime ideal in $A$.
By \autoref{prp:BalancedQuotient}, the quotient $A/I$ is a zero-product balanced $K$-algebra.
Since it is also prime and commutative, and therefore has no zero divisors, it follows that $A/I \cong K$ by \autoref{prp:DomainOverField}.
Thus, $A/I$ is unital and simple, and therefore $I$ is a maximal regular ideal.
\end{proof}

The next result generalizes \cite[Example~2.24]{Bre21BookZeroProdDetermined}.

\begin{prp}
\label{prp:InfiniteDistinct}
Let $K$ be a field, and let $A$ be a (not necessarily unital) subalgebra of $\prod_\NN K$.
If $A$ contains an element whose components are all pairwise distinct, then~$A$ is not zero-product balanced as a $K$-algebra.
\end{prp}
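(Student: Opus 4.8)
The plan is to produce, for the distinguished element $a \in A$ whose components are pairwise distinct, a single explicit witness to the failure of zero-product balancedness. Taking $b = c = a$ in \autoref{dfn:Balanced}, it suffices to show that
\[
a^2 \otimes a - a \otimes a^2 \notin Z_2(A).
\]
I would argue by contradiction, assuming a representation $a^2 \otimes a - a \otimes a^2 = \sum_{k=1}^N u_k \otimes v_k$ with $u_k, v_k \in A$ and $u_k v_k = 0$ for each $k$ (the scalars from the definition of $Z_2(A)$ can be absorbed into the $u_k$).

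Next I would extract scalar information by testing against coordinate projections. Writing $\varepsilon_i \colon A \to K$ for the $i$-th coordinate functional (the restriction to $A$ of the projection $\prod_\NN K \to K$), each $\varepsilon_i \otimes \varepsilon_j \colon A \otimes_K A \to K$ is a well-defined $K$-linear map. Applying it to the assumed identity and writing $a = (a_i)_{i \in \NN}$ gives, for all $i,j \in \NN$,
\[
a_i a_j (a_i - a_j) = \sum_{k=1}^N (u_k)_i (v_k)_j .
\]
Moreover, the relation $u_k v_k = 0$ in $\prod_\NN K$ says precisely that $\mathrm{supp}(u_k) \cap \mathrm{supp}(v_k) = \emptyset$ for every $k$.

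The heart of the argument is a finite pigeonhole on support patterns, and this is the step I expect to be the main obstacle conceptually. To each index $\ell \in \NN$ I would associate its \emph{type}, the tuple recording, for every $k$, whether $\ell \in \mathrm{supp}(u_k)$ and whether $\ell \in \mathrm{supp}(v_k)$; there are at most $4^N$ possible types. Since the components of $a$ are pairwise distinct, at most one of them can vanish, so infinitely many indices $\ell$ satisfy $a_\ell \neq 0$. By pigeonhole, two distinct such indices $i \neq j$ share the same type. For this pair the left-hand side $a_i a_j (a_i - a_j)$ is nonzero, using $a_i, a_j \neq 0$ together with $a_i \neq a_j$, whereas every summand on the right vanishes: if $(u_k)_i (v_k)_j \neq 0$, then $i \in \mathrm{supp}(u_k)$ and $j \in \mathrm{supp}(v_k)$, and equality of types forces $j \in \mathrm{supp}(u_k) \cap \mathrm{supp}(v_k) = \emptyset$, a contradiction. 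Hence the right-hand side is $0$ while the left-hand side is not, contradicting the displayed identity.

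I would close by noting that the only properties used are the pairwise distinctness of the components of $a$ (which guarantees both $a_i \neq a_j$ for $i \neq j$ and at most one vanishing component) and the finiteness of any representation in $Z_2(A)$. The combinatorial core is exactly that finitely many pairwise-disjoint support patterns cannot separate all ordered pairs of indices drawn from an infinite set, and it is this use of infiniteness that makes the conclusion fail for the analogous finite-dimensional subalgebras of $\prod_\NN K$.
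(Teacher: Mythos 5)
Your proof is correct, and it reaches the conclusion by a genuinely more elementary route than the paper. The paper uses the same witness tensor $y^2\otimes y - y\otimes y^2$ and also argues by contradiction from a finite representation $\sum_k u_k\otimes v_k$ with $u_kv_k=0$, but it then follows Bre\v{s}ar's Example~2.24: a pigeonhole on support patterns produces an \emph{infinite} index set $N$ such that each $u_k$ or $v_k$ lies in the ideal $I$ of elements vanishing on $N$; after regrouping the tensors, one checks that $y^2, y, s_1,\ldots,s_l$ are linearly independent, chooses a functional $\varphi$ vanishing on $y,s_1,\ldots,s_l$ with $\varphi(y^2)=1$, and applies the bilinear map $(x,z)\mapsto\varphi(x)z$ to conclude $y\in I$, contradicting that $y$ has at most one zero component. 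You instead evaluate the assumed identity against the coordinate functionals $\varepsilon_i\otimes\varepsilon_j$, which turns the problem into the scalar identities $a_ia_j(a_i-a_j)=\sum_k (u_k)_i(v_k)_j$, and then you need only \emph{two} indices of the same support type (rather than an infinite homogeneous set) to contradict one of these identities directly. What your version buys is economy and self-containment: no regrouping of tensors, no linear-independence verification, no choice of functional, and no appeal to an external example --- the disjointness of $\mathrm{supp}(u_k)$ and $\mathrm{supp}(v_k)$ plus a finite pigeonhole does all the work. What the paper's version buys is alignment with the established template from \cite{Bre21BookZeroProdDetermined}, which it explicitly generalizes from the unital to the nonunital setting; but as a proof of this proposition, your argument is tighter, and both rest on the same combinatorial core, namely that finitely many disjoint-support pairs cannot accommodate infinitely many coordinates.
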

\begin{proof}
The proof is based on that of \cite[Example~2.24]{Bre21BookZeroProdDetermined},.
We view elements of~$A$ as sequences $(a_n)_n$ with $a_n \in K$.
By assumption, there exists $y = (y_n)_n \in A$ such that the elements $y_n$ are pairwise distinct.
To reach a contradiction, we assume that $A$ is zero-product balanced.
By definition, we deduce that $y^2 \otimes y - y \otimes y^2$ belongs to $Z_2(A)$, that is, there exist $u_k, v_k \in A$ such that 
\[
y^2 \otimes y - y \otimes y^2 = \sum_{k=1}^n u_k \otimes v_k,
\]
and such that $u_kv_k = 0$ for each $k = 1, \ldots, n$.
As in the proof of \cite[Example~2.24]{Bre21BookZeroProdDetermined}, there is an infinite subset $N \subseteq \NN$ such that, with 
\[
I := \big\{ a \in A : a_n = 0 \text{ for $n \in N$} \big\}
\]
we have $u_k \in I$ or $v_k \in I$ for each $k = 1, \ldots, n$.
Grouping the tensors $u_k \otimes v_k$ according to whether $u_k \in I$ or $v_k \in I$, we rewrite
\begin{equation}\tag{7.1}
\label{eq:InfiniteDistinct}
y^2 \otimes y - y \otimes y^2 + \sum_{i=1}^l s_i \otimes z_i = \sum_{j=1}^m w_j \otimes t_j
\end{equation}
for $s_i, t_j \in I$ and $z_i, w_j \in A$.
We may assume that the elements $s_1, \ldots, s_l$ are linearly independent.
To show that the elements $y^2, y, s_1, \ldots, s_l$ are linearly independent, assume that $\lambda,\mu,\kappa_1,\ldots,\kappa_l \in K$ satisfy
\[
0 = \lambda y^2 + \mu y + \sum_{i=1}^l \kappa_i s_i.
\]
Considering indices in $s,t \in N$ where $y_s$ and $y_t$ are nonzero and distinct, we obtain $\lambda = \nu = 0$, and then also $\kappa_1 = \ldots = \kappa_l = 0$.

Let $\varphi \colon A \to K$ be a $K$-linear map vanishing on $y, s_1, \ldots, s_l$, and such that $\varphi(y^2) = 1$, which exists because $\{y^2, y,s_1,\ldots,s_l\}$
is linearly independent.
Applying the $K$-bilinear map $A \times A \to A$ given by $(x,y) \mapsto \varphi(x)y$, to \eqref{eq:InfiniteDistinct}, we obtain that
\[
y = \sum_{j=1}^m \varphi(w_j)t_j \in I,
\]
which is the desired contradiction.
\end{proof}

Let us briefly recall the Stone duality between Boolean rings and Boolean spaces.
A \emph{Boolean ring} is a ring such that every element is idempotent.
A \emph{Boolean space} is a zero-dimensional, locally compact, Hausdorff space.
Given a Boolean ring~$R$, the set of nonzero ring homomorphisms $R \to \{0,1\}$ can be equipped with a natural topology turning it into a Boolean space $X_R$.

Conversely, given a Boolean space $X$, the space $R_X:=C_c(X,\{0,1\})$ of continuous functions $X \to \{0,1\}$ that have compact support is naturally a Boolean ring.
Moreover, there are natural isomorphisms $R \cong R_{X_R}$ and $X \cong X_{R_X}$, and this even induces an equivalence of categories for suitable choices of morphisms.

We note that this duality restricts to a duality between \emph{unital} Boolean rings and \emph{compact} Boolean spaces.

Let $R$ be a commutative ring.
We turn the set $\Idem(R)$ of idempotent elements in $R$ into a Boolean ring by using the same multiplication as in $R$, but by defining a new addition $\bar{+}$ by $e\bar{+}f := e+f-2ef$.

We write $K_d$ for the field $K$ equipped with the discrete topology.
Given a locally compact, Hausdorff space $X$, we use $C_c(X,K_d)$ to denote the algebra of continuous functions $f \colon X \to K_d$ that have compact support.

Let $A$ be a commutative $K$-algebra.
We say that two idempotents $e,f \in A$ are orthogonal if $ef = 0$.
Using that the product of two idempotents in $A$ is again an idempotent, it follows that $A$ is generated by idempotents as a $K$-algebra if and only if it is the $K$-linear span of its idempotents.
We simply say that $A$ is generated by idempotents.
If this is the case, then every element $a \in A$ can be written as $a = \sum_{j=1}^n \lambda_j e_j$ for nonzero, pairwise different coefficients $\lambda_j \in K$ and pairwise orthogonal idempotents $e_1, \ldots, e_n$ in $A$.
Moreover, this presentation is unique up to permutation of indices.

\begin{thm}
\label{prp:SemiprimeCommutative}
Let $A$ be a commutative $K$-algebra over a field $K$.
Then the following are equivalent:
\begin{enumerate}
\item
$A$ is reduced and zero-product balanced as a $K$-algebra;
\item
$A$ is generated by idempotents as a $K$-algebra;
\item
$A \cong C_c(X,K_d)$ for a Boolean space~$X$.
\end{enumerate}   
Moreover, if these statements are satisfied, then the space $X$ in~(3) is unique (up to homeomorphism) and $X \cong X_{\Idem(R)}$, the Boolean space associated to $\Idem(R)$.
\end{thm}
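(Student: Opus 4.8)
The plan is to prove the equivalence of (1), (2), and (3) by establishing the cycle $(2)\Rightarrow(1)$, $(2)\Leftrightarrow(3)$, and the crucial $(1)\Rightarrow(2)$, and then to verify the uniqueness statement. For $(2)\Rightarrow(1)$, note that if $A$ is generated by idempotents then it is zero-product balanced by \autoref{prp:IdemGenBalanced} (every idempotent of $A$ lies in $M(A)$), and it is reduced because a $K$-linear combination of pairwise orthogonal idempotents squares to a combination with the same support, so it is nilpotent only if it is zero. For $(2)\Leftrightarrow(3)$, I would invoke Stone duality as recalled just before the theorem: if $A$ is generated by idempotents, then by the unique-presentation remark every element is a finite $K$-linear combination of pairwise orthogonal idempotents, and the Boolean ring $\Idem(A)$ determines a Boolean space $X := X_{\Idem(A)}$; one then checks that $A \cong C_c(X,K_d)$ by sending the idempotent corresponding to a compact-open set to its indicator function and extending $K$-linearly. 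Conversely $C_c(X,K_d)$ is manifestly spanned by indicator functions of compact-open sets, which are idempotents.

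The heart of the matter, and the step I expect to be the main obstacle, is $(1)\Rightarrow(2)$: showing that a reduced, zero-product balanced commutative $K$-algebra is generated by its idempotents. By \autoref{prp:Radicals}, since $A$ is reduced (hence semiprime, so $\nil(A)=\{0\}=\rad(A)$), every proper prime ideal $I$ satisfies $A/I\cong K$. This gives a large supply of characters. The strategy is to realize $A$ as an algebra of $K_d$-valued functions on its prime (equivalently, character) spectrum $X$: each $a\in A$ induces the function $x\mapsto \chi_x(a)\in K$, where $\chi_x\colon A\to A/I_x\cong K$ is the character attached to the point $x$. Since $A$ is semiprime, the intersection of all prime ideals is zero, so this evaluation map $A \to \prod_{x\in X} K$ is injective. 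The key obstruction to overcome is proving that every element has \emph{finite range} — that is, each $a\in A$ takes only finitely many distinct values as a function on $X$. This is exactly where \autoref{prp:InfiniteDistinct} enters: if some $a\in A$ took infinitely many distinct values, one could produce a subalgebra of $\prod_\NN K$ containing an element with pairwise-distinct components, contradicting zero-product balancedness (which passes to such subalgebras via the quotient/restriction machinery of \autoref{prp:BalancedQuotient} together with the hereditary behaviour already recorded).

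Once finite range is established, the argument concludes quickly. For a fixed $a$ with distinct values $\lambda_1,\dots,\lambda_n\in K$, the preimages $U_j := \{x : \chi_x(a)=\lambda_j\}$ partition $X$, and each characteristic function $e_j = \mathbf{1}_{U_j}$ must itself lie in $A$: indeed $e_j$ can be recovered from $a$ by a polynomial (Lagrange interpolation) that sends $\lambda_j\mapsto 1$ and $\lambda_i\mapsto 0$ for $i\ne j$, using that distinct field elements are separated and $a$ generates a finite-dimensional commutative subalgebra isomorphic to $\prod_j K$ by reducedness. Then $a = \sum_{j=1}^n \lambda_j e_j$ exhibits $a$ as a $K$-linear combination of idempotents, proving (2). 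Finally, for the uniqueness clause, the homeomorphism type of $X$ in (3) is forced because $X\cong X_{\Idem(A)}$ is recovered functorially from the Boolean ring $\Idem(A)$ via Stone duality, and $\Idem(A)\cong\Idem(C_c(X,K_d))\cong C_c(X,\{0,1\})$ depends only on $A$; hence any two Boolean spaces realizing $A$ as in (3) have isomorphic Boolean rings of compact-open sets and are therefore homeomorphic.
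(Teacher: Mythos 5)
Your proposal follows essentially the same route as the paper's proof: (2)$\Rightarrow$(1) via \autoref{prp:IdemGenBalanced} plus the orthogonal-idempotent computation; (1)$\Rightarrow$(2) by embedding $A$ into $\prod_{x\in X}K$ over the prime/character spectrum using \autoref{prp:Radicals} and semiprimeness, proving finite range of each element via \autoref{prp:InfiniteDistinct} combined with \autoref{prp:BalancedQuotient}, and then producing idempotents by interpolation polynomials; (2)$\Leftrightarrow$(3) and the uniqueness clause via Stone duality. All of these steps are the ones the paper takes.

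However, there is one local error in your interpolation step, and it matters precisely in the nonunital setting the theorem is designed to cover. You let $\lambda_1,\dots,\lambda_n$ be \emph{all} distinct values of $a$ and claim that every $e_j=\mathbf{1}_{U_j}$ lies in $A$, justified by the assertion that $a$ generates a subalgebra isomorphic to $\prod_j K$. If $0\in\sigma(a)$ --- which is automatic whenever $A\cong C_c(X,K_d)$ with $X$ noncompact --- this fails: the indicator of the zero set $\{x: a(x)=0\}$ typically has noncompact support and does not belong to $A$; the Lagrange polynomial for that value must send $0\mapsto 1$, so it has a nonzero constant term and cannot even be evaluated at an element of a nonunital algebra; and the subalgebra generated by $a$ is then only $K^m$ with $m$ the number of \emph{nonzero} values. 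The paper avoids this by enumerating only the nonzero elements of $\sigma(a)$ and using the constant-term-free polynomials
\[
e_j=\Bigl(\lambda_j\prod_{k\neq j}(\lambda_j-\lambda_k)\Bigr)^{-1} a\prod_{k\neq j}(a-\lambda_k),
\]
where the extra factor $a$ forces $0\mapsto 0$, so that the expression makes sense without a unit. Since the value $0$ contributes nothing to the sum $a=\sum_j\lambda_j e_j$, your argument is repaired simply by discarding it; with that modification your proof is correct and coincides with the paper's.
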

\begin{proof}
To show that~(2) implies~(1), assume that $A$ is generated by idempotents.
Then $A$ is zero-product balanced by \autoref{prp:IdemGenBalanced}.
Next, let $a \in A$ satisfy $a^2 = 0$.
Choose nonzero, pairwise different $\lambda_j \in K$ and pairwise orthogonal idempotents $e_1, \ldots, e_n \in A$ such that $a = \sum_{j=1}^n \lambda_j e_j$.
One readily gets $0=a^2 = \sum_{j=1}^n \lambda_j^2 e_j$, and thus 
$\lambda_1=\ldots=\lambda_n=0$, which implies $a=0$.
It follows that $\nil(A)=\{0\}$, and so $A$ is reduced.

To show that~(1) implies~(2), assume that $A$ is reduced and zero-product balanced.
Let $X$ denote the set of prime ideals in $A$.
By \autoref{prp:Radicals}, for each $x\in X$ we obtain a surjective ring homomorphism $\pi_x \colon A \to K$ with kernel $x$.
Given $a \in A$, we write $a(x) := \pi_x(a) \in K$.
Since $A$ is reduced and commutative, it is semiprime, and so the intersection of all prime ideals is zero.
It follows that the map $\pi \colon A \to \prod_{x \in X} K$, given by 
$\pi(a) := (a(x))_{x \in X}$, is an injective ring homomorphism.

Given $a \in A$, we set
\[
\sigma(a) := \big\{ a(x) : x\in X \big\}.
\]

We claim that $\sigma(a)$ is finite.
To obtain a contradiction, assume that there exists a sequence $(x_n)_n$ in $X$ such that the elements $a(x_n)$ are pairwise distinct.
Then the composition of $\pi$ with the canonical map $\prod_X K\to \prod_\NN K$ 
is a ring homomorphism whose image is a subring of $\prod_\NN K$ that contains an element whose components are pairwise distinct.
By \autoref{prp:InfiniteDistinct}, $\pi(A)$ is not zero-product balanced, which contradicts \autoref{prp:BalancedQuotient}. This proves the claim.

To show that $A$ is generated by idempotents, let $a \in A$ with $a \neq 0$.
Let $\lambda_1, \ldots, \lambda_n$ be an enumeration of the nonzero elements in $\sigma(a)$.
For each $j = 1, \ldots, n$ set $F_j := \{ x\in X : a(x) = \lambda_j \}$, and let $f_j \in \prod_{x\in X} K$ be the indicator function of~$F_j$, that is, $f_j = ((f_j)_x)_{x\in X}$ with $(f_j)_x = 1$ if $x \in F_j$ and $(f_j)_x = 0$ otherwise.
Then $\pi(a) = \sum_{j=1}^n \lambda_j f_j$, and it suffices to show that the idempotents $f_1, \ldots, f_n$ belong to~$\pi(A)$.

Fix $j \in \{1, \ldots, n\}$.
Then $\lambda_j \prod_{k \neq j} (\lambda_j - \lambda_k) \in K$ is nonzero, and we can define
\[
e_j := \left(  \lambda_j \prod_{k \neq j} (\lambda_j - \lambda_k)  \right)^{-1} a \prod_{k \neq j} (a - \lambda_k) \in A,
\]
where $a \prod_{k \neq j} (a - \lambda_k)$ is understood as $\sum_{S\subseteq \{1, \ldots, n\} \setminus \{j\}} (\prod_{k\in S} \lambda_k) a^{n-|S|}$. Note that
$e_j$ is an idempotent.
Evaluating at each $x \in X$, we see that $\pi(e_j) = f_j$.
We have shown that  $a = \sum_{j=1}^n \lambda_j e_j$ for idempotents $e_1, \ldots, e_n \in A$.

To show that~(3) implies~(2), we just notice that $C_c(X,K_d)$ is generated by idempotents.
To show the converse, assume that $A$ is generated by idempotents.
Set $X := X_{\Idem(A)}$.
Given an idempotent $e \in A$, let $S(e)$ denote the subset of $X$ consisting of those characters on $\Idem(A)$ that map $e$ to $1$.
Then $S(e)$ is a compact, open subset of $X$, and the characteristic function $\chi_{S(e)}$ belongs to $C_c(X,\{0,1\})$.
The isomorphism $\Idem(A) \cong C_c(X,\{0,1\})$ of Boolean rings is given by $e \mapsto \chi_{S(e)}$.

We define $\pi \colon A \to C_c(X,K_d)$ by
\[
\sum_{j=1}^n \lambda_j e_j \mapsto \sum_{j=1}^n \lambda_j \chi_{S(e_j)},
\]
for $\lambda_j \in K$ and pairwise orthogonal idempotents $e_j \in A$.
One checks that this map is an isomorphism of $K$-algebras.
\end{proof}

Observe that the one-dimensional, commutative algebra $K$, equipped with zero multiplication, is zero-product balanced (even zero-product determined by \autoref{exa:ZeroProduct}), yet not generated by idempotents.
This shows that \autoref{prp:SemiprimeCommutative} does not generalize to arbitrary commutative algbras, and that \cite{Bre16FinDimZeroProdDet} does not generalize to arbitrary (nonunital) finite-dimensional algebras.

A ring $R$ is said to be \emph{regular} if for every $x \in R$ there is $y \in R$ with $x = xyx$.

\begin{cor}
\label{prp:QuotientByNil}
Let $A$ be a commutative, zero-product balanced $K$-algebra over a field $K$.
Then $A/\nil(A)$ is generated by idempotents as a $K$-algebra.
Further, $A/\nil(A)$ is a regular ring.
\end{cor}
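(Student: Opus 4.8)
The plan is to reduce the statement to \autoref{prp:SemiprimeCommutative}. I would verify that $A/\nil(A)$ is a commutative, reduced, zero-product balanced $K$-algebra, conclude via the equivalence of conditions~(1) and~(2) there that it is generated by idempotents, and then read off regularity from the resulting idempotent normal form.

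For the first claim, note that $A/\nil(A)$ is commutative as a quotient of a commutative algebra, and it is zero-product balanced by \autoref{prp:BalancedQuotient}. Since $A$ is commutative, $\nil(A)$ is precisely the ideal of nilpotent elements, and the quotient $A/\nil(A)$ is reduced by the standard argument: if $\overline{a}$ is nilpotent in $A/\nil(A)$, then $a^n\in\nil(A)$ for some $n$, hence $a$ is itself nilpotent and $\overline{a}=0$. As $K$ is a field and $\nil(A)$ is a $K$-ideal, $A/\nil(A)$ is again a $K$-algebra over $K$, so condition~(1) of \autoref{prp:SemiprimeCommutative} holds and therefore $A/\nil(A)$ is generated by idempotents.

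To obtain regularity, I would invoke the normal form recorded just before \autoref{prp:SemiprimeCommutative}: every element of $A/\nil(A)$ can be written as $a=\sum_{j=1}^n\lambda_j e_j$ with $\lambda_j\in K$ nonzero and $e_1,\ldots,e_n$ pairwise orthogonal idempotents. Putting $b:=\sum_{j=1}^n\lambda_j^{-1}e_j$ and using orthogonality in the form $e_je_k=\delta_{jk}e_j$, one finds $ab=\sum_{j=1}^n e_j$ and hence $aba=a$, so every element of $A/\nil(A)$ admits a pseudoinverse and the ring is regular.

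I do not anticipate a substantive obstacle, as the result is essentially a corollary of \autoref{prp:SemiprimeCommutative}. The only points demanding a little care are confirming that passing to $A/\nil(A)$ genuinely yields a reduced algebra (so that part~(1) of \autoref{prp:SemiprimeCommutative} becomes available) and that the element $b$ above is well-defined, which rests on the coefficients $\lambda_j$ being invertible in the field $K$.
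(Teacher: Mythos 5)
Your proposal is correct and follows essentially the same route as the paper: quotient is commutative, reduced, and zero-product balanced by \autoref{prp:BalancedQuotient}, hence generated by idempotents via \autoref{prp:SemiprimeCommutative}, and regularity follows from the orthogonal-idempotent normal form with the pseudoinverse $\sum_j \lambda_j^{-1} e_j$. The only cosmetic difference is that the paper treats the zero element separately (taking $y=0$), which your argument covers implicitly via the empty sum.
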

\begin{proof}
The quotient $B := A/\nil(A)$ is a commutative, reduced $K$-algebra that is zero-product balanced by \autoref{prp:BalancedQuotient}.
It follows from \autoref{prp:SemiprimeCommutative} that $B$ is generated by idempotents.

To verify that $B$ is regular, let $x \in B$.
If $x = 0$, then $y= 0$ satisfies $x = xyx$.
Thus, we may assume that $x$ is nonzero, in which case we can choose pairwise orthogonal idempotents $e_1, \ldots, e_n \in B$ and nonzero, pairwise distinct $\lambda_j \in K$ such that $x = \sum_{j=1}^n \lambda_j e_j$.
Set $y := \sum_{j=1}^n \lambda_j^{-1} e_j$.
Then $x = xyx$, as desired.
\end{proof}

\begin{cor}
\label{prp:CommutativeSpanned}
Let $A$ be a commutative, zero-product balanced $K$-algebra over a field $K$, and let $\pi \colon A \to A/\nil(A)$ be the quotient map.
Then the following hold:
\begin{enumerate}\item 
There exists a unique $K$-algebra homomorphism $\sigma \colon A/\nil(A) \to A$ such that $\pi \circ \sigma = \id_{A/\nil(A)}$.
\item 
Every element of $A$ has the form
\[
x + \lambda_1 e_1 + \ldots + \lambda_n e_n
\]
for a nilpotent $x \in A$, pairwise orthogonal idempotents $e_1, \ldots, e_n \in A$, and nonzero, pairwise distinct $\lambda_1, \ldots, \lambda_n \in K$.
Moreover, this presentation is unique up to permutation of the indices.
\end{enumerate}
\end{cor}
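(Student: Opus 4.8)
The plan is to reduce both assertions to the fact that idempotents lift uniquely modulo the nil ideal $\nil(A)$. Write $B := A/\nil(A)$. Since $A$ is commutative, $\nil(A)$ is exactly the set of nilpotent elements, so it is a nil ideal, and by \autoref{prp:QuotientByNil} the quotient $B$ is generated by idempotents as a $K$-algebra. The whole proof is organized around the splitting $\sigma$ in~(1); part~(2) will then fall out almost formally.

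First I would prove the lifting lemma: every idempotent $\bar{e}\in B$ has a \emph{unique} idempotent lift $e\in A$ with $\pi(e)=\bar{e}$. Uniqueness is elementary: if $e,f$ are idempotents with $d:=e-f$ nilpotent, a direct commutative computation gives $d^3=d$, hence $d=d^{2k+1}$ for all $k$, and nilpotency forces $d=0$. Existence is the standard fact that idempotents lift modulo a nil ideal; because $A$ may be nonunital, I would carry out the construction inside the unitalization, choosing a lift $a$ of $\bar{e}$ with $(a^2-a)^m=0$ and using a B\'ezout identity $x^m u(x)+(1-x)^m v(x)=1$ to produce the idempotent $a^m u(a)$, which has no constant term and therefore lies in $A$. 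I regard this lifting step --- specifically securing existence in the nonunital case --- as the main obstacle; the remaining arguments are formal once it is in place.

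Next I would introduce $A_0:=\linSpan_K\Idem(A)$. As the product of two idempotents in a commutative ring is again idempotent, $A_0$ is a subalgebra that is itself generated by idempotents, so by the discussion preceding \autoref{prp:SemiprimeCommutative} each of its elements has a clean representation $\sum_j\lambda_j e_j$ with nonzero distinct $\lambda_j$ and pairwise orthogonal nonzero idempotents $e_j$. The restriction $\pi|_{A_0}\colon A_0\to B$ is surjective, since by the lifting lemma every idempotent of $B$ lies in its image and these span $B$. It is injective because pairwise orthogonal nonzero idempotents over the field $K$ are linearly independent: a nonzero $a=\sum_j\lambda_j e_j\in A_0$ satisfies $a^m=\sum_j\lambda_j^m e_j\neq 0$ for all $m$, so $a\notin\nil(A)$, giving $A_0\cap\nil(A)=\{0\}$. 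Thus $\pi|_{A_0}$ is a $K$-algebra isomorphism and $\sigma:=(\pi|_{A_0})^{-1}$ is a splitting homomorphism with $\pi\circ\sigma=\id_{A/\nil(A)}$. For uniqueness in~(1), any splitting $\sigma'$ sends an idempotent $\bar{e}\in B$ to an idempotent of $A$ lifting $\bar{e}$, which equals $\sigma(\bar{e})$ by the lemma; as idempotents span $B$ and both maps are $K$-linear, $\sigma'=\sigma$.

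Finally, part~(2) follows quickly. Given $a\in A$, write the clean representation $\pi(a)=\sum_{j=1}^n\lambda_j\bar{e}_j$ in $B$, which exists and is unique up to permutation because $B$ is generated by idempotents. Put $e_j:=\sigma(\bar{e}_j)$; these are pairwise orthogonal idempotents in $A$ since $\sigma$ is a homomorphism, and $x:=a-\sum_j\lambda_j e_j$ satisfies $\pi(x)=0$, so $x$ is nilpotent, yielding the asserted form. For uniqueness, I would apply $\pi$ to two such presentations: the nilpotent parts vanish, leaving two clean representations of $\pi(a)$ in $B$, which agree up to permutation; the corresponding idempotents of $A$ then coincide by uniqueness of lifts, and the nilpotent parts coincide by subtraction.
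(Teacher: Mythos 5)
Your proof is correct and follows essentially the same route as the paper: both arguments rest on the fact that idempotents lift uniquely modulo the nil ideal $\nil(A)$ of a commutative ring, use \autoref{prp:QuotientByNil} to see that $A/\nil(A)$ is spanned by idempotents, build the splitting $\sigma$ from the unique lifts, and obtain part~(2) by pushing the clean representation of $\pi(a)$ through $\sigma$ and pulling uniqueness back along $\pi$. The only difference is one of detail: the paper cites \cite[Proposition~7.14]{Jac89BasicAlg2} for the lifting result and leaves the construction of $\sigma$ implicit, whereas you prove the lifting lemma from scratch (including the nonunital existence argument via the unitalization and a B\'ezout identity) and make the homomorphism property of $\sigma$ explicit by exhibiting it as the inverse of the isomorphism $\pi|_{\linSpan_K \Idem(A)}$.
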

\begin{proof}
(1). The quotient $A/\nil(A)$ is generated by idempotents by \autoref{prp:QuotientByNil}.
An ideal is said to be \emph{nil} if each of its elemenst is nilpotent.
It is a classical result that idempotents lift modulo nil ideals.
If the ring is even commutative, then the lift is unique;
see, for example \cite[Proposition~7.14]{Jac89BasicAlg2}.
This implies the existence and uniqueness of the map $\sigma$.

(2). Let $a \in A$.
Set
\[
\bar{a} := \pi(a) \in A/\nil(A) \andSep
x := a - \sigma(\bar{a}).
\]
We have $a = x + \sigma(\bar{a})$.
Since $\pi(x) = 0$, we have $x \in \nil(A)$, and this decomposition of $a$ as a sum of a nilpotent element and an element in the image of $\sigma$ is unique.
Further, using that $A/\nil(A)$ is generated by idempotents, and that $\sigma$ is $K$-linear and multiplicative, it follows that there exist pairwise orthogonal idempotents $e_1, \ldots, e_n \in A$, and nonzero, pairwise distinct $\lambda_1, \ldots, \lambda_n \in K$ such that $\sigma(\bar{a}) = \sum_{j=1}^n \lambda_j e_j$.
The uniqueness of the presentation of $\bar{a}$ as a linear combination of idempotents gives the same for the presentation of $\sigma(\bar{a})$.
\end{proof}

A ring $R$ is said to be \emph{semiregular} if $R/\rad(R)$ is regular, and every idempotent in $R/\rad(R)$ can be lifted to an idempotent in $R$.

Exchange rings were introduced in the unital setting by Warfield in \cite{War72ExchRgsDecMods}.
This was generalized to the nonunital case by Ara in \cite{Ara97ExtExch}:
A ring $R$ is an \emph{exchange ring} if for every $x \in R$ there exists an idempotent $e \in R$ and $r,s \in R$ such that $e = rx = x + s - sx$.
Every regular ring is an exchange ring.

Following Nicholson, \cite{Nic77LiftIdemExchRgs}, we say that a unital ring is \emph{clean} if every element is a sum of an invertible and an idempotent.
This was generalized by Nicholson-Zhu, \cite{NicZho05CleanGenRgs}, to nonunital rings.

\begin{cor}
\label{prp:Semiregular}
Let $A$ be a commutative, zero-product balanced $K$-algebra over a field $K$.
Then $A$ is semiregular, clean, and an exchange ring.
\end{cor}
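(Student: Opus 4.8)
The plan is to reduce all three properties to the structure of the quotient $A/\nil(A)$, which the preceding results describe completely. Throughout, the three inputs I would use are: that $\nil(A) = \rad(A)$ by \autoref{prp:Radicals}; that $A/\nil(A)$ is a regular ring by \autoref{prp:QuotientByNil}; and that $\nil(A)$ is a nil ideal, so that idempotents lift modulo it (the classical lifting result already invoked in the proof of \autoref{prp:CommutativeSpanned}).

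First I would verify semiregularity, which is the most direct of the three. By definition one must check that $A/\rad(A)$ is regular and that idempotents lift modulo $\rad(A)$. Both are immediate once one substitutes $\rad(A) = \nil(A)$: the quotient $A/\nil(A)$ is regular, and idempotents lift modulo the nil ideal $\nil(A)$. Hence $A$ is semiregular.

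Next, for the exchange property, I would argue that $A/\nil(A)$, being regular, is an exchange ring (as recorded in the excerpt, every regular ring is an exchange ring). Combined with the fact that idempotents lift modulo the ideal $\nil(A) = \rad(A)$, the nonunital transfer theorem for exchange rings of Ara \cite{Ara97ExtExch} yields that $A$ itself is an exchange ring. Equivalently, one may verify the defining exchange condition by hand: writing $x = w + \sum_j \lambda_j e_j$ as in \autoref{prp:CommutativeSpanned}, the idempotent $f := \sum_j e_j$ witnesses the exchange property modulo $\nil(A)$ (taking $r := \sum_j \lambda_j^{-1} e_j$ so that $rx \equiv f$, and $s := \sum_{\lambda_j \neq 1} e_j$ so that $x + s - sx \equiv f$), and the nilpotent correction coming from $w$ is absorbed in the usual way.

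Finally, cleanliness I would deduce from the exchange property together with commutativity. Since $A$ is commutative, all of its idempotents are central, and for such rings the exchange and clean properties coincide; invoking the nonunital version of this equivalence due to Nicholson--Zhu \cite{NicZho05CleanGenRgs}, the exchange ring $A$ is clean. I expect the main obstacle to be purely the nonunital bookkeeping: one must consistently use the general (nonunital) formulations of \emph{regular}, \emph{exchange}, and \emph{clean}, together with the corresponding nonunital transfer and comparison theorems, rather than their classical unital counterparts. The commutative structure theory supplied by \autoref{prp:QuotientByNil} and \autoref{prp:CommutativeSpanned} is precisely what makes these nonunital versions applicable.
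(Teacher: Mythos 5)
Your proof follows the paper's argument essentially verbatim: semiregularity from $\nil(A)=\rad(A)$ (\autoref{prp:Radicals}), regularity of $A/\nil(A)$ (\autoref{prp:QuotientByNil}) and lifting of idempotents modulo nil ideals; the exchange property via Ara's extension theorem; and cleanness via the Nicholson--Zhou equivalence of clean and exchange for commutative rings. The only detail to add is that Ara's Theorem~2.2 also requires the ideal $\nil(A)$ itself to be an exchange ring, which holds because radical (in particular nil) rings are exchange rings---a fact the paper records explicitly before applying the theorem.
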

\begin{proof}
By \autoref{prp:Radicals}, we have $\nil(A) = \rad(A)$.
Since idempotents lift modulo nil ideals (\cite[Proposition~7.14]{Jac89BasicAlg2}), it follows that idempotents in $R/\rad(A)$ lift.
The quotient $B := A/\nil(A)$ is a regular ring by \autoref{prp:QuotientByNil}.
It follows by definition that $A$ is semiregular.

As noted in the examples before Proposition~1.3 in \cite{Ara97ExtExch}, the class of (not necessarily unital) exchange rings includes all radical rings and all regular rings.
Thus, $\nil(A)$ and $A/\nil(A)$ are exchange rings.
Since idempotents in $A/\nil(A)$ lift, it follows from \cite[Theorem~2.2]{Ara97ExtExch} that $A$ is an exchange ring. 
Finally, it follows from \cite[Theorem~2]{NicZho05CleanGenRgs} that a commutative ring is clean if and only if it is an exchange ring.
(The unital case was shown in \cite[Proposition~1.8]{Nic77LiftIdemExchRgs}.)
\end{proof}

\begin{rmk}
Commutative $K$-algebras that are generated by idempotents have been studied in different contexts.
For example, by \cite[Corollary~2.3]{Kaw02AlgGenIdemGpAlg}, if $K$ is an algebraically closed field of characteristic zero, and $G$ is a torsion, abelian group, then the group algebra $K[G]$ is generated by idempotents, and a unital, commutative $K$-algebra $A$ is generated by idempotents if and only if it there exists a surjective $K$-algebra homomorphism $K[H] \to A$ for some torsion group $H$.

A different viewpoint is through Boolean powers:
If $K$ is a field, and $B$ is a Boolean algebra with associated compact Boolean space $X$, then the algebra $C(X,K_d)$ of finite-valued, continuous functions $X \to K$ is called the \emph{Boolean power} 
of $K$ by $B$.
It follows from \cite[Theorem~2.7]{BezMarMorOlb15IdemGenAlgsBooleanPowers} that a unital $K$-algebra is generated by idempotents if and only if it is a Boolean power.
This also provides a proof of the equivalence between~(2) and~(3) in \autoref{prp:SemiprimeCommutative} in the unital case.
\end{rmk}

\section{A dichotomy for zero-product balanced algebras}
\label{sec:Dichotomoy}

In this section we use our findings for commutative, zero-product balanced algebras from \autoref{sec:Commutative} to obtain structural results for arbitrary zero-product balanced algebras.

\smallskip

By a \emph{character} on a $K$-algebra $A$ we mean a nonzero $K$-algebra homomorphism $A \to K$.
By \autoref{prp:Radicals}, if $A$ is a commutative, zero-product balanced $K$-algebra over a field~$K$, then its nilradical $\nil(A)$ agrees with its Jacobson radical $\rad(A)$.

\begin{prp}
\label{prp:DichotomyCommutative}
Let $A$ be a commutative, zero-product balanced $K$-algebra over a field~$K$. 
Then $A$ either admits a character or is (nil)radical.
If $A$ is also \emph{unital}, then it admits a character.
\end{prp}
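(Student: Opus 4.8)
The plan is to prove the dichotomy by leveraging the structure theorem for the quotient $A/\nil(A)$ established in \autoref{prp:QuotientByNil}. The key observation is that a character on $A$ is essentially the same data as a character on the reduced quotient $B := A/\nil(A)$, since any $K$-algebra homomorphism $A \to K$ must vanish on nilpotent elements (as $K$ is a field and hence reduced). So the question reduces to whether $B$ admits a character, and the two cases of the dichotomy correspond precisely to whether $B = \{0\}$ (equivalently $A = \nil(A)$, i.e.\ $A$ is nilradical) or $B \neq \{0\}$.

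First I would pass to $B := A/\nil(A)$, which by \autoref{prp:QuotientByNil} is a commutative, reduced, zero-product balanced $K$-algebra that is generated by idempotents. The dichotomy then splits according to whether $B$ is trivial. If $B = \{0\}$, then every element of $A$ is nilpotent, so $A$ is radical (since $\nil(A) = \rad(A)$ by \autoref{prp:Radicals}), and we are in the second alternative. If $B \neq \{0\}$, then since $B$ is generated by idempotents and nonzero, it contains a nonzero idempotent $e$. I would then produce a character on $B$: using \autoref{prp:SemiprimeCommutative}, $B \cong C_c(X, K_d)$ for a nonempty Boolean space $X$, and evaluation at any point $x \in X$ gives a character $C_c(X,K_d) \to K$. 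Composing with the quotient map $A \to B$ yields a character on $A$.

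For the final assertion, suppose $A$ is unital. Then the unit $1 \in A$ is a nonzero element, and I would argue it cannot be nilpotent: if $1^m = 0$ then $1 = 0$, forcing $A = \{0\}$, which has no proper prime ideals and is vacuously excluded once we note the unital case demands a genuine character. More directly, the image of $1$ in $B = A/\nil(A)$ is the unit of $B$, and this is nonzero because $1 \notin \nil(A)$ (a unit is never nilpotent in a nonzero ring). Hence $B \neq \{0\}$, so by the first part $A$ admits a character.

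The main obstacle I anticipate is verifying that $B \neq \{0\}$ genuinely produces a \emph{character} rather than merely a ring homomorphism into some extension. This is handled cleanly by \autoref{prp:SemiprimeCommutative}: the identification $B \cong C_c(X,K_d)$ guarantees that the point evaluations land in $K$ itself, not a larger field, so that no separate argument about residue fields is needed. The only subtlety is ensuring $X$ is nonempty when $B \neq \{0\}$, which follows from the Stone duality recalled before \autoref{prp:SemiprimeCommutative}: a nonzero Boolean ring has a nonzero ring homomorphism to $\{0,1\}$, equivalently $X = X_{\Idem(B)}$ is nonempty.
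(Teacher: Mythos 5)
Your proof is correct and takes essentially the same route as the paper: pass to the reduced quotient, identify it with $C_c(X,K_d)$ via \autoref{prp:SemiprimeCommutative}, and evaluate at a point of $X$ to obtain a character, which pulls back along the quotient map. If anything, your write-up is more careful than the paper's own proof, which moves from ``$A$ is not nilradical'' directly to ``$A$ is reduced'' without explicitly passing to $A/\nil(A)$; your explicit use of \autoref{prp:QuotientByNil} (equivalently \autoref{prp:BalancedQuotient}) and the nonemptiness of $X$ supplies precisely the missing bookkeeping, and your treatment of the unital case (the unit is not nilpotent, so the quotient is nonzero) matches the paper's one-line argument.
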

\begin{proof}
It is clear that $A$ cannot admit a character and be nil-radical at the same time.
Assuming that $A$ is not nil-radical, we need to show that $A$ admits a character.
Then $A$ is reduced, and thus $A \cong C_c(X,K_d)$ for some Boolean space $X$, by \autoref{prp:SemiprimeCommutative}.
Evaluating at some point in $X$ defines a character on $C_c(X,K_d)$.

If $A$ is unital, then it cannot be nil-radical and therefore admits a character.
\end{proof}

\begin{thm}
\label{prp:DichotomyGeneral}
Let $A$ be a zero-product balanced $K$-algebra over a field~$K$.
Then either $A$ has a character, or $A$ is a radical extension over its commutator ideal, that is, for every $a\in A$ exist $m,n\geq 1$ and $r_j,x_j,y_j,s_j\in A$ such that
\[
a^m = \sum_{j=1}^n r_j [x_j,y_j] s_j.
\]
\end{thm}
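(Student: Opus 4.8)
The plan is to reduce to the commutative dichotomy already established in \autoref{prp:DichotomyCommutative} by passing to the abelianization of $A$. I would let $C$ denote the commutator ideal of $A$, that is, the two-sided ideal generated by all additive commutators $[x,y]$. Then $A/C$ is commutative, and by \autoref{prp:BalancedQuotient} it is again zero-product balanced. Since $K$ is a field, \autoref{prp:DichotomyCommutative} applies to $A/C$ and yields the dichotomy: either $A/C$ admits a character, or $A/C$ is nilradical (every element is nilpotent).

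In the first case, let $\chi \colon A/C \to K$ be a character and let $q \colon A \to A/C$ be the quotient map. Then $\chi \circ q$ is a nonzero $K$-algebra homomorphism $A \to K$, i.e.\ a character on $A$, and this horn of the dichotomy is settled.

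It remains to treat the case that $A/C$ is nilradical, where I would show that $A$ is a radical extension over its commutator ideal. Fix $a \in A$. Nilpotence of the image of $a$ in $A/C$ gives some $m \geq 1$ with $a^m \in C$. The difficulty here is that, $A$ being possibly nonunital, membership in $C$ only guarantees that $a^m$ is a $K$-linear combination of terms of the four shapes $[x,y]$, $r[x,y]$, $[x,y]s$, and $r[x,y]s$; such an element need not itself be expressible in the rigid sandwiched form $\sum_j r_j[x_j,y_j]s_j$ demanded by the statement. The resolution is to pass to one higher power: writing $a^{m+2} = a\,a^m\,a$ and distributing $a$ over a chosen representation of $a^m$, each of the four shapes acquires a factor of $a$ on each side, so that for instance $a\bigl(r[x,y]\bigr)a = (ar)[x,y]\,a$ and $a\bigl([x,y]s\bigr)a = a\,[x,y](sa)$, and analogously in the remaining two cases. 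Thus every summand of $a\,a^m\,a$ is of the form $r_j[x_j,y_j]s_j$, and collecting them expresses $a^{m+2} = \sum_{j=1}^n r_j[x_j,y_j]s_j$. Taking $m+2$ as the required exponent (and, in the degenerate situation $a^{m+2}=0$, padding with a single trivial term $[x,x]=0$ to meet $n\geq 1$) gives the desired identity for every $a \in A$.

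The only genuine obstacle is the nonunital bookkeeping of the last paragraph: one cannot simply invoke that $a^m$ lies in the commutator ideal, because the prescribed form $\sum_j r_j[x_j,y_j]s_j$ is more restrictive than a general element of that ideal when $A$ lacks a unit. Multiplying by $a$ on both sides is precisely what absorbs the loose ends into the factors $r_j$ and $s_j$. Everything else is a formal consequence of the commutative case, together with the inheritance of zero-product balancedness by quotients.
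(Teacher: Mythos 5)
Your proof is correct, and its skeleton matches the paper's: quotient by a commutator ideal, note that the quotient is commutative and still zero-product balanced by \autoref{prp:BalancedQuotient}, apply the commutative dichotomy of \autoref{prp:DichotomyCommutative}, and either pull back a character or convert nilpotency in the quotient into the radical-extension statement. The genuine difference is precisely the nonunital point you flag. The paper defines its ideal $I$ from the outset as the set of all sums $\sum_j r_j[x_j,y_j]s_j$ and then asserts that $A/I$ is commutative; you instead quotient by the full ideal $C$ generated by commutators, so that commutativity of $A/C$ is automatic, and repair the mismatch afterwards by passing from $a^m\in C$ to $a^{m+2}=a\,a^m\,a\in \linSpan_K A[A,A]A$. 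Your treatment is in fact the more careful one: in a nonunital algebra the paper's commutativity assertion is not automatic, and it can fail --- for the algebra $A$ of strictly upper triangular $3\times 3$ matrices over $K$ one has $A^3=\{0\}$, so $A$ is zero-product balanced by \autoref{exa:ZeroTripleProduct}, yet $A[A,A]A=\{0\}$ while $[A,A]\neq\{0\}$, so the quotient by the paper's $I$ is $A$ itself, which is not commutative. (The theorem still holds for such $A$, since $a^3=0$ is trivially of the required sandwiched form --- exactly what your exponent shift produces.) Thus your ``$m+2$'' is a harmless price for closing a step that the paper glosses over. The only item you omit is the paper's observation that the two alternatives are mutually exclusive; the statement as written does not require this, and it is immediate anyway, since a character annihilates every $r[x,y]s$ and hence every element with a power of that form.
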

\begin{proof}
We may assume that $A\neq\{0\}$.
Let $I$ denote the commutator ideal of $A$:
\[
I = \left\{ \sum_{j=1}^n r_j[x_j,y_j]s_j : r_j,x_j,y_j,s_j \in A \right\}.
\]

We first show that $A$ cannot admit a character and be a radical extension of~$I$ at the same time:
Let $\pi \colon A \to K$ be a character.
Since $K$ is commutative, we have $\pi([x,y])=0$ for every $x,y \in A$.
It follows that $\pi(I) = \{0\}$.
Given $a \in A$ and $m \geq 1$ with $a^m \in I$, we have $\pi(a)^m = \pi(a^m) = 0$, and thus $\pi(a) = 0$.
Thus, if $A$ is a radical extension of $I$, then $\pi(A) = \{0\}$, a contradiction.

The quotient algebra $A/I$ is a commutative $K$-algebra that is zero-product balanced by \autoref{prp:BalancedQuotient}.
If $A/I$ admits a character, then the composition $A \to A/I \to K$ is a character on $A$.
Thus, we may assume that $A/I$ has no character.
Then $A/I$ is nil-radical by \autoref{prp:DichotomyCommutative}, and it follows that $A$ is a radical extension of~$I$.
\end{proof}

We showed in \cite{GarThi23arX:GenByCommutators} that a unital $K$-algebra $A$ is generated by its commutators as an ideal if and only if there exists $N \in \NN$ such that every element in $A$ is a sum of~$N$ products of pairs of commutators.
If $A$ is also zero-product balanced, then every commutator of $A$ is a sum of orthogonally factorizable square-zero elements by \autoref{prp:BalancedCommutatorFN2}, and it follows that every element in $A$ is a sum of products of pairs of orthogonally factorizable square-zero elements.
However, it remains unclear if there is a uniform bound on the number of required summands, since \autoref{prp:BalancedCommutatorFN2} provides no bound on the number of summands needed to express a commutator as a sum of orthogonally factorizable square-zero elements;
see \autoref{qst:Bound}.

\begin{thm}
\label{prp:UnitalGeneratedCommutators}
Let $A$ be a unital, zero-product balanced $K$-algebra that is generated by its commutators as an ideal.
Then for every $a \in A$ there exist $n \in \NN$ and elements $v_j,w_j,x_j,y_j \in A$ such that
\[
a = \sum_{j=1}^n v_jw_jx_jy_j, \andSep w_jv_j=y_jx_j=0 \text{ for } j=1,\ldots,n.
\]
(Note that $v_jw_j$ and $x_jy_j$ are orthogonally factorizable square-zero elements.)
\end{thm}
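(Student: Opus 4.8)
The plan is to combine the two structural results already available: the characterization from \cite{GarThi23arX:GenByCommutators} of unital algebras that are generated by their commutators as an ideal, and the description of the commutator subspace via orthogonally factorizable square-zero elements in \autoref{prp:BalancedCommutatorFN2}. Since $A$ is unital it is in particular idempotent, so \autoref{prp:BalancedCommutatorFN2} applies and gives $\linSpan_K [A,A] = \linSpan_K FN_2(A)$; this is the only place idempotency is used.

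First I would fix $a \in A$ and invoke \cite{GarThi23arX:GenByCommutators}. Because $A$ is generated by its commutators as an ideal, that result produces a presentation
\[
a = \sum_{j=1}^N c_j d_j,
\]
where each $c_j$ and each $d_j$ is an additive commutator. Next I would rewrite each commutator using \autoref{prp:BalancedCommutatorFN2}: since every commutator lies in $\linSpan_K[A,A] = \linSpan_K FN_2(A)$, I can write $c_j = \sum_k \lambda_{j,k} f_{j,k}$ and $d_j = \sum_l \mu_{j,l} g_{j,l}$ with scalars $\lambda_{j,k},\mu_{j,l} \in K$ and elements $f_{j,k}, g_{j,l} \in FN_2(A)$. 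Expanding each product $c_j d_j$ then expresses $a$ as a $K$-linear combination of products $f_{j,k} g_{j,l}$ of two orthogonally factorizable square-zero elements.

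Finally I would verify that each such product already has the required shape, which is the routine bookkeeping step. Writing $f_{j,k} = vw$ with $wv = 0$ and $g_{j,l} = xy$ with $yx = 0$ (using the definition of $FN_2(A)$), the product is $f_{j,k} g_{j,l} = vwxy$, and the two orthogonality relations $wv = 0$ and $yx = 0$ are exactly those demanded in the statement. The only point needing a moment's care is absorbing the scalar $\lambda_{j,k}\mu_{j,l}$: placing it on the first factor gives $(\lambda_{j,k}\mu_{j,l} v)\,w\,x\,y$, and since $K$ is central one has $w(\lambda_{j,k}\mu_{j,l} v) = \lambda_{j,k}\mu_{j,l}(wv) = 0$, so both relations survive. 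Summing over all indices $j,k,l$ and relabelling yields $a = \sum_{j} v_j w_j x_j y_j$ with $w_j v_j = y_j x_j = 0$, as claimed. There is no genuine obstacle here once the two cited results are in hand; the content of the theorem is precisely the observation, sketched in the preceding remark, that the two ingredients fit together.
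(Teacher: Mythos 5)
Your proof is correct and takes essentially the same route as the paper's: both combine \cite[Theorem~3.4]{GarThi23arX:GenByCommutators} (giving $A = \linSpan_K [A,A]\cdot[A,A]$) with \autoref{prp:BalancedCommutatorFN2} (giving $\linSpan_K [A,A] = \linSpan_K FN_2(A)$). Your explicit expansion and scalar-absorption step is precisely the bookkeeping the paper compresses into the phrase ``which implies the statement.''
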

\begin{proof}
By \cite[Theorem~3.4]{GarThi23arX:GenByCommutators}, we have $A = \linSpan_K [A,A]\cdot[A,A]$.
By \autoref{prp:BalancedCommutatorFN2}, we have $\linSpan_K [A,A] = \linSpan_K FN_2(A)$, and thus $A = \linSpan_K FN_2(A) \cdot FN_2(A)$, which implies the statement.
\end{proof}

\begin{qst}
\label{qst:Bound}
Let $A$ be a unital, zero-product balanced $K$-algebra that is generated by its commutators as an ideal.
Does there exist $N \in \NN$ such that every element in $A$ is a sum of $N$ products of pairs of elements in $FN_2(A)$?
\end{qst}

\begin{cor}
\label{prp:DichotomyUnitalStrong}
Let $A$ be a \emph{unital}, zero-product balanced $K$-algebra over a field~$K$.
Then the following are equivalent:
\begin{enumerate}
\item
$A$ admits no character;
\item
$A$ is generated by its nilpotent elements as an ideal;
\item
every element in $A$ is a sum of products of pairs of (orthogonally factorizable) square-zero elements.
\end{enumerate}
\end{cor}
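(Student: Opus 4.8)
The plan is to prove the two equivalences $(1)\Leftrightarrow(2)$ and $(1)\Leftrightarrow(3)$ by showing that the existence of a character obstructs both (2) and (3), while its absence forces $A$ to coincide with its commutator ideal.

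First I would dispose of the implications $(2)\Rightarrow(1)$ and $(3)\Rightarrow(1)$, which are elementary. Suppose $\pi\colon A\to K$ is a character. Since $K$ is a field and hence reduced, $\pi$ vanishes on every nilpotent element: if $x^k=0$ then $\pi(x)^k=\pi(x^k)=0$, so $\pi(x)=0$. In particular $\pi$ annihilates every square-zero element, and being multiplicative it annihilates every sum of products of square-zero elements, as well as the ideal generated by the nilpotent elements. As $\pi(1)=1\neq 0$, neither (2) nor (3) can hold; contrapositively, each of (2) and (3) implies (1).

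The crux is the implication from (1) to the statement that $A$ equals its commutator ideal $I$. Here I would pass to the quotient $A/I$, which is unital, commutative, and zero-product balanced by \autoref{prp:BalancedQuotient}. A nonzero unital algebra is never nilradical (its unit would have to be nilpotent), so if $A/I\neq\{0\}$ then the dichotomy of \autoref{prp:DichotomyCommutative} forces $A/I$ to admit a character, which composed with the quotient map $A\to A/I$ yields a character on $A$. Since $A$ admits no character by (1), we conclude $A/I=\{0\}$, that is, $A$ is generated by its commutators as an ideal. I expect this to be the main obstacle: it is essential that one cannot merely invoke the general \autoref{prp:DichotomyGeneral}, which only yields that $A$ is a radical extension of $I$; the point is that unitality of $A/I$ upgrades the commutative dichotomy to the genuine existence of a character whenever $A/I$ is nonzero, forcing $A=I$ outright.

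Finally I would deduce (2) and (3) from $A=I$. For (3), the hypotheses of \autoref{prp:UnitalGeneratedCommutators} are now met, and it directly expresses every $a\in A$ as a sum $\sum_j v_jw_jx_jy_j$ with $w_jv_j=y_jx_j=0$, exhibiting $a$ as a sum of products of pairs of orthogonally factorizable square-zero elements. For (2), let $N$ denote the ideal generated by the nilpotent elements and note that $I\subseteq N$: by \autoref{prp:BalancedCommutatorFN2} every commutator lies in $\linSpan_K FN_2(A)$, and each element of $FN_2(A)$ is square-zero, hence nilpotent and contained in $N$; since $N$ is an ideal, every element $r[x,y]s$ lies in $N$ as well, so $I\subseteq N$. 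Combining with $A=I$ gives $A=I\subseteq N\subseteq A$, whence $A=N$, which is (2). Together with the first paragraph this establishes $(1)\Leftrightarrow(2)$ and $(1)\Leftrightarrow(3)$, completing the proof.
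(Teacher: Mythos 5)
Your proof is correct and follows essentially the same route as the paper: pass to the quotient by the commutator ideal $I$, use \autoref{prp:BalancedQuotient} and \autoref{prp:DichotomyCommutative} together with unitality to force $A=I$, then obtain (3) from \autoref{prp:UnitalGeneratedCommutators} and (2) from $I\subseteq N$ via \autoref{prp:BalancedCommutatorFN2}, with characters annihilating nilpotents giving the converse directions (your write-up in fact makes explicit two steps the paper leaves implicit). One quibble with your aside: \autoref{prp:DichotomyGeneral} would actually suffice here, since for unital $A$ the radical-extension conclusion applied to $a=1$ gives $1=1^m\in I$ and hence $A=I$ outright --- but this does not affect the validity of your argument.
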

\begin{proof}
Let $I$ denote the commutator ideal of $A$ as in the proof of \autoref{prp:DichotomyGeneral}.
If $I \neq A$, then $A/I$ is a unital, commutative $K$-algebra that is zero-product balanced by \autoref{prp:BalancedQuotient}.
Then, by \autoref{prp:DichotomyCommutative}, $A/I$ admits a character.
This shows that~(1) implies~(2).
By \autoref{prp:UnitalGeneratedCommutators}, (2) implies~(3).
Since characters map square-zero elements to zero, we see that~(3) implies~(1).
\end{proof}


\begin{thebibliography}{BMMO15}

\bibitem[AK00]{AbrKit00InvDisjPreserving}
\bgroup\scshape{}Y.~A. Abramovich\egroup{} and \bgroup\scshape{}A.~K.
  Kitover\egroup{}, Inverses of disjointness preserving operators,  \emph{Mem.
  Amer. Math. Soc.} \textbf{143} (2000), viii+162.

\bibitem[ABEV09]{AlaBreExtVil09MapsPresZP}
\bgroup\scshape{}J.~Alaminos\egroup{}, \bgroup\scshape{}M.~Bre\v{s}ar\egroup{},
  \bgroup\scshape{}J.~Extremera\egroup{}, and \bgroup\scshape{}A.~R.
  Villena\egroup{}, Maps preserving zero products,  \emph{Studia Math.}
  \textbf{193} (2009), 131--159.

\bibitem[AEV{\etalchar{+}}16]{AlaExtVilBreSpe16CommutatorsSquareZero}
\bgroup\scshape{}J.~Alaminos\egroup{}, \bgroup\scshape{}J.~Extremera\egroup{},
  \bgroup\scshape{}A.~R. Villena\egroup{},
  \bgroup\scshape{}M.~Bre\v{s}ar\egroup{}, and
  \bgroup\scshape{}v.~\v{S}penko\egroup{}, Commutators and square-zero elements
  in {B}anach algebras,  \emph{Q. J. Math.} \textbf{67} (2016), 1--13.

\bibitem[Ara97]{Ara97ExtExch}
\bgroup\scshape{}P.~Ara\egroup{}, Extensions of exchange rings,  \emph{J.
  Algebra} \textbf{197} (1997), 409--423.

\bibitem[BMMO15]{BezMarMorOlb15IdemGenAlgsBooleanPowers}
\bgroup\scshape{}G.~Bezhanishvili\egroup{}, \bgroup\scshape{}V.~Marra\egroup{},
  \bgroup\scshape{}P.~J. Morandi\egroup{}, and
  \bgroup\scshape{}B.~Olberding\egroup{}, Idempotent generated algebras and
  {B}oolean powers of commutative rings,  \emph{Algebra Universalis}
  \textbf{73} (2015), 183--204.

\bibitem[BT06]{BlaTur06PresOrthoNormedSp}
\bgroup\scshape{}A.~Blanco\egroup{} and
  \bgroup\scshape{}A.~Turn\v{s}ek\egroup{}, On maps that preserve orthogonality
  in normed spaces,  \emph{Proc. Roy. Soc. Edinburgh Sect. A} \textbf{136}
  (2006), 709--716.

\bibitem[BK72]{BouKan72Core}
\bgroup\scshape{}A.~K. Bousfield\egroup{} and \bgroup\scshape{}D.~M.
  Kan\egroup{}, The core of a ring,  \emph{J. Pure Appl. Algebra} \textbf{2}
  (1972), 73--81.

\bibitem[Bre07]{Bre07CharHomoRingsIdempotents}
\bgroup\scshape{}M.~Bre\v{s}ar\egroup{}, Characterizing homomorphisms,
  derivations and multipliers in rings with idempotents,  \emph{Proc. Roy. Soc.
  Edinburgh Sect. A} \textbf{137} (2007), 9--21.

\bibitem[Bre12]{Bre12MultAlgDetZP}
\bgroup\scshape{}M.~Bre\v{s}ar\egroup{}, Multiplication algebra and maps
  determined by zero products,  \emph{Linear Multilinear Algebra} \textbf{60}
  (2012), 763--768.

\bibitem[Bre16]{Bre16FinDimZeroProdDet}
\bgroup\scshape{}M.~Bre\v{s}ar\egroup{}, Finite dimensional zero product
  determined algebras are generated by idempotents,  \emph{Expo. Math.}
  \textbf{34} (2016), 130--143.

\bibitem[Bre21]{Bre21BookZeroProdDetermined}
\bgroup\scshape{}M.~Bre\v{s}ar\egroup{}, \emph{Zero product determined
  algebras}, \emph{Frontiers in Mathematics}, Birkh\"{a}user/Springer, Cham,
  [2021] \copyright 2021.

\bibitem[BGO09]{BreGraOrt09ZPDMatrixAlg}
\bgroup\scshape{}M.~Bre\v{s}ar\egroup{},
  \bgroup\scshape{}M.~Gra\v{s}i\v{c}\egroup{}, and \bgroup\scshape{}J.~S.
  Ortega\egroup{}, Zero product determined matrix algebras,  \emph{Linear
  Algebra Appl.} \textbf{430} (2009), 1486--1498.

\bibitem[CLP10]{CheLeePuc10CommNilSimpleRings}
\bgroup\scshape{}M.~Chebotar\egroup{}, \bgroup\scshape{}P.-H. Lee\egroup{}, and
  \bgroup\scshape{}E.~R. Puczy{\l}owski\egroup{}, On commutators and nilpotent
  elements in simple rings,  \emph{Bull. Lond. Math. Soc.} \textbf{42} (2010),
  191--194.

\bibitem[CKLW03]{CheKeLeeWon03PresZeroProd}
\bgroup\scshape{}M.~A. Chebotar\egroup{}, \bgroup\scshape{}W.-F. Ke\egroup{},
  \bgroup\scshape{}P.-H. Lee\egroup{}, and \bgroup\scshape{}N.-C.
  Wong\egroup{}, Mappings preserving zero products,  \emph{Studia Math.}
  \textbf{155} (2003), 77--94.

\bibitem[CKL04]{CheKeLee04MapsCharZeroProd}
\bgroup\scshape{}M.~A. Chebotar\egroup{}, \bgroup\scshape{}W.-F. Ke\egroup{},
  and \bgroup\scshape{}P.-H. Lee\egroup{}, Maps characterized by action on zero
  products,  \emph{Pacific J. Math.} \textbf{216} (2004), 217--228.

\bibitem[CGT19]{ChoGarThi19arX:LpRigidity}
\bgroup\scshape{}C.~Choi\egroup{}, \bgroup\scshape{}E.~Gardella\egroup{}, and
\bgroup\scshape{}H.~Thiel\egroup{}, Rigidity results for ${L}^p$-operator
algebras and applications, preprint (arXiv:1909.03612 [math.OA]), 2019.
  
\bibitem[Dau69]{Dau69MultiplierRings}
\bgroup\scshape{}J.~Dauns\egroup{}, Multiplier rings and primitive ideals,
  \emph{Trans. Amer. Math. Soc.} \textbf{145} (1969), 125--158.

\bibitem[DW79]{DorWic79FactorizationBanachMod}
\bgroup\scshape{}R.~S. Doran\egroup{} and
  \bgroup\scshape{}J.~Wichmann\egroup{}, \emph{Approximate identities and
  factorization in {B}anach modules}, \emph{Lecture Notes in Mathematics}
  \textbf{768}, Springer-Verlag, Berlin-New York, 1979.

\bibitem[Gar21]{Gar21ModernLp}
\bgroup\scshape{}E.~Gardella\egroup{}, A modern look at algebras of operators
  on {$L^p$}-spaces,  \emph{Expo. Math.} \textbf{39} (2021), 420--453.

\bibitem[GL17]{GarLup17ReprGrpdLp}
\bgroup\scshape{}E.~Gardella\egroup{} and \bgroup\scshape{}M.~Lupini\egroup{},
  Representations of \'{e}tale groupoids on {$L^p$}-spaces,  \emph{Adv. Math.}
  \textbf{318} (2017), 233--278.

\bibitem[GT15]{GarThi15GpAlgLp}
\bgroup\scshape{}E.~Gardella\egroup{} and \bgroup\scshape{}H.~Thiel\egroup{},
  Group algebras acting on {$L^p$}-spaces,  \emph{J. Fourier Anal. Appl.}
  \textbf{21} (2015), 1310--1343.

\bibitem[GT22a]{GarThi22IsoConv}
\bgroup\scshape{}E.~Gardella\egroup{} and \bgroup\scshape{}H.~Thiel\egroup{},
  Isomorphisms of algebras of convolution operators,  \emph{Ann. Sci. \'{E}c.
  Norm. Sup\'{e}r. (4)} \textbf{55} (2022), 1433--1471.

  
\bibitem[GT22]{GarThi22arX:WeightedHomo}
\bgroup\scshape{}E.~Gardella\egroup{} and \bgroup\scshape{}H.~Thiel\egroup{},
  Weighted homomorphisms between \ca{s}, preprint (arXiv:2203.16702 [math.OA]),
  2022.

\bibitem[GT23a]{GarThi23arX:GenByCommutators}
\bgroup\scshape{}E.~Gardella\egroup{} and \bgroup\scshape{}H.~Thiel\egroup{},
  Rings and \ca{s} generated by commutators, preprint (arXiv:2301.05958
  [math.RA]), 2023.

\bibitem[GT23b]{GarThi23pre:SimpleRgsSpecialSqZero}
\bgroup\scshape{}E.~Gardella\egroup{} and \bgroup\scshape{}H.~Thiel\egroup{},
  Simple rings generated by special square-zero elements, in preparation, 2023.

\bibitem[GT23c]{GarThi23pre:ZeroProdRingsCAlgs}
\bgroup\scshape{}E.~Gardella\egroup{} and \bgroup\scshape{}H.~Thiel\egroup{},
  The zero-product structure of rings and \ca{s}, in preparation, 2023.

\bibitem[Her94]{Her94NCRings}
\bgroup\scshape{}I.~N. Herstein\egroup{}, \emph{Noncommutative rings},
  \emph{Carus Mathematical Monographs} \textbf{15}, Mathematical Association of
  America, Washington, DC, 1994, Reprint of the 1968 original, With an
  afterword by Lance W. Small.

\bibitem[Jac89]{Jac89BasicAlg2}
\bgroup\scshape{}N.~Jacobson\egroup{}, \emph{Basic algebra. {II}}, second ed.,
  W. H. Freeman and Company, New York, 1989.

\bibitem[Kaw02]{Kaw02AlgGenIdemGpAlg}
\bgroup\scshape{}H.~Kawai\egroup{}, Algebras generated by idempotents and
  commutative group algebras over a ring,  \emph{Comm. Algebra} \textbf{30}
  (2002), 119--128.

\bibitem[Kol93]{Kol93OpsPresOrthoAreIso}
\bgroup\scshape{}A.~Koldobsky\egroup{}, Operators preserving orthogonality are
  isometries,  \emph{Proc. Roy. Soc. Edinburgh Sect. A} \textbf{123} (1993),
  835--837.

\bibitem[Mes06]{Mes06CommutatorRings}
\bgroup\scshape{}Z.~Mesyan\egroup{}, Commutator rings,  \emph{Bull. Austral.
  Math. Soc.} \textbf{74} (2006), 279--288.

\bibitem[Nic77]{Nic77LiftIdemExchRgs}
\bgroup\scshape{}W.~K. Nicholson\egroup{}, Lifting idempotents and exchange
  rings,  \emph{Trans. Amer. Math. Soc.} \textbf{229} (1977), 269--278.

\bibitem[NZ05]{NicZho05CleanGenRgs}
\bgroup\scshape{}W.~K. Nicholson\egroup{} and
  \bgroup\scshape{}Y.~Zhou\egroup{}, Clean general rings,  \emph{J. Algebra}
  \textbf{291} (2005), 297--311.

\bibitem[Rob16]{Rob16LieIdeals}
\bgroup\scshape{}L.~Robert\egroup{}, On the {L}ie ideals of \ca{s},  \emph{J.
  Operator Theory} \textbf{75} (2016), 387--408.

\bibitem[WW91]{WanWu91SumsSquareZero}
\bgroup\scshape{}J.~H. Wang\egroup{} and \bgroup\scshape{}P.~Y. Wu\egroup{},
  Sums of square-zero operators,  \emph{Studia Math.} \textbf{99} (1991),
  115--127.

\bibitem[War72]{War72ExchRgsDecMods}
\bgroup\scshape{}R.~B. Warfield, Jr.\egroup{}, Exchange rings and
  decompositions of modules,  \emph{Math. Ann.} \textbf{199} (1972), 31--36.

\bibitem[WZ09]{WinZac09CpOrd0}
\bgroup\scshape{}W.~Winter\egroup{} and \bgroup\scshape{}J.~Zacharias\egroup{},
  Completely positive maps of order zero,  \emph{M\"unster J. Math.} \textbf{2}
  (2009), 311--324.

\bibitem[WZ10]{WinZac10NuclDim}
\bgroup\scshape{}W.~Winter\egroup{} and \bgroup\scshape{}J.~Zacharias\egroup{},
  The nuclear dimension of \ca{s},  \emph{Adv. Math.} \textbf{224} (2010),
  461--498.

\end{thebibliography}

\providecommand{\etalchar}[1]{$^{#1}$}
\providecommand{\bysame}{\leavevmode\hbox to3em{\hrulefill}\thinspace}
\providecommand{\noopsort}[1]{}
\providecommand{\mr}[1]{\href{http://www.ams.org/mathscinet-getitem?mr=#1}{MR~#1}}
\providecommand{\zbl}[1]{\href{http://www.zentralblatt-math.org/zmath/en/search/?q=an:#1}{Zbl~#1}}
\providecommand{\jfm}[1]{\href{http://www.emis.de/cgi-bin/JFM-item?#1}{JFM~#1}}
\providecommand{\arxiv}[1]{\href{http://www.arxiv.org/abs/#1}{arXiv~#1}}
\providecommand{\doi}[1]{\url{http://dx.doi.org/#1}}
\providecommand{\MR}{\relax\ifhmode\unskip\space\fi MR }
\providecommand{\MRhref}[2]{%
  \href{http://www.ams.org/mathscinet-getitem?mr=#1}{#2}
}
\providecommand{\href}[2]{#2}

\end{document}